\documentclass[a4paper,11pt]{article}
\usepackage[a4paper, margin={1.05in, 1.05in}]{geometry}
\usepackage{amsmath,amsthm,amssymb,bm}
\usepackage{amssymb}
\usepackage{latexsym}
\usepackage[pdftex]{graphicx}
\usepackage{enumerate}
\usepackage{color}
\usepackage{soul}
\usepackage{caption}
\usepackage{multirow}

\title{Semi-random graph process}

\author{
	Omri Ben-Eliezer\thanks{Blavatnik School of Computer Science, Raymond and Beverly Sackler Faculty of Exact Sciences, Tel Aviv University, Tel Aviv, 6997801, Israel. Email: omrib@mail.tau.ac.il}
	\and Dan Hefetz\thanks{Department of Computer Science, Ariel University, Ariel 40700, Israel. Email: danhe@ariel.ac.il. Research supported by ISF grant 822/18.}
	\and Gal Kronenberg\thanks{School of Mathematical Sciences, Raymond and Beverly Sackler Faculty of Exact Sciences, Tel Aviv University, Tel Aviv, 6997801, Israel. Email: galkrone@mail.tau.ac.il.}
	\and Olaf Parczyk\thanks{Institut f\"ur Mathematik, Technische Universit\"at Ilmenau, 98684 Ilmenau, Germany. Email: olaf.parczyk@tu-ilmenau.de. Research supported by DFG grant PE 2299/1-1.}
	\and Clara Shikhelman\thanks{School of Mathematical Sciences, Raymond and Beverly Sackler Faculty of Exact Sciences, Tel Aviv University, Tel Aviv, 6997801, Israel. Email: clarashk@mail.tau.ac.il.}
	\and Milo\v{s} Stojakovi\'c\thanks{Department of Mathematics and Informatics, Faculty of Sciences, University of Novi Sad, Trg D.~Obradovi\'ca 4, 21000 Novi Sad, Serbia. Email: milos.stojakovic@dmi.uns.ac.rs. Partly supported by the Ministry of Education, Science, and Technological Development, Republic of Serbia, and the Provincial Secretariat for Higher Education and Scientific Research, Province of Vojvodina.}
}


%

\theoremstyle{plain}
\newtheorem{theorem}{Theorem}[section]
\newtheorem{lemma}[theorem]{Lemma}
\newtheorem{claim}[theorem]{Claim}
\newtheorem{proposition}[theorem]{Proposition}
\newtheorem{observation}[theorem]{Observation}
\newtheorem{corollary}[theorem]{Corollary}
\newtheorem{conjecture}[theorem]{Conjecture}

\newtheorem{question}[theorem]{Question}

\newenvironment{remark}{\paragraph{\textbf{Remark.}}}{\medskip}

%
%

\newcommand\cp{{\mathcal{P}}}

\newcommand\ch{{\mathcal{H}}}
\newcommand\cd{{\mathcal{D}}}

\newcommand\lf{\left}
\newcommand\rt{\right}
\newcommand{\Bin}{\ensuremath{\textrm{Bin}}}

\def\E{\mathbb{E}}

\newcommand\off{{\text{off}}}



\begin{document}
	\date{}
	\maketitle

\begin{abstract}
We introduce and study a novel semi-random multigraph process, described as follows. The process starts with an empty graph on $n$ vertices. In every round of the process, one vertex $v$ of the graph is picked uniformly at random and independently of all previous rounds. We then choose an additional vertex (according to a strategy of our choice) and connect it by an edge to $v$. 

For various natural monotone increasing graph properties $\mathcal{P}$, we prove tight upper and lower bounds on the minimum (extended over the set of all possible strategies) number of rounds required by the process to obtain, with high probability, a graph that satisfies $\mathcal{P}$. Along the way, we show that the process is general enough to approximate (using suitable strategies) several well-studied random graph models.
\end{abstract}


\section{Introduction}
In this paper we introduce and analyze a general semi-random multigraph process, arising from an interplay between a sequence of random choices on the one hand, and a strategy of our choice (that may also involve randomness) on the other.
The process is defined as follows. We start with an empty graph on the vertex set $[n]$. In each round,  \textit{Builder} is offered a vertex $v$, chosen uniformly at random (u.a.r.\ for brevity) with replacement from the set $[n]$, independently of all previous choices. Builder then irrevocably chooses an additional vertex  $u$ and adds the edge $uv$ to his (multi)graph, with the possibility of creating multiple edges and loops. 

The (possibly randomized) algorithm that Builder uses in order to add edges throughout this process is called the \textit{strategy} of Builder.
As a special case, we also show how the process can be used to approximate (using suitable strategies) some well-known random graph models such as the Erd\H{o}s-Renyi random graph model (see~\cite{ER}), the random multigraph model (see~\cite{HKK}), the $k$-out model (see~\cite{walkup1980matchings}), and the min-degree process (see~\cite{Wormald}).  

Given a positive integer $n$ and a monotone increasing graph property $\mathcal P$, we consider the one-player game in which Builder's goal is to build a multigraph with vertex set $[n]$ satisfying $\mathcal P$ as quickly as possible; we denote this game by $(\cp, n)$. The general problem discussed in this paper is to determine the typical number of rounds Builder needs in order to construct such a multigraph under optimal play. We mostly focus on the \emph{online version} of the game, where in each round Builder is presented with the next random vertex only after he chose a vertex in the previous round and added the corresponding edge to his graph, but also consider the \emph{offline version}, in which Builder is given the entire sequence of random vertex choices before the game starts.

\paragraph*{A formal treatment.}
Suppose that Builder follows some fixed strategy $\mathcal S$. Let $\mathcal S(n,m)$ denote the resulting multigraph if Builder follows $\mathcal S$ for $m$ rounds. That is, $\mathcal S(n,m)$ is the probability space of all multigraphs with vertex set $[n]$ and with $m$ edges, where each of these edges is chosen as follows. First a vertex $v \in [n]$ is chosen u.a.r., and then another vertex $u$ is chosen, according to $\mathcal S$, and the edge $uv$ is added to the graph. Sometimes, when $n$ and $\mathcal S$ are clear from the context, we use $G_m$ to denote Builder's multigraph after $m$ rounds. 
For the online version of the game $(\cp, n)$, for $0 \leq p \leq 1$ and for a strategy $\mathcal{S}$, we define $\tau_p(\mathcal{S})$ to be the smallest integer $m$ for which $G_m \sim \mathcal S(n,m)$ satisfies $\cp$ with probability at least $p$. If no such integer $m$ exists, then we define $\tau_p(\mathcal{S})$ to be $+\infty$. Furthermore, we define $\tau_{p}(\mathcal{P}, n)$ to be the minimum of $\tau_p(\mathcal{S})$, taken over all possible strategies $\mathcal{S}$ for $(\mathcal{P}, n)$. In other words, $\tau_p(\cp, n)$ can be seen as the smallest number of rounds Builder needs in order to build a multigraph which satisfies property $\cp$ with probability (at least) $p$, provided he adopts a best possible strategy for this purpose.

For the offline version of the game $(\cp, n)$ we define $\tau'_p(\mathcal{S})$ and $\tau'_p(\cp, n)$ in an analogous manner. Note that $\tau'_p(\cp, n) \leq \tau_p(\cp, n)$ holds for every $\cp$, $n$ and $p$.

For a given monotone increasing graph property $\cp$, our prime objective for the online version of the game $(\cp, n)$ is to obtain tight upper bounds on $\tau_{1-o(1)}(\cp, n)$ and tight lower bounds on $\tau_{o(1)}(\cp, n)$, where $o(1)$ is a positive function tending to zero as $n$ tends to infinity. With some abuse of notation, we say that these bounds are, respectively, upper and lower bounds on $\tau(\cp, n)$ which hold with high probability (w.h.p.~for brevity). Note that in order to prove that w.h.p. $\tau(\cp,n) \leq m$, it suffices to present a strategy $\mathcal S$ such that w.h.p. $G_m \sim \mathcal S(n,m)$ satisfies $\cp$. On the other hand, in order to prove that w.h.p. $\tau(\cp,n) \geq m$, one has to show that for any strategy $\mathcal S$, w.h.p. the graph $G_m \sim \mathcal S(n,m)$ does not satisfy $\cp$. Our prime objective for the offline version of the game is analogous, namely, to obtain tight upper and lower bounds on $\tau'(\cp, n)$ which hold with high probability.

In this paper we will establish such lower and upper bounds on $\tau(\cp, n)$ and on $\tau'(\cp, n)$ for several natural graph properties $\cp$. 
In the next three subsections we describe the main contributions of this paper: In Subsection \ref{subsec:connections} we explain how our model is connected to other random graph models; Subsection \ref{subsec:offline_results} is dedicated to our results on the offline version; and finally, in  Subsection \ref{subsec:online_results} we state tight upper and lower bounds for several properties of interest in the online version. 

\subsection{Connections to other random graph models}
\label{subsec:connections}
The first few results we obtain in this paper show that, in some sense, our process generalizes several well-known random graph models. Namely, given a certain model of random graphs $\mathcal G$, we prove that there exists an appropriate choice of a strategy ${\mathcal S}_{\mathcal G}$ for Builder such that ${\mathcal S}_{\mathcal G}(n,m)$ can be coupled to $\mathcal G$. Such results have independent interest, but will also enable us to use these well-studied models to draw conclusions about our semi-random graph process.   


\medskip

\paragraph{\textbf{The random graph and multigraph models.}}
We first look at two classical random graph processes. The first process $\{G_m\}_{m=0}^{\binom{n}{2}}$ is defined as follows. Let $G_0$ be the empty graph with vertex set $[n] := \{1, \ldots, n\}$ and, for every $m \geq 0$, let $G_{m+1} = G_m \cup e$, where $e \in \binom{[n]}{2} \setminus E(G_m)$ is chosen u.a.r. It is well-known and easy to see that $G_m \sim G(n,m)$ for every $0 \leq m \leq \binom{n}{2}$, that is, this random graph process generates the Erd\H{o}s-Renyi random graph model~\cite{ER}. The second process we consider is the random multigraph process $\{M(n,m)\}_{m \geq 0}$ that was introduced in~\cite{HKK}. This process is similar to the first one except that, in each round, the edge we add is chosen u.a.r.~from $\binom{[n]}{2}$, allowing the graph to have multiple edges. For this reason, the process is also not limited in length. We prove that our semi-random graph process can be used to generate the second of these processes and to approximate the first.

\begin{proposition}\label{pro:Mnm}
	There exists a strategy $\mathcal S_{M}$ for Builder such that the probability space $\mathcal S_{M}(n,m)$ is the same as the probability space $M(n,m)$.

Furthermore,  there exists a strategy $\mathcal S_{G}$ for Builder such that if $m=o(n^2)$, then $H\sim G(n,m)$ and $H'\sim \mathcal S_{G}(n,(1+o(1))m)$ can be coupled
	in such a way that w.h.p.\ $H\subseteq H'$. Finally, if $m = o(n)$, then the strategy $\mathcal S_{G}$ is such that  $H\sim G(n,m)$ and $H'\sim \mathcal S_{G}(n,m)$ can be coupled in such a way that w.h.p.\ $H= H'$.
\end{proposition}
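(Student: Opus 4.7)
The plan is to use, for both parts, a single simple strategy: upon being offered the random vertex $v$, Builder selects $u$ uniformly at random from $[n]\setminus\{v\}$ and adds the edge $uv$. For the first part, I would verify by direct computation that the unordered edge produced in a single round is uniform on $\binom{[n]}{2}$: for any $i\neq j$, $\P[\{u,v\}=\{i,j\}]=\P[v=i,u=j]+\P[v=j,u=i]=2/(n(n-1))=1/\binom{n}{2}$. Since rounds are independent, the resulting sequence of edges is i.i.d.\ uniform on $\binom{[n]}{2}$, which is exactly the distribution $M(n,m)$. Thus $\mathcal S_M(n,m)=M(n,m)$ as probability spaces.

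For the second claim, I take the same strategy as $\mathcal S_G$ and run it for $M=(1+\varepsilon)m$ rounds, where $\varepsilon=\varepsilon(n)\to 0$ tends to zero slowly enough. The multigraph produced is then distributed as $M(n,M)$. The key step is to show that w.h.p.\ fewer than $o(m)$ of these $M$ edges are ``repeats'' (equal to an edge drawn in an earlier round). A union bound over rounds gives that the expected number of repeats is at most $\sum_{i=1}^{M}(i-1)/\binom{n}{2}=O(M^2/n^2)=o(m)$, using the assumption $m=o(n^2)$; Markov's inequality then yields that w.h.p.\ at least $m$ distinct edges appear among the $M$ rounds. Think of the edge sequence $U_1,\dots,U_M$ as the prefix of an infinite i.i.d.\ uniform sequence, and let $E_1,E_2,\dots$ enumerate the distinct edges in order of first appearance. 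A standard argument (at each fresh-edge step, the new edge is uniform over edges not drawn yet) shows that $(E_1,\dots,E_m)$ is uniform over ordered $m$-tuples of distinct edges of $\binom{[n]}{2}$; as an unordered set, $\{E_1,\dots,E_m\}\sim G(n,m)$. Declaring $H:=\{E_1,\dots,E_m\}$ and $H':=\mathcal S_G(n,M)$, the event $\{E_m\text{ appears within the first }M\text{ rounds}\}$ holds w.h.p.\ and on this event $H\subseteq H'$.

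For the final claim, with $m=o(n)$ the expected number of repeats in the same process is $\binom{m}{2}/\binom{n}{2}=O(m^2/n^2)=o(1)$, so w.h.p.\ the $m$ edges produced are all distinct; conditional on no repeats, the resulting graph is distributed as $G(n,m)$. This yields a coupling in which $H=H'$ on a probability $1-o(1)$ event (extend arbitrarily off this event to preserve marginals). The main obstacle here is not computational but conceptual: one must verify carefully that the first $m$ distinct edges in a sequence of i.i.d.\ uniform draws are distributed as the random graph process, so that a coupling between $G(n,m)$ and the multigraph produced by Builder is legitimate. Once this is in hand, both parts of the proposition follow from the elementary bound on the expected number of repeated samples.
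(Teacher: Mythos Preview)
Your proposal is correct and follows essentially the same approach as the paper: the same strategy (pick the second endpoint uniformly from $[n]\setminus\{v\}$), the same one-line computation showing each round yields a uniform edge from $\binom{[n]}{2}$, and the same expected-repeats bound $\binom{M}{2}/\binom{n}{2}=o(m)$ combined with Markov's inequality. Your treatment of the coupling (identifying $H$ with the first $m$ distinct edges in the i.i.d.\ sequence) is slightly more explicit than the paper's, which simply declares repeated rounds to be ``failures'' and notes there are $o(m)$ of them, but the mathematical content is identical.
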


The following two results are immediate corollaries of Proposition~\ref{pro:Mnm}.
\begin{corollary}\label{cor:Gnm}
Let $m_{\mathcal P}$ be a positive integer for which w.h.p.\ $H \sim G(n,m_{\mathcal P})$ satisfies the monotone increasing graph property $\mathcal P$. If $m_{\mathcal P} = o \left(n^2 \right)$, then w.h.p.\ $\tau(\mathcal P,n) \leq (1+o(1)) m_{\mathcal P}$.
\end{corollary}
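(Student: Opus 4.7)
The plan is to apply Proposition~\ref{pro:Mnm} directly, using the strategy $\mathcal S_G$ and the monotonicity of $\mathcal P$. Since $m_{\mathcal P} = o(n^2)$, the second part of Proposition~\ref{pro:Mnm} supplies a coupling of $H \sim G(n, m_{\mathcal P})$ and $H' \sim \mathcal S_G(n, (1+o(1)) m_{\mathcal P})$ under which w.h.p.\ $H \subseteq H'$. By the hypothesis, w.h.p.\ $H$ satisfies $\mathcal P$; since $\mathcal P$ is monotone increasing, a union bound over the two high-probability events yields that w.h.p.\ $H'$ also satisfies $\mathcal P$.

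Setting $m := \lceil (1+o(1)) m_{\mathcal P} \rceil$, this means that following the strategy $\mathcal S_G$ for $m$ rounds produces, with probability $1 - o(1)$, a multigraph on $[n]$ that satisfies $\mathcal P$. By the definition of $\tau_p$, this gives $\tau_{1-o(1)}(\mathcal S_G) \leq m$, and hence
\[
\tau_{1-o(1)}(\mathcal P, n) \;\leq\; \tau_{1-o(1)}(\mathcal S_G) \;\leq\; (1+o(1)) m_{\mathcal P},
\]
which is the claimed high-probability upper bound on $\tau(\mathcal P, n)$.

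There is no real obstacle here: the entire content is in the coupling provided by Proposition~\ref{pro:Mnm}, combined with the fact that monotone increasing properties are preserved under taking supergraphs. The only small care needed is to make sure the $o(1)$ terms are interpreted consistently — the $o(1)$ in the statement of the corollary is the same flavor as the $o(1)$ appearing in Proposition~\ref{pro:Mnm}, and the union bound over the two rare ``bad'' events (the coupling failure and the event that $H$ does not satisfy $\mathcal P$) still leaves a probability of success tending to $1$.
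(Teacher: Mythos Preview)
Your proof is correct and is exactly the argument the paper has in mind: the paper simply states that Corollary~\ref{cor:Gnm} is an immediate corollary of Proposition~\ref{pro:Mnm}, and your write-up unpacks precisely that immediate deduction via the coupling and monotonicity.
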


\begin{corollary}\label{cor:Mnm}
Let $m_{\mathcal P}$ be a positive integer for which w.h.p.\ $M \sim M(n, m_{\mathcal P})$ satisfies the monotone increasing graph property $\mathcal P$. Then w.h.p.\ $\tau(\mathcal P,n) \leq m_{\mathcal P}$.
\end{corollary}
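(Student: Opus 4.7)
The plan is essentially immediate from Proposition~\ref{pro:Mnm}, since the first part of that proposition promises a strategy $\mathcal{S}_M$ whose output distribution after $m$ rounds coincides \emph{exactly} (not merely up to a coupling with $o(1)$ error) with the random multigraph $M(n,m)$. So I would simply instantiate Builder with this strategy and run it for $m_{\mathcal{P}}$ rounds.

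More concretely, let $\mathcal{S}_M$ be the strategy from Proposition~\ref{pro:Mnm}, and let $G_{m_{\mathcal{P}}} \sim \mathcal{S}_M(n, m_{\mathcal{P}})$ be the multigraph Builder produces after $m_{\mathcal{P}}$ rounds. By Proposition~\ref{pro:Mnm}, the probability spaces $\mathcal{S}_M(n, m_{\mathcal{P}})$ and $M(n, m_{\mathcal{P}})$ are identical; by the hypothesis of the corollary, a sample from $M(n, m_{\mathcal{P}})$ satisfies $\mathcal{P}$ with probability $1 - o(1)$. Therefore $G_{m_{\mathcal{P}}}$ satisfies $\mathcal{P}$ with probability $1 - o(1)$, which by the definition of $\tau_p(\mathcal{S}_M)$ gives $\tau_{1-o(1)}(\mathcal{S}_M) \leq m_{\mathcal{P}}$. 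Taking the infimum over strategies yields $\tau_{1-o(1)}(\mathcal{P}, n) \leq m_{\mathcal{P}}$, i.e.\ w.h.p.\ $\tau(\mathcal{P}, n) \leq m_{\mathcal{P}}$.

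There is no real obstacle here, and in particular monotonicity of $\mathcal{P}$ is not needed for this direction: the entire content of the corollary is packaged into Proposition~\ref{pro:Mnm}, and the argument is a one-line application of the definitions. (The analogous corollary for $G(n,m)$ is slightly more delicate because the coupling in Proposition~\ref{pro:Mnm} is only into a graph with $(1+o(1))m$ edges and only guarantees $H \subseteq H'$, which is precisely where monotonicity of $\mathcal{P}$ is invoked; no such issue arises here.)
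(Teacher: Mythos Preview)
Your proof is correct and matches the paper's approach exactly: the paper states that this corollary is an immediate consequence of Proposition~\ref{pro:Mnm} and gives no further argument, which is precisely what you have spelled out.
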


\paragraph{\textbf{The $k$-out model.}} 

Given a graph $G$ with minimum degree $\delta$ and a positive integer $k \leq \delta$, let $\mathcal G_{k\text{-out}}(G)$ denote the probability space of subgraphs $H$ of $G$ obtained via the following procedure: each vertex $v \in V(G)$ chooses $k$ out-neighbors uniformly at random among its neighbors in $G$ to create a digraph $D$; then, $H$ is obtained by ignoring orientations in $D$ and replacing multiple edges with single edges. We abbreviate $\mathcal G_{k\text{-out}}(K_n)$ under $\mathcal G_{k\text{-out}}(n)$. This model first appeared in ``The Scottish Book"~\cite{Scottish} and was also introduced by Walkup in 1980~\cite{walkup1980matchings}, where he proved that for every sufficiently large integer $n$, a graph $H \sim \mathcal G_{2\text{-out}}(K_{n,n})$ typically admits a perfect matching.  

The following result asserts that a typical $G \sim \mathcal G_{k\text{-out}}(n)$ can be approximated using our semi-random graph process (another related result will be discussed in Subsection~\ref{subsec::kout}).


\begin{proposition}\label{pro:Kout}
Fix a positive integer $k$. There exists a strategy $\mathcal S_{out}$ for Builder such that  $H \sim \mathcal G_{k\text{-out}}(n)$ and $G \sim \mathcal S_{out}(n, k n + o(n))$ can be coupled in such a way that w.h.p.\ $H \subseteq G$.
\end{proposition}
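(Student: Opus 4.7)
My plan is to design a strategy that builds, as a subgraph of the multigraph $G$ it produces, an exact copy of a $\mathcal G_{k\text{-out}}(n)$-random graph. Fix once and for all a sequence $u_1, u_2, \ldots$ in which each vertex of $[n]$ appears exactly $k$ times. Builder maintains a counter $g$ of \emph{successful} rounds (initially $0$) and, for each $w \in [n]$, a set $N_w$ of out-neighbors already assigned to $w$ (initially empty). At every round, Builder's target is $u_{g+1}$ (a function of past history only): once the random vertex $v$ is revealed, the edge $u_{g+1} v$ is always added to $G$; additionally, if $v \neq u_{g+1}$ and $v \notin N_{u_{g+1}}$, then the round is called \emph{successful}, $v$ is inserted into $N_{u_{g+1}}$, and $g$ is incremented.

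Define the coupled $H$ to be the simple graph whose edges are those added on successful rounds. Conditional on the past history, the pair $(u_{g+1}, N_{u_{g+1}})$ is determined while $v$ is uniform on $[n]$ and independent of the past; hence, conditional further on the round being successful, $v$ is uniform on $[n] \setminus (\{u_{g+1}\} \cup N_{u_{g+1}})$. A straightforward induction on the number of completed successful rounds yields that, once Builder has completed $k n$ successful rounds, the sets $(N_w)_{w \in [n]}$ are mutually independent uniformly random $k$-subsets of $[n] \setminus \{w\}$. Hence $H \sim \mathcal G_{k\text{-out}}(n)$ and by construction $H \subseteq G$, so it only remains to verify that $m := k n + f(n)$ rounds suffice to produce $k n$ successful rounds with high probability, for some $f(n) = o(n)$.

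Conditional on the past, the probability that the current round \emph{fails} is $(1 + |N_{u_{g+1}}|)/n \le (k+1)/n$, so the number $B$ of failed rounds after $m$ rounds is stochastically dominated by $\Bin(m, (k+1)/n)$, whose mean is $k(k+1) + o(1)$. A Chernoff bound then gives $B \le k(k+1) + O(\sqrt{\log n})$ with high probability; taking, say, $f(n) := \log n$ makes $f(n) \ge B$ with high probability, thus guaranteeing at least $k n$ successful rounds within $m = k n + o(n)$ steps. The only delicate point is the conditioning argument for the marginal distribution of $H$, since $g$, the target, and $N_{u_{g+1}}$ are all random functions of the history; once this is set up carefully (e.g.\ by induction on the number of successful rounds so far), the tail bound on $B$ is a routine concentration estimate.
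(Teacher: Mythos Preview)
Your proof is correct and takes a genuinely different, cleaner route than the paper's. The paper introduces an auxiliary model $\mathcal G'_{(k,r)\text{-out}}(n)$ in which each vertex samples $k$ out-neighbours \emph{with} replacement and, upon a collision or loop, draws $r$ extra samples; its strategy allots a fixed block of $k$ (or $k+r$) rounds to each vertex in turn, and the argument proceeds via three separate claims: (i) $r=2$ extra rounds w.h.p.\ suffice to guarantee $k$ distinct neighbours in every block, (ii) the number of blocks needing extras is $o(n)$, and (iii) $\mathcal G_{k\text{-out}}(n)$ can be coupled inside $\mathcal G'_{(k,2)\text{-out}}(n)$. You replace all of this with plain rejection sampling: retry the current target until the offered vertex is new, which directly yields a uniform $k$-subset of $[n]\setminus\{w\}$ for each $w$ (and mutual independence, since the successful rounds for distinct targets form disjoint groups whose conditional laws do not interact), and then a single Chernoff bound controls the total number of rejections. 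Your argument is shorter and avoids the intermediate model entirely; the paper's version, by contrast, makes the block structure explicit and isolates $\mathcal G'_{(k,r)\text{-out}}$ as a model in its own right. One small point worth adding: your description of $\mathcal S_{out}$ only covers rounds with $g<kn$, so you should say Builder plays arbitrarily once $g=kn$, to make the strategy well-defined for all $m$ rounds.
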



The following result is an immediate corollary of Proposition~\ref{pro:Kout}.
\begin{corollary} \label{cor:Kout}
	Let $k_{\mathcal P}$ be a positive integer for which w.h.p.\ $H \sim \mathcal G_{k_{\mathcal P}\text{-out}}(n)$ satisfies the monotone increasing graph property $\mathcal P$. Then w.h.p.\ $\tau(\mathcal P,n) \leq (k_{\mathcal P} + o(1)) n$.
\end{corollary}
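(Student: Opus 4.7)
The plan is to simply invoke Proposition~\ref{pro:Kout} with $k = k_{\mathcal P}$ and exploit the monotonicity of $\mathcal P$. By the proposition there exists a strategy $\mathcal S_{out}$ and a coupling of $H \sim \mathcal G_{k_{\mathcal P}\text{-out}}(n)$ with $G \sim \mathcal S_{out}(n, k_{\mathcal P} n + o(n))$ such that $H \subseteq G$ w.h.p. By the hypothesis of the corollary, $H$ satisfies $\mathcal P$ w.h.p.

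I would next combine these two w.h.p.\ events by a union bound: with probability $1 - o(1)$ both $H \in \mathcal P$ and $H \subseteq G$ hold simultaneously. Since $\mathcal P$ is monotone increasing, the inclusion $H \subseteq G$ transfers the property to $G$, and therefore $G \sim \mathcal S_{out}(n, k_{\mathcal P} n + o(n))$ satisfies $\mathcal P$ w.h.p.

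Finally, recalling the definition of $\tau$, the existence of a strategy (namely $\mathcal S_{out}$) that produces a graph satisfying $\mathcal P$ with probability $1 - o(1)$ in $k_{\mathcal P} n + o(n) = (k_{\mathcal P} + o(1))n$ rounds yields $\tau(\mathcal P, n) \leq (k_{\mathcal P} + o(1))n$ w.h.p., which is exactly the statement to be proved. There is no real obstacle here; the only minor bookkeeping is that the $o(n)$ error from the proposition must be absorbed into the $o(1)$ factor in the conclusion, which is immediate.
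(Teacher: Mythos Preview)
Your proposal is correct and is precisely the argument the paper has in mind; indeed, the paper does not write out a proof at all, stating only that the result is an immediate corollary of Proposition~\ref{pro:Kout}. Your unpacking via the coupling, the union bound, and monotonicity is exactly the intended one-line justification.
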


Corollary~\ref{cor:Kout} has several consequences. For example, it implies an upper bound on the duration of the online Hamiltonicity game and the perfect matching game. This is further discussed in Subsection~\ref{subsec::kout} and Section~\ref{sec::openprob}.





\medskip

\paragraph{\textbf{A min-degree process.}} The graph process $\{G_{\min}(n,m)\}_{m \geq 0}$, introduced 
by Wormald in 1995~\cite{Wormald}, is defined as follows. Let $G_{\min}(n, 0)$ be the empty graph with vertex set $[n]$ and, for every $m \geq 0$, let $G_{\min}(n, m+1)$ be obtained from $G_{\min}(n,m)$ by first choosing a vertex $u$ of minimum degree in $G_{\min}(n,m)$ u.a.r., and then connecting it by a new edge to a vertex $v \in [n] \setminus \{u\}$ chosen u.a.r.~among all vertices which are not connected to $u$ by an edge of $G_{\min}(n,m)$. For more about this process (and other related processes), see, e.g.,~\cite{KKRL, KS, Wormald}. 

We show that this process can be approximated using the semi-random graph process.

\begin{proposition}\label{pro:Gmin}
	If $m = o \left(n^2 \right)$, then there exists a strategy $\mathcal S_{min}$ for Builder such that  $H \sim G_{\min}(n,m)$ and $G \sim \mathcal S_{min}(n, (1+o(1))m)$ can be coupled
	in such a way that w.h.p.\ $H \subseteq G$.
\end{proposition}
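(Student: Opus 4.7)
The plan is to have Builder simulate the min-degree process internally, using the random stream of offered vertices as a source of uniform random non-neighbors via rejection sampling. Formally, Builder maintains an auxiliary simple graph $H$ on $[n]$ (initially empty) together with a single ``pending'' vertex $u \in [n]$. Whenever there is no pending vertex, Builder privately samples $u$ uniformly from the current min-degree set $D(H) = \{w : d_H(w) = \delta(H)\}$. Upon receiving the offered random vertex $v$, if $v \neq u$ and $v \notin N_H(u)$ he adds the edge $uv$ to both $H$ and $G$ and clears the pending vertex; otherwise he records a dummy edge incident to $v$ in $G$ (say, the loop $vv$) and keeps $u$ pending. Since $m = o(n^2) < \binom{n}{2}$, any $u \in D(H)$ has a non-neighbor throughout, so the strategy is well defined.

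The first step is to verify the distribution of $H$. Conditioned on the private choice of $u \in D(H)$, each offered vertex is independently uniform on $[n]$, so by the memoryless property the first offered $v$ satisfying $v \neq u$ and $v \notin N_H(u)$ is uniformly distributed on $[n] \setminus (\{u\} \cup N_H(u))$. Hence each successful round adds to $H$ an edge $uv$ with $u$ uniform in $D(H)$ and $v$ uniform in its $H$-non-neighbors, matching exactly one step of $G_{\min}(n, \cdot)$. After $m$ successful rounds, $H$ is distributed as $G_{\min}(n,m)$ and $H \subseteq G$ by construction.

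The second step is to bound the total number of rounds $T$ needed to accumulate $m$ successful rounds. Since $H$ has at most $m$ edges throughout, $\delta(H) \leq 2m/n$, so any pending $u$ has $H$-degree at most $2m/n$ and the conditional success probability per round is at least $1 - (2m/n + 1)/n =: 1 - \varepsilon$, with $\varepsilon = o(1)$ precisely because $m = o(n^2)$. Consequently $T$ is stochastically dominated by a sum of $m$ i.i.d.\ geometric random variables with success probability $1-\varepsilon$, whose mean is $m/(1-\varepsilon) = (1+o(1))m$.

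The main obstacle is to upgrade this mean bound to a high-probability bound uniformly over all $m = o(n^2)$, which is where I would invoke a standard Chernoff-type tail inequality for sums of geometric random variables (e.g.\ Janson's bound): since $1-\varepsilon$ is bounded away from $0$ for large $n$, setting $\delta = 1/\log m$ (or any slowly decaying function of $m$) yields $\P(T > (1+\delta)m) = o(1)$. Running the strategy for $r = (1+\delta)m = (1+o(1))m$ rounds therefore gives the required coupling: with high probability all $m$ min-degree steps are completed within $r$ rounds, producing $H \sim G_{\min}(n,m)$ with $H \subseteq G \sim \mathcal{S}_{\min}(n, r)$.
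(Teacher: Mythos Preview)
Your proof is correct and follows essentially the same rejection-sampling idea as the paper: Builder supplies the min-degree vertex while the offered uniform vertex plays the role of the random partner, with rounds that would create a loop or repeat edge treated as failures. The only notable difference is that the paper bounds the number of failed rounds by the simpler route of computing their expectation (at most $\sum_{r\le m}(1/n + 2r/n^2) = o(m)$) and applying Markov's inequality, rather than invoking a Chernoff-type bound for sums of geometrics.
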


The next result is an immediate corollary of Proposition~\ref{pro:Gmin}.

\begin{corollary}\label{cor:Gmin}
	Let $m_{\mathcal P}$ be a positive integer for which w.h.p.\ $H \sim G_{min}(n,m_{\mathcal P})$ satisfies the monotone increasing graph property $\mathcal P$. If $m_{\mathcal P} = o \left(n^2 \right)$, then w.h.p.\ $\tau(\mathcal P, n) \leq (1+o(1)) m_{\mathcal P}$.
\end{corollary}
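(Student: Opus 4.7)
The plan is to construct a strategy $\mathcal{S}_{min}$ that maintains a ``shadow'' simple graph $H$ on vertex set $[n]$ inside Builder's multigraph $G$, in such a way that the edges of $H$ evolve according to the $G_{\min}(n,\cdot)$ process while the extra edges of $G$ are harmless padding. Specifically, at each round, upon receiving the random vertex $v$, Builder picks $u$ independently and uniformly at random from the set $M(H)$ of minimum-degree vertices of $H$; he then adds the edge $uv$ to $G$, and also to $H$ provided $v \notin N_H(u) \cup \{u\}$ (we call such a round \emph{successful}). Otherwise the round is \emph{wasted}: the edge $uv$ is still added to $G$ (as a possible loop or multi-edge), but $H$ is unchanged.

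The key observation is that, conditional on the current state of $H$, whenever a round is successful the new edge $uv$ added to $H$ has exactly the distribution prescribed by one step of the $G_{\min}$ process: $u$ is uniform on $M(H)$, and given $u$, the conditional distribution of $v$ is uniform on $[n] \setminus (N_H(u) \cup \{u\})$. One can therefore couple $H$ with a genuine $G_{\min}(n,\cdot)$ instance step by step, so that after $k$ successful rounds $H$ agrees with the $G_{\min}(n,k)$ graph in the coupling. The inclusion $H \subseteq G$ is immediate from the construction.

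It remains to bound the number of wasted rounds. The probability that round $j$ is wasted, given the current state of $H$, is at most $(\Delta(H) + 1)/n$. By the coupling, $H$ is at every intermediate point distributed as $G_{\min}(n,k)$ for some $k \le m$, so it suffices to prove $\Delta(G_{\min}(n,k)) = o(n)$ w.h.p., uniformly for $k \le m = o(n^2)$. Since the average degree of $G_{\min}(n,k)$ is $2k/n = o(n)$ and the min-degree process keeps the degree sequence tightly balanced, a standard concentration argument --- tracking, for each fixed vertex $w$, the number of times it is chosen either as the min-degree vertex or as the uniformly random non-neighbor (each event being a bounded increment) --- yields $\Delta(G_{\min}(n,k)) \le 2k/n + o(n) = o(n)$ w.h.p.

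Given this bound, each round has waste probability $o(1)$ conditional on the history, so an Azuma/Chernoff-type bound applied to the indicator sequence of wasted rounds shows that the total number of wasted rounds among the first $(1+o(1))m$ rounds is $o(m)$ w.h.p. Consequently after $(1+o(1))m$ rounds at least $m$ are successful, the shadow graph $H$ has the distribution of $G_{\min}(n,m)$, and $H \subseteq G$, as required. The main technical obstacle is establishing the uniform max-degree bound for $G_{\min}(n,k)$: the evolving minimum-degree set introduces dependencies that must be handled carefully, but standard martingale concentration tools suffice.
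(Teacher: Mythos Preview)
Your strategy and coupling are essentially the paper's proof of Proposition~\ref{pro:Gmin} (from which the corollary is immediate): maintain a simple shadow graph $H$ inside $G$, connect each offered vertex to a uniformly random minimum-degree vertex of $H$, declare the round a failure if this creates a loop or repeated edge, and observe that the successful rounds generate exactly the $G_{\min}$ process.

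The one place you diverge is in bounding the failure probability. You bound it by $(\Delta(H)+1)/n$ and then set out to prove $\Delta(G_{\min}(n,k)) = o(n)$ via martingale concentration. This works, but is an unnecessary detour: the vertex $u$ you pick is by construction a \emph{minimum}-degree vertex of $H$, so the failure probability in a round where $H$ has $r$ edges is exactly $(\delta(H)+1)/n \le (2r/n + 1)/n$, using nothing more than ``minimum degree $\le$ average degree''. The paper then sums
\[
\mathbb{E}(\text{failures}) \;\le\; \sum_{r=0}^{m-1}\frac{1+2r/n}{n} \;=\; O\!\left(\frac{m}{n}+\frac{m^2}{n^2}\right) \;=\; o(m)
\]
and finishes with Markov's inequality. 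Both the max-degree estimate and the Azuma step you propose for the failure count are avoidable.
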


In order to prove Proposition~\ref{pro:Gmin} we use two other min-degree processes (which we also approximate using the semi-random graph process), presented in Subsection~\ref{subsec:min_deg_process}.


\medskip

Corollaries~\ref{cor:Gnm}, \ref{cor:Mnm}, \ref{cor:Kout} and~\ref{cor:Gmin} show that building a graph which satisfies some monotone increasing graph property $\mathcal P$ in our semi-random graph process is at least as fast as building such a graph in various other well-known graph processes. As we will see in the next two subsections (see Theorems~\ref{thm:OffFix} and~\ref{th::fixedGraphUpperBound}), for some properties the semi-random graph process is much faster than all of these other processes.

\subsection{Offline games}
\label{subsec:offline_results}
Here we state the upper and lower bounds on $\tau'(\mathcal{P}, n)$ that we obtain for various natural graph properties $\mathcal{P}$. Given a fixed graph $H$, let $\mathcal{P}_H$ denote the property of containing $H$ as a subgraph. Our next result determines the order of magnitude of $\tau'(\mathcal{P}_H, n)$ for every fixed graph $H$.      

\begin{theorem}\label{thm:OffFix}
	Let $H$ be a fixed graph and let $r = \min \{\Delta^+(D) : D \textrm{ is an orientation of } H\}$. Let $f, g : \mathbb{N} \to \mathbb{R}$ be functions such that $f(n)$ tends to zero arbitrarily slowly as $n$ tends to infinity and $g(n)$ tends to infinity arbitrarily slowly as $n$ tends to infinity. Then w.h.p. $f(n) \cdot n^{(r-1)/r} \leq \tau'(\mathcal{P}_H, n) \leq g(n) \cdot n^{(r-1)/r}$. 	
\end{theorem}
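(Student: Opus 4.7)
My plan is to handle the upper and lower bounds separately, with a common ingredient: I orient each edge of Builder's multigraph from its random endpoint (the one Builder is offered) to its chosen endpoint. In this oriented multigraph, the out-degree of any vertex $v$ equals the number of rounds in which $v$ was the random vertex, and, by definition of $r$, every orientation of $H$ has maximum out-degree at least $r$.

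For the upper bound with $m = g(n)\, n^{(r-1)/r}$, I fix an orientation $D$ of $H$ achieving $\Delta^+(D) = r$ and set $h := |V(H)|$. Builder's offline strategy is: having inspected the entire random sequence, he locates $h$ distinct vertices $w_1, \ldots, w_h \in [n]$ each of which appears at least $r$ times, fixes an arbitrary bijection $\phi : V(H) \to \{w_1, \ldots, w_h\}$, and for every edge $v_i \to v_j$ of $D$ earmarks a distinct appearance of $\phi(v_i)$ in the sequence at which to respond with $\phi(v_j)$. This is feasible because $\phi(v_i)$ appears at least $r \geq d_D^+(v_i)$ times; the remaining rounds are played arbitrarily. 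The resulting multigraph contains a copy of $H$ via $\phi$. To verify that the required $w_1, \ldots, w_h$ typically exist, let $X$ count the number of vertices appearing at least $r$ times in the random sequence; a direct computation gives $\mathbb{E}[X] = (1+o(1))\, m^r/(r!\, n^{r-1}) = g(n)^r/r! \to \infty$. Since the load vector is multinomial, the indicators summing to $X$ are pairwise negatively correlated, so $\mathrm{Var}(X) \leq \mathbb{E}[X]$, and Chebyshev's inequality then gives $X \geq h$ w.h.p.

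For the lower bound, I fix any strategy $\mathcal{S}$ and set $m = f(n)\, n^{(r-1)/r}$. If $G_m \sim \mathcal{S}(n,m)$ contains a copy of $H$, then restricting the orientation described above to this copy produces an orientation of $H$ with maximum out-degree at least $r$; hence some vertex of $[n]$ is randomly picked at least $r$ times during the process. However, the expected number of such vertices is at most $n \binom{m}{r}/n^r = O(f(n)^r) = o(1)$, so by Markov's inequality, w.h.p.~no vertex is picked $r$ times. Since this event depends only on the random sequence and not on $\mathcal{S}$, we conclude that w.h.p.~$G_m$ does not contain $H$, irrespective of the strategy.

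The main obstacle I anticipate lies in the upper bound: passing from $\mathbb{E}[X] \to \infty$ to $X \geq h$ w.h.p.~requires a second-moment input, which I plan to obtain via the negative association of multinomial components (alternatively, one could verify directly that $\mathrm{Cov}(\mathbf{1}_v, \mathbf{1}_w) \leq 0$ for distinct $v, w \in [n]$). The lower bound, by contrast, is quite clean: the random endpoints of Builder's edges are outside his control, so the orientation trick forces every copy of $H$ to contain a vertex that is a load outlier in the random sequence, and such outliers are rare in the low-density regime.
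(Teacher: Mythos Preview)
Your argument is correct and follows essentially the same route as the paper: both proofs reduce the question to whether the random sequence contains $|V(H)|$ vertices of multiplicity at least $r$, use Markov for the lower bound, and a concentration estimate for the upper bound. The only difference is the concentration tool for the upper bound: the paper invokes a Talagrand-type inequality (after first arguing that no vertex has multiplicity exceeding $r$), whereas your use of negative association of multinomial loads plus Chebyshev is more elementary and avoids that auxiliary step.
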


Given a positive integer $k$, let ${\cd}_k$ denote the property of having minimum degree at least $k$. Our next result determines $\tau'({\cd}_k, n)$ asymptotically for every fixed positive integer $k$.

\begin{theorem} \label{th::MinDegkOffline}
	Let $k$ be a positive integer and let $\alpha_k$ be the unique positive root of $f_k(x) := \sum_{i=0}^{k-1} \left(k-i \right) \frac{x^i}{i!} - x e^x$. Then w.h.p. $\tau'({\cd}_k, n) = (\alpha_k + o(1)) n$.
\end{theorem}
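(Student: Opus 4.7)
The plan is to reformulate the offline game as a balls-in-bins feasibility question and then locate the threshold via a Poisson-type concentration argument. Let $v_1, \ldots, v_m$ denote the random offer sequence and set $X_i = |\{t : v_t = i\}|$ for $i \in [n]$, so that $(X_1, \ldots, X_n)$ is multinomial with parameters $(m; 1/n, \ldots, 1/n)$. In each round Builder picks some $u_t \in [n]$; writing $Y_i = |\{t : u_t = i\}|$, a short case analysis over loops and non-loops shows that $\deg_{G_m}(i) = X_i + Y_i$ in all cases. Since the $Y_i$'s are unconstrained nonnegative integers summing to $m$, and Builder knows $v_1, \ldots, v_m$ in advance, Builder wins after $m$ rounds if and only if
\[ \sum_{i \in [n]} \max\{k - X_i,\, 0\} \;\leq\; m. \]
The game thus reduces to computing the threshold for this inequality under the multinomial law of $(X_i)$.

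Set $m = \alpha n$ with $\alpha$ fixed. Each $X_i \sim \Bin(m, 1/n)$ converges in distribution to $\mathrm{Poisson}(\alpha)$, and since the relevant integrand is bounded by $k$, this lifts to expectations:
\[ \E\bigl[\max\{k - X_i,\, 0\}\bigr] \;=\; \phi_k(\alpha) + o(1), \qquad \phi_k(\alpha) \;:=\; \sum_{j=0}^{k-1}(k-j)\frac{\alpha^j e^{-\alpha}}{j!}. \]
Multiplying the equation $\phi_k(\alpha) = \alpha$ through by $e^{\alpha}$ gives exactly $f_k(\alpha) = 0$, so the equations $\phi_k(\alpha) = \alpha$ and $f_k(\alpha) = 0$ share positive roots. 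Termwise differentiation, using $(\alpha^j e^{-\alpha}/j!)' = \alpha^{j-1} e^{-\alpha}/(j-1)! - \alpha^j e^{-\alpha}/j!$, yields $\phi_k'(\alpha) = -\P(\mathrm{Poisson}(\alpha) \leq k-1) < 0$. Hence $\alpha \mapsto \alpha - \phi_k(\alpha)$ is strictly increasing from $-k$ at $\alpha = 0$ to $+\infty$, giving a unique positive root $\alpha_k$.

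For concentration, observe that changing a single $v_t$ perturbs exactly two of the $X_i$'s by $\pm 1$, hence alters $\sum_i \max\{k - X_i, 0\}$ by at most $2$. Azuma-Hoeffding applied to the exposure martingale over the independent draws $v_1, \ldots, v_m$ (with $m = O(n)$) therefore gives
\[ \P\!\left(\,\biggl|\sum_i \max\{k - X_i,\, 0\} \,-\, n\phi_k(\alpha)\biggr| \geq \varepsilon n\,\right) \;\leq\; 2\exp\bigl(-\Omega(\varepsilon^2 n)\bigr) \;=\; o(1). \]
Combining with the previous paragraph: for any fixed $\varepsilon > 0$, if $m = (\alpha_k + \varepsilon) n$ then w.h.p.\ the feasibility inequality holds and Builder wins, while if $m = (\alpha_k - \varepsilon) n$ then w.h.p.\ it fails for every strategy. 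This yields $\tau'(\cd_k, n) = (\alpha_k + o(1)) n$ as claimed.

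The main subtle point is ensuring that the $o(1)$ error in $\E[\max\{k - X_i, 0\}] = \phi_k(\alpha) + o(1)$ is controlled uniformly enough to give an $o(n)$ error after summing. This follows from the explicit pointwise bound $|\P(\Bin(m, 1/n) = j) - e^{-\alpha}\alpha^j/j!| = O(1/n)$ for each fixed $j \leq k-1$ when $m = \alpha n$, so no heavier machinery is needed beyond a slight separation $\pm \varepsilon$ around $\alpha_k$ and the strict monotonicity of $\alpha - \phi_k(\alpha)$ established above.
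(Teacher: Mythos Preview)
Your approach and the paper's are the same at the top level: both reduce the offline game to the balls-in-bins feasibility inequality $\sum_{i}\max\{k-X_i,0\}\le m$ (which is exactly the paper's condition $Y_k^r\le r$, written differently), and then locate the threshold $\alpha_k$ by Poisson approximation plus an Azuma/bounded-difference concentration. Your derivation of $\phi_k$, its monotonicity via $\phi_k'(\alpha)=-\Pr(\mathrm{Poisson}(\alpha)\le k-1)$, and the identification with the root of $f_k$ are all correct and in fact cleaner than the paper's phrasing.

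There is, however, one substantive gap in the sufficiency direction. You assert that Builder wins whenever the feasibility inequality holds, arguing only that the vector $(Y_i)$ of chosen-endpoint counts can be any nonnegative integer sequence summing to $m$ and that $\deg_{G_m}(i)=X_i+Y_i$. That identity is (essentially) the multigraph degree, and with that reading your equivalence is immediate. But the paper treats $\mathcal D_k$ as the \emph{simple} minimum-degree property (see the distinction drawn at the start of Section~5.2, and note that the paper's own proof insists on claiming only ``free'' edges). For simple degree, merely knowing that suitable aggregate counts $(Y_i)$ exist is not enough: Builder must actually route the choices so that each deficient vertex $i$ is selected as $u_t$ in rounds with pairwise distinct offers $v_t\ne i$; otherwise the repeated edge contributes nothing new. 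This routing problem is where all the work in the paper's proof lies --- it constructs a cascade of bipartite matchings $M_1,\ldots,M_k$ between offer-slots and deficient vertices and verifies Hall's condition at each stage (using that the maximum offer count is $O(\log n)$).

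So as written, your argument proves both bounds for the multigraph version and only the lower bound for the simple-degree version. The fix is not difficult --- at $m=(\alpha_k+\varepsilon)n$ there is $\Theta(n)$ slack in the inequality and w.h.p.\ no vertex is offered more than $O(\log n)$ times, so a Hall-type argument (or the paper's) goes through --- but it is the main technical step of the proof and is missing from your write-up.
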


\begin{remark}
	It is not hard to see that, for every $k$, indeed $f_k(x)$ has exactly one positive root. Moreover, $\lim_{k \rightarrow \infty} \alpha_k = k/2$, that is, in the offline version, when $k$ is sufficiently large, Builder has a strategy to build a graph with minimum degree $k$ in $kn/2 + o(n)$ rounds.
\end{remark}

\subsection{Online games}
\label{subsec:online_results}

In this subsection we state the upper and lower bounds on $\tau(\mathcal{P}, n)$ that we obtain for various natural graph properties $\mathcal{P}$.

\begin{theorem} \label{th::fixedGraphUpperBound}
%
	Let $H$ be a
	fixed graph and let $d$ be its degeneracy. Then w.h.p. $\tau({\mathcal
		P}_H, n) \leq g(n) \cdot n^{(d-1)/d}$, where $g : \mathbb{N} \to
	\mathbb{R}$ is a function which tends to infinity arbitrarily slowly
	as $n$ tends to infinity
%
\end{theorem}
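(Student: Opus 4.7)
The plan is to embed $H$ into Builder's graph vertex by vertex, following a degeneracy ordering, and to use each phase of the game to wait --- via a birthday-paradox type argument --- for a fresh vertex of $[n]$ to be offered by the random stream enough times to serve as the next image. Since $H$ has degeneracy $d$, each new vertex contributes at most $d$ ``backward'' edges, and the cost of waiting for some vertex to appear $d$ times in a stream of $N$ uniform samples from $[n]$ is of order $n^{(d-1)/d}$, which explains the target bound.

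Concretely, fix a degeneracy ordering $v_1, \dots, v_h$ of $V(H)$ in which each $v_i$ has at most $d_i \leq d$ earlier neighbors, denoted $v_{j_1(i)}, \dots, v_{j_{d_i}(i)}$. Split the $m := g(n)\, n^{(d-1)/d}$ rounds of the game into $h$ equal phases. During phase $i$, Builder maintains a counter $c(u)$ of the number of times each $u \in [n]$ has been offered so far in the phase; whenever the random vertex $u$ is offered for the $k$-th time with $k \leq d_i$ and $u \notin \{\phi(v_1), \dots, \phi(v_{i-1})\}$, Builder responds by adding the edge $u\phi(v_{j_k(i)})$. All other rounds use an arbitrary ``wasted'' edge (e.g.\ a loop at the offered vertex). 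At the end of phase $i$, Builder sets $\phi(v_i)$ to be any vertex $u \notin \{\phi(v_1), \dots, \phi(v_{i-1})\}$ with $c(u) \geq d_i$; by construction, all edges from $\phi(v_i)$ back to its predecessors are already in place.

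The key step is then to show that in every phase such a $u$ exists with probability $1 - o(1)$. Each phase has length $N = \Theta(g(n)\, n^{(d-1)/d})$; since $N/n = o(1)$, any fixed $u$ satisfies $\Pr[c(u) \geq d] = (1+o(1))(N/n)^d/d! = \Theta(g(n)^d/n)$. Writing $Y$ for the number of $u \in [n]$ with $c(u) \geq d$, this gives $\mathbb{E}[Y] = \Theta(g(n)^d) \to \infty$. The counters $\{c(u)\}_{u \in [n]}$ follow a multinomial distribution and are therefore negatively associated, so $\mathrm{Var}(Y) \leq \mathbb{E}[Y]$, and Chebyshev's inequality yields $\Pr[Y = 0] = o(1)$; since only $h-1 = O(1)$ vertices are forbidden, some valid $u$ is always available. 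A union bound over the $h$ phases finishes the proof.

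The main obstacle I anticipate is precisely this concentration step: one must verify that the slowly growing cushion $g(n)$ really is enough to produce at least one ``$d$-heavy'' vertex per phase while simultaneously avoiding the $O(1)$ previously fixed images. Once the negative-association-plus-Chebyshev calibration is in place, the remainder of the argument is essentially bookkeeping around the degeneracy ordering and the on-the-fly placement of predecessor edges, with the wasted edges causing no interference since $\mathcal{P}_H$ is monotone.
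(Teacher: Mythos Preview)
Your proof is correct and follows essentially the same approach as the paper: both arguments fix a degeneracy ordering of $H$, embed the vertices one at a time, and in each phase wait for some fresh vertex of $[n]$ to be offered enough times to serve as the next image, connecting it to the already-embedded back neighbors on the fly. The only cosmetic differences are that the paper ends each phase the moment a suitable vertex appears (rather than pre-allocating equal-length phases) and uses a Talagrand-based concentration lemma for $X_\ell^m$ in place of your negative-association-plus-Chebyshev argument; both variants yield the same $g(n)\, n^{(d-1)/d}$ bound.
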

%

It seems plausible that the upper bound stated in Theorem~\ref{th::fixedGraphUpperBound} is of the correct order of magnitude for every fixed graph $H$. At the moment, we can only prove this for cliques.

\begin{theorem} \label{thm:lowerCliqueOnLine}
%
	For a positive integer $d$, let ${\mathcal P}_d$ denote the graph
	property of containing a copy of $K_d$ as a subgraph. Then w.h.p.
	$\tau({\mathcal P}_d, n) \geq f(n) \cdot n^{(d-2)/(d-1)}$ for every $d
	\geq 2$, where $f : \mathbb{N} \to \mathbb{R}$ is a function which
	tends to zero arbitrarily slowly as $n$ tends to infinity
	%
\end{theorem}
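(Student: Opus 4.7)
The plan is to show that, for every Builder strategy, the expected number of copies of $K_d$ in $G_m$ is at most $m^{d-1}/n^{d-2}$. Setting $m = f(n) \cdot n^{(d-2)/(d-1)}$ makes this bound equal to $f(n)^{d-1} = o(1)$, so Markov's inequality will give $\Pr[K_d \subseteq G_m] = o(1)$ and the desired lower bound on $\tau(\mathcal{P}_d, n)$.

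Let $T_k(r)$ denote the number of copies of $K_k$ in the underlying simple graph of $G_r$. I would prove the strategy-independent bound $\E[T_d(m)] \leq m^{d-1}/n^{d-2}$ by induction on $d \geq 2$, with the base case $T_2(m) \leq m$ coming from the fact that the number of distinct edges is at most the number of rounds. For the inductive step, the key observation is that any new copy of $K_d$ appearing when the edge $v_r u_r$ is added in round $r$ must contain both $v_r$ and $u_r$, and hence its remaining $d-2$ vertices must span a $K_{d-2}$ inside the common neighbourhood $N_{G_{r-1}}(v_r) \cap N_{G_{r-1}}(u_r)$. This gives
\[
T_d(r) - T_d(r-1) \; \leq \; \#\{K_{d-2} \subseteq N_{G_{r-1}}(v_r) \cap N_{G_{r-1}}(u_r)\} \; \leq \; \#\{K_{d-2} \subseteq N_{G_{r-1}}(v_r)\},
\]
and the second inequality discards Builder's choice of $u_r$, which is exactly what makes the estimate hold uniformly over all strategies.

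I would then take the conditional expectation over $v_r$, which is uniform on $[n]$ and independent of $G_{r-1}$, and invoke the double-counting identity $\sum_{v \in [n]} \#\{K_{d-2} \subseteq N_G(v)\} = (d-1)\,T_{d-1}(G)$ (each copy of $K_{d-1}$ is counted once by each of its $d-1$ vertices). This produces $\E[T_d(r) - T_d(r-1) \mid G_{r-1}] \leq (d-1) T_{d-1}(r-1)/n$; summing over $r$ and plugging in the inductive hypothesis closes the induction after a routine calculation involving $\sum_{r=1}^{m} (r-1)^{d-2} \leq m^{d-1}/(d-1)$.

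The argument is really a recursive accounting of how many new cliques a single round can create, so I expect the only thing requiring a bit of care to be the book-keeping. In particular, keeping $T_k$ as a count in the underlying simple graph ensures that multi-edges and loops contribute nothing to any increment $T_d(r) - T_d(r-1)$, and are absorbed harmlessly into the base case $T_2(r) \leq r$. No deeper obstacle seems to arise.
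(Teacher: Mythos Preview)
Your argument is correct and follows essentially the same approach as the paper's own proof: both prove the recursion $\E[T_d(r)-T_d(r-1)\mid G_{r-1}]\le (d-1)T_{d-1}(r-1)/n$ by observing that every new $K_d$ in round $r$ is determined by a $K_{d-1}$ in $G_{r-1}$ containing the offered vertex $v_r$ (equivalently, a $K_{d-2}$ in $N_{G_{r-1}}(v_r)$), then sum over rounds, apply the inductive hypothesis, and finish with Markov. Your bookkeeping is in fact slightly cleaner, yielding the constant-free bound $\E[T_d(m)]\le m^{d-1}/n^{d-2}$ in place of the paper's $(\ell-1)^{\ell-2}\,m^{\ell-1}/n^{\ell-2}$.
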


A consequence of Theorems~\ref{thm:lowerCliqueOnLine} and~\ref{thm:OffFix} is that, perhaps not surprisingly, $\tau(\mathcal P_d, n) \gg \tau'(\mathcal P_d, n)$ for every $d \geq 3$.

\medskip

Our next result determines $\tau({\cd}_k, n)$ asymptotically for every fixed positive integer $k$.

\begin{theorem} \label{th::OnMinDegk}
	For every positive integer $k$, there exists a positive real number $h_k$ such that w.h.p. $\tau({\cd}_k, n) = (h_k + o(1)) n$. 
\end{theorem}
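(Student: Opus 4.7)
The plan is to establish matching upper and lower bounds of the form $(h_k + o(1))n$ for an explicit constant $h_k > 0$ arising from the natural greedy ``min-degree'' strategy.

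\textbf{Upper bound.} Consider the strategy $\mathcal S^*$: when the random vertex $v$ arrives, Builder connects it to a vertex $u$ of minimum current degree among those with degree less than $k$ (ties broken arbitrarily; if all vertices already have degree $\geq k$, we are done). Let $X_i(t) := |\{v \in [n] : \deg_t(v) = i\}|$ for $0 \leq i \leq k-1$. The process $(X_0, \ldots, X_{k-1})$ is Markovian, with one-step expected increments that are piecewise linear in the normalized state and in the current minimum degree $j^*$ (which is nondecreasing in $t$). Wormald's differential equation method yields that w.h.p.\ $X_i(t)/n$ tracks the solution of the piecewise-linear system
\[
\dot x_{j^*} = -1 - x_{j^*}, \quad \dot x_{j^*+1} = 1 + x_{j^*} - x_{j^*+1}, \quad \dot x_i = x_{i-1} - x_i \ \text{for } j^*+1 < i < k,
\]
solved phase-by-phase (each phase ending when $x_{j^*}$ reaches $0$ and $j^*$ increments). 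Letting $h_k > 0$ be the total elapsed ``time'' at which all $x_i$ vanish, this gives $\tau(\mathcal D_k, n) \leq (h_k + o(1))n$ w.h.p.

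\textbf{Lower bound.} Define the potential $\Phi(t) := \sum_{i=0}^{k-1}(k-i)X_i(t)$ (the remaining ``degree deficit'') and $Y(t) := \sum_{i<k} X_i(t)$. A short calculation shows that for \emph{every} strategy using Builder's choice on a degree-$<k$ vertex whenever one exists (assumed WLOG, since doing otherwise is never beneficial), one has $\mathbb{E}[\Phi(t) - \Phi(t-1) \mid \mathcal F_{t-1}] = -1 - Y(t-1)/n$. Thus the scaled potential $\phi(s) := \Phi(sn)/n$ satisfies $\dot \phi = -1 - y$ up to $o(1)$ fluctuations, and the completion time $T := \tau/n$ obeys $T + \int_0^T y(s)\,ds = k$. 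Since $y$ is nonincreasing and starts at $1$, minimizing $T$ is equivalent to keeping $y$ as large as possible throughout. The key claim yielding the matching lower bound is that the greedy strategy $\mathcal S^*$ stochastically maximizes $Y(t)$ among all strategies; combined with standard martingale concentration, this gives $\tau(\mathcal D_k, n) \geq (h_k - o(1))n$ w.h.p.

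\textbf{Main obstacle.} The delicate step is the stochastic dominance of $Y^*(t)$ (under $\mathcal S^*$) over $Y(t)$ under any other strategy. Intuitively, a non-greedy Builder move ``wastes'' a choice on a vertex that could later gain degree via random hits, accelerating the decrease of $Y$. I expect an exchange argument to formalize this: swap a single non-greedy move in an arbitrary strategy for the greedy one, couple the remaining random sequence, and show that the swap can only delay the time at which $Y$ reaches $0$. Formalizing this uniformly in $n$ requires exploiting the Lipschitz and piecewise-linear structure of the limiting ODE, so that an order-$n^{-1}$ perturbation at one step propagates predictably over the full trajectory.
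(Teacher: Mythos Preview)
Your upper bound via the greedy min-degree strategy and Wormald's differential equations method is correct and is exactly what the paper does (the paper cites Wormald's prior analysis of the min-degree process for the value of $h_k$ rather than rederiving the ODE, but the content is the same).

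The gap is in the lower bound. Your potential reformulation is valid---indeed one has the pathwise identity $\Phi(m)=kn-m-\sum_{t\le m}\mathbf{1}[\deg_{t-1}(v_t)<k]$ whenever Builder always selects a second endpoint of degree $<k$---and it does reduce optimality of greedy to showing that greedy stochastically maximizes $Y(t)$. But the mechanism you propose for that last step does not work. An ODE-Lipschitz bound only says that a single swap perturbs the limiting trajectory by $O(1/n)$; it carries no sign information, and summing $\Theta(n)$ such signed errors yields nothing. More seriously, an arbitrary adaptive strategy need not have a fluid limit at all, so comparing it to greedy at the ODE level is not well-posed. What is required is a monotone coupling at the process level. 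The paper supplies one: it first passes to the variant $(\mathcal D''_k,n)$ in which loops and multiple edges count toward the degree, so that the degree \emph{multiset} is a sufficient statistic for the remaining game. The exchange step is then: given an $i$-minimizing strategy $\mathcal S$, let $\mathcal S'$ agree with it for $i$ rounds, in round $i+1$ pick a minimum-degree vertex $w$ (while still sampling the vertex $u$ that $\mathcal S$ would have picked), and thereafter imitate $\mathcal S$---except that the first time $\deg(w)>\deg(u)$, permanently swap the roles of $u$ and $w$. Under this coupling $\min\{\deg u,\deg w\}$ is always at least as large under $\mathcal S'$ as under $\mathcal S$ while all other degrees coincide, so $\mathcal S'$ dominates $\mathcal S$ for the hitting time; iterating shows greedy is exactly optimal in $(\mathcal D''_k,n)$, and the transfer back to $(\mathcal D_k,n)$ costs only $o(n)$ rounds. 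The role-switching device is the missing idea in your sketch: without it, ``couple the remaining random sequence and continue'' is ill-defined, since after the swap the original strategy sees a different state and there is no canonical alignment of the two futures.
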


Note that the real numbers $h_k$ appearing in the statement of Theorem~\ref{th::OnMinDegk} can be (and were) calculated using Wormald's differential equations method~\cite{Wormald, Wormald2}. We discuss this in greater detail in Subsection~\ref{subsec:online_min_deg_k}.

\medskip

Given a positive integer $k$, let ${\mathcal C}_k$ denote the property of being $k$-vertex-connected. Clearly, $\tau({\mathcal C}_k, n) \geq \tau({\mathcal D}_k, n)$ holds for every $k$. Our next result shows that, like in several other graph models, ${\mathcal C}_k$ and ${\mathcal D}_k$ occur roughly at the same time in our semi-random graph process.

\begin{theorem} \label{thm:OnkCon}
	Let $k \geq 3$ be a fixed integer. Then w.h.p. $\tau({\mathcal C}_k, n) = (h_k + o(1)) n$.
\end{theorem}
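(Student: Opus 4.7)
The lower bound is immediate: every $k$-vertex-connected graph has minimum degree at least $k$, so $\mathcal{C}_k \subseteq \mathcal{D}_k$, and Theorem~\ref{th::OnMinDegk} yields $\tau(\mathcal{C}_k, n) \geq \tau(\mathcal{D}_k, n) = (h_k + o(1)) n$ w.h.p.

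For the upper bound, the plan is to have Builder run the strategy $\mathcal{S}_k$ from Theorem~\ref{th::OnMinDegk} that achieves minimum degree $k$ in $m = (h_k + o(1)) n$ rounds w.h.p., and to argue that the resulting multigraph $G$ is already w.h.p.\ $k$-vertex-connected. If the strategy as originally described is too rigid, I would adjust it so that whenever the offered vertex $v$ and all its potential partners already satisfy the minimum-degree target, Builder picks the second endpoint uniformly at random in $[n]$. This adjustment does not alter the round count, but injects enough ``background'' randomness to rule out small separators.

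To prove $k$-connectivity, I would bound the expected number of \emph{bad separations}: triples $(S, A, B)$ with $V = S \sqcup A \sqcup B$, $|S| = s \leq k - 1$, $1 \leq |A| = a \leq (n - s)/2$, and no $G$-edge between $A$ and $B$. I would split the analysis into two regimes. In the \emph{small-$a$ regime} ($a$ bounded by a constant), each vertex of $A$ must have at least $k$ distinct neighbors inside $A \cup S$, a set of size at most $a + k - 2$; this forces $a \geq 2$ (and this is where the hypothesis $k \geq 3$ provides the needed slack, since it guarantees $a + k - 2 \geq k$ automatically) and imposes a very rigid local structure on $G$. A direct calculation should show that, under $\mathcal{S}_k$, the probability of any specific such structure arising is polynomially small in $n$ with a power that beats the union bound $\binom{n}{a}\binom{n}{k-1} = O(n^{a+k-1})$. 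In the \emph{large-$a$ regime} ($a \to \infty$, $a \leq n/2$), the number of random offers falling in $A$ during the $m$ rounds concentrates around $m a / n = \Theta(a)$; under the adjusted strategy each such offer places an edge to $V \setminus (A \cup S)$ with probability at least $(n - a - s)/n \geq 1/2$, so the probability that no crossing edge appears is $e^{-\Omega(a)}$, which again beats $\binom{n}{a}\binom{n}{k-1}$ by standard entropy estimates.

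The main obstacle is the small-$a$ regime, which is sensitive to the precise form of $\mathcal{S}_k$. I would resolve this by committing to a concrete strategy in which each edge of $G$ can be attributed to a single random offer and a Builder-controlled partner drawn from an identifiable, nearly uniform distribution --- for instance, pair each offered vertex $v$ with a uniformly random current minimum-degree vertex whenever $v$ itself has degree at least $k$. Once this is pinned down, counting the ways that all $\geq k$ distinct neighbors of every $v \in A$ could land in the tiny set $A \cup S$ reduces to a routine probabilistic computation, and combining with the large-$a$ bound via a union bound over all $(S, A)$ closes the argument.
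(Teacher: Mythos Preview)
Your lower bound and overall architecture (run a min-degree-type strategy, then rule out separators in two size regimes) match the paper's. The decisive gap is in your large-$a$ regime. You assert that when a vertex of $A$ is offered, Builder's edge crosses into $B$ with probability at least $(n-a-s)/n$; but under $\mathcal S_{\min}$ the partner is a uniformly random \emph{minimum-degree} vertex, not a uniform vertex of $[n]$, and nothing prevents the current minimum-degree set from lying entirely inside $A\cup S$, in which case the edge stays there with probability $1$. Your ``adjustment'' is no remedy: it activates only once every vertex already has degree $k$, i.e.\ in at most $o(n)$ trailing rounds, far too few to generate $\Theta(a)$ crossing edges. The paper circumvents this by exploiting the randomness on the \emph{other} side of each edge: it counts rounds whose \emph{chosen} (minimum-neighbourhood) endpoint lies in the small side --- there must be at least $ka/2 - O(k^2)$ of these if $E_G(A,B)=\emptyset$ and every vertex has $\ge k$ neighbours --- and in each such round the \emph{offered} vertex is genuinely uniform, hence lands in $A\cup S$ only with probability $\approx (a+k)/n$, which beats the union bound.

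Two further points. First, the argument needs $k$ \emph{distinct} neighbours per vertex, not merely degree $\ge k$; the paper therefore replaces $\mathcal S_{\min}$ by a variant $\mathcal S^*_{\min}$ that connects the offered vertex to one of minimum neighbourhood size, and couples it back to $\mathcal S_{\min}$ at a cost of $o(n)$ rounds. Second, the small-$a$ regime is not a routine computation: whether a fixed tiny set $A\cup S$ accumulates many internal edges depends on how often the minimum-neighbourhood set meets it, which fluctuates throughout the process. The paper proves a structural lemma --- w.h.p.\ $e_G(U)\le |U|$ for every $|U|\le 101k^2$ --- via a bucketing of rounds according to the current number of minimum-neighbourhood vertices; combined with $\ge k$ neighbours per vertex this forces $(k-1)a \le a+k-1$, which fails for every $a\ge 2$ precisely when $k\ge 3$ (the border case $k=3,\,a=2$ is dispatched separately). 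That inequality, not the one in your parenthetical, is where the hypothesis $k\ge 3$ actually enters.
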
	

Note that, trivially, $\tau({\mathcal C}_1, n) = n-1$. On the other hand, the best bounds we currently have for $\tau({\mathcal C}_2, n)$ stem from the fact that $(h_2 + o(1))n = \tau({\mathcal D}_2, n) \leq \tau({\mathcal C}_2, n) \leq \tau({\mathcal C}_3, n) = (h_3 + o(1))n$. 

\medskip

We summarize the results of the last two subsections in the following table.
\begin{table}[h!]
	\centering
	\begin{tabular}{||c|| c c | c c||} 
		\hline
		Property & Online Bounds & & Offline Bounds & \\ [0.5ex] 
		\hline\hline
		\multirow{2}{*}{$\mathcal{P}_H$} &  \multirow{2}{*}{ $\leq g(n) \cdot n^{(d-1)/d}$} & \multirow{2}{*}{(\ref{th::fixedGraphUpperBound})}  &   $\geq f(n) \cdot n^{(r-1)/r}$ & \multirow{2}{*}{(\ref{thm:OffFix})}
		\\ & & & $\leq g(n) \cdot n^{(r-1)/r}$ &
		\\
		\hline
		${\mathcal P}_d$ &  $\geq f(n) \cdot n^{(d-2)/(d-1)}$ & (\ref{thm:lowerCliqueOnLine}) & &  \\
		\hline
		${\cd}_k$ & $= (h_k + o(1)) n$ & (\ref{th::OnMinDegk}) & $\alpha_k + o(1)$ & (\ref{th::MinDegkOffline}) \\
		\hline
		${\mathcal C}_k$ & = $(h_k + o(1)) n$ &(\ref{thm:OnkCon}) &  &  \\
		\hline
		\multicolumn{5}{c}{}\\
		 \multicolumn{5}{c}{$g$ tends to infinity arbitrarily slowly as $n$ tends to infinity}\\
		 \multicolumn{5}{c}{$f$ tends to zero arbitrarily slowly as $n$ tends to infinity}\\
		 \multicolumn{5}{c}{ $h_k$ is a known constant }\\
		 \multicolumn{5}{c}{$\alpha_k$ is the unique positive root of $f_k(x) := \sum_{i=0}^{k-1} \left(k-i \right) \frac{x^i}{i!} - x e^x$}
		 
	\end{tabular}
	
\end{table}

\subsection{Organization}

The rest of the paper is structured as follows. In Subsection~\ref{subsec::notation} we list some, mostly standard, notation that will be used throughout the paper. In Subsections~\ref{subsec::prob_tools} and~\ref{sec::BallsBins} we collect various useful probabilistic tools that will be used in later sections. In Section~\ref{sec:OtherModels} we show how to use our semi-random process to approximate various random graph models; in particular, we prove Propositions~\ref{pro:Mnm}, \ref{pro:Kout},  and~\ref{pro:Gmin}. In Section~\ref{sec:off} we study the offline version of the process; in particular, we prove Theorems~\ref{thm:OffFix} and~\ref{th::MinDegkOffline}. In Section~\ref{sec:on} we study the online version of the process; in particular, we prove Theorems~\ref{th::fixedGraphUpperBound}, \ref{thm:lowerCliqueOnLine}, \ref{th::OnMinDegk}, and~\ref{thm:OnkCon}. Finally, Section~\ref{sec::openprob} contains a discussion of our semi-random graph process, including open problems and suggestions for future research.


\section{Preliminaries} \label{sec::prelims}

\subsection{Notation} \label{subsec::notation}
Let $\mathbb{N}$ denote the set of all non-negative integers and let $\mathbb{R}$ denote the set of all real numbers. For a positive integer $n$ let $[n] = \{1, \ldots, n\}$.

In a (multi)graph $G=(V,E)$, $V$ is the set of vertices and $E$ is a
multiset of elements from $\binom {V}2$.  
Let $d_G(v)=|\{e\in E\ :\  v\in e \}|$
denote the degree of $v$ in $G$, that is, the number of edges incident to $v$ including multiplicities.
Let $\Delta(G)$ denote the maximum degree of $G$ and $\delta(G)$ the minimum degree of $G$.
For a vertex $v\in V$, let $N_G(v)=\{u\in V\setminus\{v\}\ :\  uv\in E \}$ denote the set of distinct neighbors of the vertex $v$. Note that for every $v\in V$, $|N_G(v)|\leq d_G(v)$.

For a directed graph $D$ and a vertex $v \in V(D)$, let $d^+_D(v)$ denote the out-degree of $v$ in $D$ and let $\Delta^+(D) = \max \{d_D^+(v) : v \in V(D)\}$ denote the maximum out-degree of $D$. Often, if there is no risk of confusion, we abbreviate $d_D^+(v)$ under $d^+(v)$. 
A graph $G$ is called $k$-degenerate if every subgraph of $G$ has a vertex of degree at most $k$. Equivalently, a graph $G$ is $k$-degenerate if there exists an ordering $(v_1, \ldots, v_n)$ of its vertices  such that $v_i$ has at most $k$ neighbors in the set $\{v_1, \ldots, v_{i-1}\}$ for every $2 \leq i \leq n$. The \textit{degeneracy} of a graph $G$ is the smallest value of $k$ for which $G$ is $k$-degenerate.   

For a monotone increasing property $\mathcal P$ and a vertex $v \in V(K_n)$, at any point during the game $(\mathcal P, n)$, let $\off(v)$ denote the number of times $v$ was offered up to that point.

\subsection{Probabilistic tools}
\label{subsec::prob_tools}
The following well-known bounds on the tails of the binomial distribution, due to Chernoff (see, e.g.,
\cite{AloSpe2008}, \cite{JLR}), will be used extensively.
%
%
%

\begin{lemma}\label{Che}
	Let $X\sim \Bin(n,p)$,	$\mu=\mathbb{E}(X)$ and $a\geq 0$, then
	\begin{enumerate}
		\item $\Pr\left[X\leq \mu-a\right]\leq  \exp\left(-\frac{a^2}{2\mu}\right)$;
		\item $\Pr\left[X\geq \mu+a\right]\leq \exp\left(-\frac{a^2}{2(\mu+\frac a3)}\right)$.
	\end{enumerate}
\end{lemma}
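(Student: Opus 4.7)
The plan is to apply the standard moment-generating function method (the Chernoff--Cram\'er bound). Writing $X$ as a sum of i.i.d.\ Bernoulli$(p)$ variables, the MGF factorizes to $\E[e^{tX}] = (1-p+pe^t)^n$, which by the inequality $1+y \le e^y$ applied termwise is at most $\exp\bigl(\mu(e^t - 1)\bigr)$ for every real $t$.

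For item (2), I would apply Markov's inequality to the non-negative random variable $e^{tX}$ with a parameter $t > 0$ to be chosen:
\[
\Pr[X \ge \mu + a] \;\le\; e^{-t(\mu+a)}\,\E[e^{tX}] \;\le\; \exp\bigl(\mu(e^t-1) - t(\mu+a)\bigr).
\]
Optimizing the exponent in $t$ yields the choice $t = \ln(1+a/\mu)$ (the degenerate case $\mu = 0$ is vacuous), which after substitution gives $\Pr[X \ge \mu + a] \le \exp\bigl(-\mu\,\varphi(a/\mu)\bigr)$, where $\varphi(x) := (1+x)\ln(1+x) - x$. For item (1), the same method with $t < 0$, together with the observation that the statement is vacuous when $a > \mu$ since $X \ge 0$, produces $\Pr[X \le \mu - a] \le \exp\bigl(-\mu\,\psi(a/\mu)\bigr)$ with $\psi(x) := (1-x)\ln(1-x) + x$.

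The remaining work, and what I expect to be the main obstacle, is to replace the clean Cram\'er exponents $\mu\varphi(a/\mu)$ and $\mu\psi(a/\mu)$ by the explicit quadratic-type expressions in the lemma. Concretely, I need the two analytic inequalities $\varphi(x) \ge \frac{x^2}{2(1+x/3)}$ for $x \ge 0$ and $\psi(x) \ge x^2/2$ for $0 \le x \le 1$. The second is immediate from the Taylor expansion $\psi(x) = \sum_{k \ge 2} \frac{x^k}{k(k-1)} \ge x^2/2$. For the first, I would set $g(x) := 2(1+x/3)\varphi(x) - x^2$, verify $g(0) = g'(0) = g''(0) = 0$ by direct differentiation (using $\varphi'(x) = \ln(1+x)$), and then compute $g'''(x) = \frac{4x}{3(1+x)^2} \ge 0$, from which $g \ge 0$ on $[0,\infty)$ follows. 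Multiplying through by $\mu$ and using the identity $\mu \cdot \frac{(a/\mu)^2}{2(1+a/(3\mu))} = \frac{a^2}{2(\mu + a/3)}$ then delivers the bound as stated.
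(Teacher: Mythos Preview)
Your argument is correct and is precisely the standard Chernoff--Cram\'er derivation: bound the moment generating function by $\exp(\mu(e^t-1))$, optimise in $t$ to obtain the Cram\'er exponents $\mu\varphi(a/\mu)$ and $\mu\psi(a/\mu)$, and then lower-bound those by the stated quadratic-type expressions via the two analytic inequalities, both of which you verify cleanly.

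There is nothing to compare against, however: the paper does not supply a proof of this lemma. It is simply quoted as a well-known tail bound, with references to \cite{AloSpe2008} and \cite{JLR}. Your write-up is exactly the proof one finds in those sources.
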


The following is a well-known concentration inequality due to Azuma~\cite{Azuma}.

\begin{theorem} \cite[Theorem 2.27]{JLR} \label{th::Azuma}
	Suppose that $Z_1, \ldots, Z_m$ are independent random variables taking their values in the set $[n]$. Suppose further that $X = f(Z_1, \ldots, Z_m)$, where $f : [n]^m \to {\mathbb R}$ is a function such that there exist constants $c_1, \ldots, c_m$ for which the following condition holds:
	\begin{description}
		\item [(a)] If $z, z' \in [n]^m$ differ only in the $k$th coordinate, then $|f(z') - f(z)| \leq c_k$.
	\end{description}
	Then for every $t \geq 0$ we have
	$$
	\Pr (|X - \E(X)| \geq t) \leq 2 \exp \left\{- \frac{t^2}{2 \sum_{k=1}^m c_k^2} \right\} \,.
	$$
\end{theorem}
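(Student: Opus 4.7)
The plan is to prove this via the standard Doob-martingale approach, which turns the bounded-differences condition~(a) into a sub-Gaussian tail bound through Hoeffding's lemma. First I would define the Doob martingale $X_k = \mathbb{E}[X \mid Z_1, \ldots, Z_k]$ for $k = 0, 1, \ldots, m$, so that $X_0 = \mathbb{E}(X)$ and $X_m = X$, and write $X - \mathbb{E}(X) = \sum_{k=1}^m D_k$ where $D_k = X_k - X_{k-1}$ is the $k$th martingale increment.

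Next I would show that $|D_k| \leq c_k$ almost surely. Conditioning on $Z_1 = z_1, \ldots, Z_{k-1} = z_{k-1}$ and writing $g_k(z_1, \ldots, z_k) = \mathbb{E}[f(z_1, \ldots, z_k, Z_{k+1}, \ldots, Z_m)]$, which is well-defined because the $Z_i$ are independent, one has
$$
D_k = g_k(z_1, \ldots, z_{k-1}, Z_k) - \mathbb{E}_{Z_k'}\bigl[g_k(z_1, \ldots, z_{k-1}, Z_k')\bigr].
$$
Condition~(a) together with independence gives $|g_k(z_1, \ldots, z_{k-1}, z_k) - g_k(z_1, \ldots, z_{k-1}, z_k')| \leq c_k$ pointwise (just pull the supremum through the expectation over $Z_{k+1}, \ldots, Z_m$), and averaging over $Z_k'$ yields $|D_k| \leq c_k$.

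The third step is to exponentiate: for any $\lambda > 0$, Hoeffding's lemma applied to the zero-mean, $[-c_k, c_k]$-bounded random variable $D_k$ (conditional on $\mathcal{F}_{k-1}$, the $\sigma$-algebra generated by $Z_1, \ldots, Z_{k-1}$) gives $\mathbb{E}[e^{\lambda D_k} \mid \mathcal{F}_{k-1}] \leq e^{\lambda^2 c_k^2/2}$. Iterating this bound via the tower property produces
$$
\mathbb{E}\!\left[e^{\lambda(X-\mathbb{E}(X))}\right] \leq \exp\!\left(\tfrac{\lambda^2}{2}\sum_{k=1}^m c_k^2\right),
$$
and a standard Markov/Chernoff step with $\lambda = t / \sum c_k^2$ yields $\Pr(X - \mathbb{E}(X) \geq t) \leq \exp(-t^2 / (2 \sum c_k^2))$. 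Applying the same argument to $-X$ (whose differences are still bounded by $c_k$) handles the lower tail, and a union bound gives the two-sided conclusion with the factor of $2$.

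The only genuinely delicate step is the second one: the bounded-difference hypothesis is stated for the function $f$, not for conditional expectations, so I would be careful to invoke independence of the $Z_i$ when pulling the bound from $f$ through to $g_k$. Everything else—Hoeffding's lemma and the Chernoff-bound optimization—is routine and can be cited if desired.
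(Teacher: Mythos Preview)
Your proof is correct and is the standard Doob-martingale proof of the bounded-differences (McDiarmid) inequality. However, the paper does not prove this statement at all: it is quoted as a known probabilistic tool from \cite[Theorem~2.27]{JLR} in the preliminaries section and used as a black box, so there is no ``paper's own proof'' to compare against. What you have written is essentially the argument one finds in that reference.
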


The following is a simplified version of a concentration inequality due to Talagrand~\cite{Talagrand}.

\begin{theorem}  \cite[page 81]{MR} \label{th::Talagrand}
	Let $X$ be a non-negative random variable, not identically $0$, which is determined by $n$ independent trials $T_1, \ldots, T_n$, and satisfying the following for some $c, r > 0$:
	\begin{description}
		\item [(a)] changing the outcome of any one trial can affect $X$ by at most $c$, and
		\item [(b)] for any $s$, if $X \geq s$, then there is a set of at most $r s$ trials whose outcomes certify that $X \geq s$,
	\end{description}
	then for any $0 \leq t \leq \mathbb{E}(X)$
	$$
	\Pr \left(|X - \mathbb{E}(X)| > t + 60 c \sqrt{r \mathbb{E}(X)} \right) \leq 4 \exp \left\{ -\frac{t^2}{8 c^2 r \mathbb{E}(X)} \right\} \,.
	$$
\end{theorem}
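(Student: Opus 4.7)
The plan is to derive the bound from Talagrand's convex distance inequality, which is the standard route to concentration results of this flavor. Write $\Omega = \Omega_1 \times \cdots \times \Omega_n$ for the product sample space of the trials $T_1, \ldots, T_n$, and recall the convex distance from $\omega \in \Omega$ to a set $A \subseteq \Omega$,
\[ d_T(\omega, A) \;=\; \sup_{\alpha \in \mathbb{R}^n_{\geq 0},\, \|\alpha\|_2 \leq 1}\; \inf_{\omega' \in A}\; \sum_{i \,:\, \omega_i \neq \omega'_i} \alpha_i. \]
Talagrand's inequality then states that $\Pr(A)\cdot \Pr\bigl(\{\omega : d_T(\omega,A) \geq u\}\bigr) \leq e^{-u^2/4}$ for every $A \subseteq \Omega$ and every $u \geq 0$. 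I would invoke this twice, once with $A = \{X \leq m\}$ to control the upper tail of $X$ and once with $A = \{X \geq m\}$ for the lower tail, where $m$ is a median of $X$, and then translate between the median and $\mathbb{E}(X)$.

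For the upper tail, fix $\omega$ with $X(\omega) = s > m$, and let $I = I(\omega) \subseteq [n]$ be the certifying set guaranteed by hypothesis~(b), so $|I| \leq rs$. The key observation is that any $\omega' \in A$ must disagree with $\omega$ on at least $(s-m)/c$ coordinates inside $I$: otherwise hypothesis~(a), applied only to the disagreements inside $I$, together with the certificate property would force $X(\omega') > m$, contradicting $\omega' \in A$. Choosing the test vector $\alpha$ equal to $1/\sqrt{|I|}$ on $I$ and $0$ elsewhere gives $d_T(\omega, A) \geq (s-m)/(c\sqrt{rs})$, and plugging this into the convex distance inequality together with $\Pr(A) \geq 1/2$ yields
\[ \Pr(X \geq s) \;\leq\; 2 \exp\!\left(-\frac{(s-m)^2}{4 c^2 r s}\right). \]
A symmetric argument, in which the certificate is applied to the event $\{X \geq m\}$ at points with $X(\omega) = s < m$, yields the matching lower-tail bound $\Pr(X \leq s) \leq 2 \exp(-(m-s)^2/(4 c^2 r m))$.

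It remains to pass from the median to the mean and to clean up constants. Integrating the tails above shows $|m - \mathbb{E}(X)| = O(c\sqrt{r \mathbb{E}(X)})$, so substituting $s = \mathbb{E}(X) \pm (t + 60 c\sqrt{r\mathbb{E}(X)})$ into the two bounds, bounding the $s$ in the denominator by $2\mathbb{E}(X) + O(c\sqrt{r\mathbb{E}(X)})$ (legitimate because $t \leq \mathbb{E}(X)$), and absorbing the median-mean gap into the slack term $60c\sqrt{r\mathbb{E}(X)}$ produces an exponent of the form $-t^2/(8 c^2 r \mathbb{E}(X))$; a union bound over the two tails contributes the factor of $4$. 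The main obstacle is the convex distance inequality itself, whose proof proceeds by induction on $n$ combined with a delicate one-parameter optimization, and I would take this as a black box. Everything else is careful bookkeeping, the only subtlety being that the absolute constant $60$ must be chosen large enough to absorb both the median-mean correction and the fact that the tail-bound denominator carries $s$ rather than $\mathbb{E}(X)$; both discrepancies are $O(c\sqrt{r\mathbb{E}(X)})$, so any sufficiently large constant works.
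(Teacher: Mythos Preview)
The paper does not prove this theorem at all: it is quoted verbatim from Molloy and Reed's book \cite{MR} as a black-box tool, with no accompanying argument. So there is nothing in the paper to compare your proposal against.

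That said, your sketch is the standard derivation and is essentially how Molloy and Reed themselves obtain the statement from Talagrand's convex distance inequality, so it is correct in outline. One small point worth tightening: in your lower-tail paragraph you say the certificate is ``applied to the event $\{X \geq m\}$ at points with $X(\omega) = s < m$'', which reads as if the certificate lives at $\omega$. In fact the certificate of size at most $rm$ is attached to a point $\omega'$ with $X(\omega') \geq m$, and one then bounds $d_T(\omega', \{X \leq s\})$ from below; Talagrand's inequality with $A = \{X \leq s\}$ and the fact that $\Pr(X \geq m) \geq 1/2$ then gives $\Pr(X \leq s) \leq 2\exp\bigl(-(m-s)^2/(4c^2 r m)\bigr)$. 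This is presumably what you intended, but the phrasing is slightly garbled. Everything else (the upper-tail certificate argument, the median-to-mean transfer via tail integration, and the absorption of the $O(c\sqrt{r\mathbb{E}(X)})$ slack into the constant $60$) is fine.
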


\subsection{Balls Into Bins} \label{sec::BallsBins}

Consider $m$ balls, placed into $n$ bins labeled $1, 2, \ldots, n$, where for each ball, we choose a bin u.a.r.\ and independently from all previous choices. For every $1 \leq i \leq m$, let $Z_i$ denote the bin chosen for ball $i$. Note that $Z_1, \ldots, Z_m$ are independent random variables. For every non-negative integer $k$, let $X_k^m = X_k^m(n)$ be the random variable counting the number of bins containing exactly $k$ balls and let $f(n, m, k) = \E(X_k^m)$. It is evident that
\begin{equation} \label{eq::expectationXkmn}
	f(n,m,k) = n \binom{m}{k} \lf( \frac{1}{n} \rt)^{k} \lf( 1 - \frac{1}{n} \rt)^{m-k}.
\end{equation}

If $k = o(\min \{n, \sqrt{m}\})$, then~\eqref{eq::expectationXkmn} takes the following simpler form:
\begin{equation} \label{e:exactly-k}
	f(n,m,k) = (1 + o(1)) \frac{e^{- m/n}}{k!} \cdot \frac{m^k}{n^{k-1}}.
\end{equation}

The following bound on the maximum number of balls in any single bin is an immediate consequence of~\eqref{e:exactly-k}. 
\begin{corollary} \label{cor::maxLoad}
	If $m = O(n)$, then w.h.p. $\max \{k : X_k^m(n) > 0\} \leq \log n$.
\end{corollary}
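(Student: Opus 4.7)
The plan is to apply a straightforward union bound to the expected values $f(n,m,k)$ for $k \geq \log n$ and show that the resulting sum is $o(1)$. Fix a constant $C$ with $m \leq Cn$. Using the bound $\binom{m}{k} \leq (em/k)^k$ in formula~\eqref{eq::expectationXkmn}, I obtain
\[
f(n,m,k) \;\leq\; n \binom{m}{k} n^{-k} \;\leq\; n \cdot \left(\frac{em}{kn}\right)^{k} \;\leq\; n \cdot \left(\frac{eC}{k}\right)^{k}.
\]
Setting $k_0 := \lceil \log n \rceil$, the right-hand side becomes $n \cdot (eC/\log n)^{\log n} = n^{\,1+\log(eC) - \log\log n}$, which is $o(1)$ (in fact, it decays faster than any polynomial in $1/n$).

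Next I would control the full tail sum by a geometric series argument. Directly from~\eqref{eq::expectationXkmn},
\[
\frac{f(n,m,k+1)}{f(n,m,k)} \;=\; \frac{m-k}{(k+1)(n-1)} \;\leq\; \frac{2C}{k+1} \;\leq\; \frac{2C}{\log n}
\]
for all $k \geq k_0$ and $n$ large enough, so the ratio is at most $1/2$. Hence
\[
\sum_{k \geq k_0} f(n,m,k) \;\leq\; 2\, f(n,m,k_0) \;=\; o(1).
\]

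Finally, $\max\{k : X_k^m > 0\} > \log n$ implies that some $X_k^m$ with $k > \log n$ is nonzero, so by Markov's inequality
\[
\Pr\bigl[\max\{k : X_k^m > 0\} > \log n \bigr] \;\leq\; \sum_{k > \log n} \E[X_k^m] \;=\; o(1),
\]
which yields the claim. The computation is elementary; the only point to watch is that the estimate on $f(n,m,k_0)$ and the geometric ratio both hold uniformly for all $m \leq Cn$ (trivially, if $m \leq \log n$ then the max load is at most $m \leq \log n$, so the interesting range is $m \geq \log n$ and the formulas above apply directly). I do not foresee a real obstacle.
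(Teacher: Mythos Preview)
Your proof is correct and follows essentially the same route the paper intends: bound the expected number of bins with load at least $\log n$ using the formula for $f(n,m,k)$ and apply Markov's inequality. The paper merely states that the corollary is an immediate consequence of~\eqref{e:exactly-k} without spelling out the computation; you use the exact expression~\eqref{eq::expectationXkmn} instead, which is arguably cleaner since the asymptotic form~\eqref{e:exactly-k} comes with the hypothesis $k = o(\sqrt{m})$ and would need a separate remark to cover the full tail, whereas your geometric-ratio bound handles all $k \geq \lceil \log n \rceil$ uniformly.
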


\begin{remark}
	Much more accurate bounds on $\max \{k : X_k^m(n) > 0\}$ are known for a wider range of values of $m$ (see, e.g.,~\cite{RS}). However, Corollary~\ref{cor::maxLoad} will suffice for the purposes of this paper.
\end{remark}

We next prove that $X_k^m$ is concentrated around its mean.

\begin{lemma} \label{lem::Balls}
	For every $t \geq 0$
	$$
	\Pr(|X_k^m - f(n,m,k)| \geq t) \leq 2 \exp \{- t^2/(8 m)\}.
	$$
\end{lemma}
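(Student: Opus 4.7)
The plan is a direct application of Azuma's inequality (Theorem~\ref{th::Azuma}) via the bounded differences method. The random variables $Z_1,\ldots,Z_m\in[n]$ recording which bin each ball lands in are already independent, and $X_k^m$ is a function $X_k^m=\varphi(Z_1,\ldots,Z_m)$ of them, so the framework of Theorem~\ref{th::Azuma} applies verbatim once we bound the per-coordinate Lipschitz constants $c_i$.

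The key step is to verify that we may take $c_i=2$ for every $i$. Suppose $z,z'\in[n]^m$ differ only in coordinate $i$: under $z$ ball $i$ goes to bin $a$, under $z'$ it goes to bin $b\neq a$, and all other balls are placed identically. The multiset of bin-loads changes in exactly two positions: bin $a$ loses one ball and bin $b$ gains one. Each of these two bins independently can change its membership in the set $\{\text{bins with exactly }k\text{ balls}\}$ by at most one, so $|\varphi(z')-\varphi(z)|\le 2$. (For bins other than $a$ and $b$ the load is unchanged, hence their contribution is unchanged.) Therefore condition (a) of Theorem~\ref{th::Azuma} holds with $c_1=\cdots=c_m=2$.

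Plugging into Azuma's inequality yields
\[
\Pr(|X_k^m-f(n,m,k)|\ge t)\le 2\exp\!\lf\{-\frac{t^2}{2\sum_{i=1}^m c_i^2}\rt\}=2\exp\!\lf\{-\frac{t^2}{8m}\rt\},
\]
which is exactly the claimed bound. There is no real obstacle here; the only thing to be careful about is that a single ball move affects \emph{two} bins (the old one and the new one), not one, which is why the constant in the exponent is $8m$ rather than $2m$. Because the $Z_i$'s are genuinely independent (balls are placed with replacement) and the functional has such a small Lipschitz constant, no stronger tool (Talagrand, self-bounding, etc.) is needed, and the bound holds uniformly in $k$, $n$, $m$ and $t\ge 0$.
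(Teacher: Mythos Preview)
Your proof is correct and is essentially identical to the paper's own proof: both observe that $X_k^m$ is a function of the independent bin choices $Z_1,\ldots,Z_m$, verify the Lipschitz constant $c_i=2$ (since moving one ball affects at most two bins), and apply Theorem~\ref{th::Azuma} directly.
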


\begin{proof}
	It is evident that $X_k^m = f(Z_1, \ldots, Z_m)$ for some function $f : [n]^m \to {\mathbb R}$. Moreover, since moving one ball from one bin to another can change $X_k^m$ by at most $2$, it follows that Property (a) from Theorem~\ref{th::Azuma} holds with $c_1 = \ldots = c_m = 2$. Applying Theorem~\ref{th::Azuma} to $X_k^m$, we conclude that
	$$
	\Pr(|X_k^m - f(n,m,k)| \geq t) \leq 2 \exp \{- t^2/(8 m)\}.
	$$
\end{proof}

Lemma~\ref{lem::Balls} can be used to show that $X_k^m$ is concentrated around its mean whenever $f(n,m,k) = \omega(\sqrt{m})$. For smaller values of $f(n,m,k)$, we use the following lemma.

\begin{lemma} \label{lem::BallsTalagrand}
	If $\sum_{i = k+1}^m X_i^m = 0$, then
	$$
	\Pr \left(|X_k^m - f(n,m,k)| > t + 120 \sqrt{k f(n,m,k)} \right) \leq 4 \exp \left\{ -\frac{t^2}{32 k f(n,m,k)} \right\},
	$$
	for every $0 \leq t \leq f(n,m,k)$.
\end{lemma}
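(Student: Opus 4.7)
The plan is to apply Talagrand's inequality (Theorem~\ref{th::Talagrand}) to $X = X_k^m$, viewed as a function $f(Z_1, \ldots, Z_m)$ of the independent trials, where $Z_i$ records the bin chosen by ball $i$.

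First, I would verify condition (a). As noted in the proof of Lemma~\ref{lem::Balls}, moving a single ball from one bin to another can change $X_k^m$ by at most $2$, so we may take $c = 2$.

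Next, I would verify the certification condition (b) with $r = k$. Suppose that in some configuration $X_k^m \geq s$; then there exist $s$ distinguished bins each containing exactly $k$ balls. For each such bin, record the $k$ trial indices whose outcomes placed a ball there, yielding a set $T$ of $ks$ trial indices. Under the hypothesis $\sum_{i=k+1}^m X_i^m = 0$, the outcomes of the trials in $T$ certify $X_k^m \geq s$: they force each distinguished bin to contain at least $k$ balls, and the hypothesis that no bin holds more than $k$ balls upgrades this to exactly $k$. This is the main delicate point of the argument: without the hypothesis, the unspecified trials could place additional balls in one of the distinguished bins, pushing it above the threshold $k$ and thereby reducing $X_k^m$, so no certificate of this form would be available. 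The hypothesis $\sum_{i>k}X_i^m=0$ is precisely what removes this obstruction.

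Finally, substituting $c = 2$, $r = k$, and $\mathbb{E}(X_k^m) = f(n,m,k)$ into Theorem~\ref{th::Talagrand} yields the claimed bound immediately, since $60c = 120$ and $8c^2 = 32$, and the range of $t$ matches the hypothesis $0 \leq t \leq \mathbb{E}(X)$ of that theorem.
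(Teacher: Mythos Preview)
Your proposal is correct and follows essentially the same approach as the paper: verify condition~(a) of Theorem~\ref{th::Talagrand} with $c=2$, use the hypothesis $\sum_{i>k} X_i^m = 0$ to argue that $ks$ trial outcomes certify $X_k^m \geq s$ so that condition~(b) holds with $r=k$, and substitute. Your discussion of why the hypothesis is needed for certification is more explicit than the paper's, but the argument is otherwise identical.
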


\begin{proof}
	It is evident that $X_k^m$ is a non-negative random variable which not identically $0$, and that $X_k^m$ is determined by $m$ independent trials. Moreover, since moving one ball from one bin to another can change $X_k^m$ by at most $2$, it follows that Property (a) from Theorem~\ref{th::Talagrand} holds with $c = 2$. Now, if $X_k^m \geq s$, then there are $s$ bins, each containing exactly $k$ balls. Since $\sum_{i = k+1}^m X_i^m = 0$ by assumption, it follows that there are $k s$ balls which certify that $X_k^m \geq s$. Therefore, Property (b) from Theorem~\ref{th::Talagrand} holds with $r = k$. We can thus apply Theorem~\ref{th::Talagrand} to deduce that
	$$
	\Pr \left(|X_k^m - f(n,m,k)| > t + 120 \sqrt{k f(n,m,k)} \right) \leq 4 \exp \left\{ -\frac{t^2}{32 k f(n,m,k)} \right\}
	$$
	holds for every $0 \leq t \leq f(n,m,k)$ as claimed.
\end{proof}


\section{Connections to other random graph models} \label{sec:OtherModels}

\subsection{Random graph model and random multigraph model}

\begin{proof}[Proof of Proposition~\ref{pro:Mnm}]
	
	Builder's strategies for approximating both processes are similar.
	
	\textbf{Strategy $\mathcal S_M$:} When offered some vertex $v \in [n]$, he chooses a vertex $u$ u.a.r.~from $[n] \setminus \{v\}$. Then, Builder adds the edge $uv$ to his graph.
	
	\textbf{Strategy $\mathcal S_G$:}
	When offered some vertex $v \in [n]$, he chooses a vertex $u$ u.a.r.~from $[n] \setminus \{v\}$. Then, Builder adds $uv$. If $uv$ was added before, then he considers this round as a failure (which will not be part of the graph he aims to build).
	
	It is easy to see that, following $\mathcal S_M$, the probability of any edge being chosen in any round is $2 \cdot \frac{1}{n} \cdot \frac{1}{(n-1)} = \binom{n}{2}^{-1}$ which is precisely the probability of this edge to be chosen in $\{M(n,m)\}_{m \geq 0}$. That is, these two processes are identical.
	
	As for the classical random graph process, the two processes $G(n,m)$ and  $\mathcal S_G(n,m)$ begin to differ as soon as the first multiple edge is claimed. We begin by showing that w.h.p.  this does not happen if $m = o(n)$. Indeed, fix $m = o(n)$ and assume we have run $\mathcal S_G(n,k)$  for some $0 \leq k < m$. The probability that the edge chosen in round $k+1$ already exists in $G_k\sim \mathcal S_G(n,k)$ is at most $k \binom{n}{2}^{-1}$. Hence, the probability that $G_m$ contains multiple edges is at most $\sum_{i=0}^{m-1} i \binom{n}{2}^{-1} = \binom{m}{2} \binom{n}{2}^{-1} = o(1)$.
	
	As $m$ gets larger, some failures are expected, that is, w.h.p. Builder's (multi)graph will contain multiple edges. However, their number will be negligible assuming that $m = o(n^2)$. Indeed, by the above calculation, the expected number of failures after $m$ rounds of running our process according to $\mathcal S_G$ is at most $\sum_{i=0}^{m-1} i \binom{n}{2}^{-1} = \binom{m}{2} \binom{n}{2}^{-1} = o(m)$. Hence, it follows by Markov's inequality that w.h.p.\ there are $o(m)$ failures.
\end{proof}

\subsection{The $k$-out model} \label{subsec::kout}

In this subsection we prove Proposition~\ref{pro:Kout} by describing a strategy $\mathcal S$ for which $\mathcal S(n,m)$ can be coupled to the well-studied random graph model $\mathcal G_{k\text{-out}}(G)$ (where $G$ is a graph on $n$ vertices, $k \leq \delta(G)$ is a positive integer, and $m$ roughly equals $k n$). 
The strategy $\mathcal S_{out}$ is roughly described as follows. Suppose that the set of vertices is $[n]$. For every $1 \leq t \leq n$, we consider $k$ consecutive rounds of the process, and in all such rounds we connect the offered vertex to $t$. If at least one of these rounds adds a loop or an edge that already exists, we run $r$ extra rounds (for a suitable $r$), and connect all vertices offered in these rounds to $t$ as well. As it turns out, for every fixed $k$, the total number of extra rounds required is w.h.p. $o(n)$.


\begin{proof} [Proof of Proposition~\ref{pro:Kout}]
	For positive integers $k$ and $r$, let $\mathcal G'_{(k,r)\text{-out}}(n)$ be the probability space of multigraphs $H$ obtained via the following procedure:
	each vertex $i \in [n]$ chooses $k$ out-neighbors, one by one, u.a.r.\ with replacement among all the vertices in $[n]$ (including $i$). If, during these $k$ choices, $i$ chose itself or the same vertex more than once, then $i$ chooses $r$ additional out-neighbors, using the same procedure, to create a digraph $D$. Finally, $H$ is obtained from $D$ by ignoring orientations (note that $H$ might contain loops and multiple edges).
	
	Given positive integers $k$ and $r$, let $\mathcal S_{out} = \mathcal S_{out}(k,r)$ be the following strategy of Builder.  
	
	\medskip	
	
	\noindent	\textbf{Strategy $\mathcal S_{out}$:} For every $i \geq 1$, let $u_i$ denote the vertex Builder is offered in the $i$th round. In the first $k$ rounds, Builder claims the edges $1 u_1, \ldots, 1 u_k$. If $1 \in \{u_1, \ldots, u_k\}$ or $u_i = u_j$ for some $1 \leq i < j \leq k$, then in the next $r$ rounds Builder claims the edges $1 u_{k+1}, \ldots, 1 u_{k+r}$. Builder now chooses the ``out-neighbors'' of $t$ for every $2 \leq t \leq n$ similarly. Namely, assume that for some $2 \leq t \leq n$ Builder has already chosen the ``out-neighbors'' of $j$ for every $1 \leq j \leq t-1$ (using the process described above for $t=2$) and this took him $m_{t-1}$ rounds. In rounds $m_{t-1} + 1, \ldots, m_{t-1} + k$, Builder claims the edges $t u_{m_{t-1} + 1}, \ldots, t u_{m_{t-1} + k}$. If $t \in \{u_{m_{t-1} + 1}, \ldots, u_{m_{t-1} + k}\}$ or $u_i = u_j$ for some $m_{t-1} + 1 \leq i < j \leq m_{t-1} + k$, then in the next $r$ rounds Builder claims the edges $t u_{m_{t-1} + k + 1}, \ldots, t u_{m_{t-1} + k + r}$.  
	
	\medskip	
	
	
	For every $1 \leq i \leq n$, let $A_i$ denote the multiset of ``out-neighbors'' Builder chooses for $i$ when playing according to $\mathcal S_{out}$; we refer to $A_i$ as the \emph{$i$th block}. Observe that for every $1 \leq i \leq n$, we have $|A_i| = k$ (in which case we will say that $A_i$ is \emph{small}) or $|A_i| = k+r$ (in which case we will say that $A_i$ is \emph{big}). Let $n'$ denote the number of big blocks.   
	
	\begin{observation}\label{obs:kout}
		The probability space $\mathcal S_{out}(n, k n + r n')$ is the same as the probability space $\mathcal G'_{(k,r)\text{-out}}(n)$.
	\end{observation}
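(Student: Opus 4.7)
The plan is to show that the two probability spaces are literally the same by exhibiting a coupling under which the multigraph produced by $\mathcal S_{out}$ coincides, edge by edge, with the one produced by the $\mathcal G'_{(k,r)\text{-out}}(n)$ procedure. The starting point is the fact that the vertices offered to Builder in successive rounds form an i.i.d.\ sequence $u_1,u_2,\ldots$ with each $u_i$ uniform on $[n]$, and that this sequence is generated independently of Builder's strategy.

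First I would parse the strategy $\mathcal S_{out}$ as consuming the sequence $u_1,u_2,\ldots$ in blocks: the $t$th block $A_t$ consists of $u_{m_{t-1}+1},\ldots,u_{m_{t-1}+k}$, together with $r$ further entries $u_{m_{t-1}+k+1},\ldots,u_{m_{t-1}+k+r}$ if and only if either $t\in\{u_{m_{t-1}+1},\ldots,u_{m_{t-1}+k}\}$ or two of those $k$ values coincide. The block $A_t$ is exactly the multiset of endpoints of the edges Builder incidents to $t$ during the rounds allocated to vertex $t$, and $m_t=m_{t-1}+|A_t|$. Since the boundaries $m_t$ are measurable functions of the $u_i$'s up to index $m_t$, different blocks use disjoint segments of the i.i.d.\ sequence, and therefore $A_1,\ldots,A_n$ are mutually independent random multisets.

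Next I would observe that the conditional distribution of $A_t$ (given nothing, by independence from the earlier blocks) matches exactly the distribution of the out-neighbor multiset of vertex $t$ under $\mathcal G'_{(k,r)\text{-out}}(n)$: in both cases, one draws $k$ i.i.d.\ uniform samples from $[n]$, and iff there is a repeated value or the sample $t$ itself appears, one draws $r$ additional i.i.d.\ uniform samples. Hence the joint distribution $(A_1,\ldots,A_n)$ in $\mathcal S_{out}(n,kn+rn')$ coincides with the joint distribution of the out-neighbor multisets in $\mathcal G'_{(k,r)\text{-out}}(n)$. Finally, I would note that the resulting multigraph on $[n]$ is in each model obtained deterministically from $(A_1,\ldots,A_n)$ by the same rule (add an edge from $t$ to each entry of $A_t$, keeping multiplicities and loops), so the two probability spaces are equal.

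There is no real obstacle here; the claim is essentially a definitional unpacking. The only point that requires a moment of care is the fact that the block lengths $|A_t|$ are themselves random, so one must verify that the partition of the i.i.d.\ sequence into blocks is a stopping-time-type partition which preserves independence between blocks; this is immediate because the decision of whether $A_t$ has size $k$ or $k+r$ depends only on the $u_i$'s inside $A_t$ and on $t$ itself. Consequently $n'=\sum_{t=1}^n\mathbf{1}[|A_t|=k+r]$ is well-defined and the total number of rounds consumed is $kn+rn'$, which justifies the parametrization in the statement.
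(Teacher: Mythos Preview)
Your argument is correct. The paper itself provides no proof for this observation; it is stated as immediate from the definitions of $\mathcal S_{out}$ and $\mathcal G'_{(k,r)\text{-out}}(n)$, and your write-up is precisely the definitional unpacking one would give if pressed for details, including the one nontrivial point (that the random block boundaries $m_t$ are stopping times depending only on the entries of the current block, so the blocks $A_1,\ldots,A_n$ are independent and each distributed exactly as the out-neighbor multiset of the corresponding vertex in $\mathcal G'_{(k,r)\text{-out}}(n)$).
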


	\begin{claim}\label{claim:1kout}
		Following $\mathcal S_{out}$ with $r=2$, w.h.p.\ $A_i \setminus \{i\}$ contains at least $k$ different vertices for every $1 \leq i \leq n$.
	\end{claim}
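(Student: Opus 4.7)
The plan is to split into the small-block and big-block cases. By definition, a small block $A_i$ consists of $k$ pairwise distinct vertices none of which is $i$, so $|A_i \setminus \{i\}| = k$ holds trivially. The work is therefore concentrated on bounding, for each fixed $i \in [n]$, the probability that $A_i$ is big \emph{and} $A_i \setminus \{i\}$ has fewer than $k$ distinct elements, and then taking a union bound over $i$.

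To control the per-$i$ failure probability, I would couple the process with a single sequence $u_1, u_2, \ldots, u_{k+2}$ of i.i.d.\ uniform draws from $[n]$ for each vertex $i$ (only the first $k$ are actually used if the block turns out to be small). The key observation is that if the block is big and $A_i \setminus \{i\}$ contains at most $k-1$ distinct elements, then the entire multiset $\{u_1, \ldots, u_{k+2}\}$ is contained in $\{i\} \cup S$ for some $S \subseteq [n] \setminus \{i\}$ with $|S| = k-1$. For each fixed such $S$, this containment has probability $(k/n)^{k+2}$, and a union bound over the $\binom{n-1}{k-1}$ choices of $S$ yields
\[
	\Pr\bigl[A_i \text{ is big and } |A_i \setminus \{i\}| \leq k-1\bigr] \leq \binom{n-1}{k-1} \left(\frac{k}{n}\right)^{k+2} = O(n^{-3}),
\]
where the implicit constant depends only on $k$.

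A final union bound over the $n$ choices of $i$ then gives a total failure probability of $O(n^{-2}) = o(1)$, establishing the claim. The one subtlety worth flagging is that one must choose a bound on the single-block failure probability strong enough to survive the union bound over $i$: cruder approaches such as first-moment estimates on the number of colliding pairs yield only $O(k^2/n)$ per block, which falls short. The subset-containment bound above, which essentially exploits the fact that three ``coincidences'' (either repeats, or draws equal to $i$) must occur among the $k+2$ draws, is the cleanest way I see to gain the extra factor of $1/n^2$ needed.
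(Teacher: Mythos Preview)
Your argument is correct. Both your approach and the paper's reach the conclusion via a union bound over $i$, but the per-block analysis is different. The paper argues by collision type: it calls a block \emph{bad of type I} if some vertex appears at least three times in $A_i$, and \emph{bad of type II} if two distinct vertices each appear twice in $A_i$; each of these events has probability $O(1/n^2)$ per block (from $\binom{k+2}{3}/n^2$ and $\binom{k+2}{2}\binom{k}{2}/n^2$ respectively), and one checks that a block which is neither bad of type I nor of type II automatically satisfies $|A_i\setminus\{i\}|\ge k$. A union bound over $i$ then gives total failure probability $O(1/n)$. You instead observe directly that a failing big block forces all $k+2$ draws into a fixed set of size $k$, and bound this by a union over the $\binom{n-1}{k-1}$ possible such sets, obtaining $O(1/n^3)$ per block and $O(1/n^2)$ overall. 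Your bound is quantitatively sharper, though both are more than sufficient; the paper's decomposition into two explicit bad configurations is perhaps more transparent about \emph{which} coincidences force failure, while your containment argument is more compact and avoids the case split entirely.
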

	
	\begin{proof}
		A block $A_i$ is called \emph{bad of type I} if some vertex $u \in [n]$ appears in $A_i$ at least three times. Similarly, a block $A_i$ is called \emph{bad of type II} if there are two distinct vertices $u, v \in [n]$, each appearing twice in $A_i$. Finally, a block is called \emph{bad} if it is bad of type I or II. For every $1 \leq i \leq n$, let $p_i$ denote the probability that $A_i$ is bad of type I. Clearly
		$$
		p_i \leq \binom{|A_i|}{3} \frac{1}{n^2} \leq \frac{k(k+1)(k+2)}{6 n^2}
		$$
		for every $1 \leq i \leq n$. Therefore the probability that there exists a block which is bad of type I is at most 
		$$
		\sum_{i=1}^n p_i \leq \frac{k(k+1)(k+2)}{6 n} = o(1).
		$$
		An analogous argument shows that the probability that there exists a block which is bad of type II is at most
		$$
		\sum_{i=1}^n \binom{|A_i|}{2} \binom{|A_i|-2}{2} \frac{1}{n^2} \leq \frac {k^2 (k+2)^2}{4n} = o(1).
		$$
		It follows that w.h.p.\ there are no bad blocks. We conclude that w.h.p.\ $A_i \setminus \{i\}$ contains at least $k$ different vertices for every $1 \leq i \leq n$ as claimed.
	\end{proof}
	
	
	

	\begin{claim}\label{claim:2kout}
		Following $\mathcal S_{out}$ (with any fixed $k$ and $r$), w.h.p.\ $n' = o(n)$.
	\end{claim}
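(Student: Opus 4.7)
The plan is to show that the expected number of big blocks is in fact $O(1)$ for fixed $k$ and $r$, which by Markov's inequality immediately yields $n' = o(n)$ w.h.p. (indeed a much stronger statement).

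First I would observe that whether $A_i$ is big depends only on the $k$ vertices $u_{m_{i-1}+1}, \ldots, u_{m_{i-1}+k}$ offered in the first $k$ rounds of block $i$, and these are i.i.d.\ uniform on $[n]$ and independent of everything that happened in previous blocks (they come from fresh rounds of the semi-random process). By definition, $A_i$ is big iff either $i$ itself appears among these $k$ offered vertices, or two of the $k$ offered vertices coincide.

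Next I would bound the probability of each of these two events by a union bound: the probability that $i$ is offered in at least one of the $k$ rounds is at most $k/n$, and the probability that some pair among the $k$ offered vertices coincides is at most $\binom{k}{2}/n$. Hence
\[
	\Pr[A_i \text{ is big}] \leq \frac{k}{n} + \frac{\binom{k}{2}}{n} = O(k^2/n).
\]
By linearity of expectation,
\[
	\mathbb{E}[n'] = \sum_{i=1}^n \Pr[A_i \text{ is big}] \leq n \cdot O(k^2/n) = O(k^2) = O(1),
\]
since $k$ is fixed. Finally, Markov's inequality yields $\Pr[n' \geq \log n] \leq O(1)/\log n = o(1)$, so w.h.p.\ $n' \leq \log n = o(n)$, as claimed.

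There is no real obstacle here: the key point is just that the ``bigness'' of distinct blocks depends on disjoint segments of the i.i.d.\ random offer sequence, so the probabilities add up via linearity of expectation and each individual block is big with probability only $O(1/n)$ for fixed $k$. The constant $r$ plays no role in this calculation since it only affects the size of a block conditional on being big, not the probability of being big.
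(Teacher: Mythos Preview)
Your proof is correct and follows essentially the same approach as the paper: bound the probability that a single block is big by $O(1/n)$, then conclude via a tail inequality that w.h.p.\ $n' \leq \log n$. The only minor difference is that you use linearity of expectation followed by Markov's inequality, whereas the paper observes that $n'$ is stochastically dominated by a $\Bin(n,O(1/n))$ random variable and applies Chernoff; both routes yield the same conclusion, and your observation that bigness depends only on the first $k$ offers (so $r$ is irrelevant here) is slightly sharper.
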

	
	\begin{proof}
		Our aim is to show that w.h.p.\ the number of big blocks is negligible compared to $n$. A block $A_i$ is called \emph{big of type I} if some vertex $u \in [n]$ appears in $A_i$ at least twice. Similarly, a block $A_i$ is called \emph{big of type II} if $i \in A_i$. Observe that a block is big if and only if it is big of type I or II. For every $1 \leq i \leq n$, let $p_i$ denote the probability that $A_i$ is big of type I. Clearly
		$$
		p_i \leq \binom{|A_i|}{2} \frac{1}{n} \leq \frac{(k+r)^2}{2n}
		$$
		for every $1 \leq i \leq n$. Let $X_1$ denote the number of blocks which are big of type I. Then $X_1$ is stochastically dominated by a random variable $Y_1 \sim \Bin \left(n, \frac{(k+r)^2}{2n} \right)$. Therefore, by Lemma~\ref{Che} we have
		$$
		\Pr[X_1 \geq \log n] \leq \Pr[Y_1 \geq \log n] \leq e^{- \log n} = o(1).
		$$ 
		Thus w.h.p.\ the number of blocks which are big of type I is at most $\log n$.  
		
		An analogous argument shows that w.h.p.\ the number of blocks which are big of type II is at most $\log n$ as well. We conclude that w.h.p.\ there are $o(n)$ big blocks. 
	\end{proof}
	

	
	
	\begin{claim}\label{claim:3kout}
		Let $k$ be a positive integer. Then $H \sim \mathcal G_{k\text{-out}}(n)$ and $H'\sim \mathcal G'_{(k,2)\text{-out}}(n)$ can be coupled	in such a way that w.h.p.\ $H\subseteq H'$.
	\end{claim}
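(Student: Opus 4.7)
The approach is to build the coupling from a single source of randomness at each vertex, then read off both models from it. For each $i \in [n]$ independently, draw an infinite sequence $a_{i,1}, a_{i,2}, \ldots$ of i.i.d.\ uniform samples from $[n]$. Define $A_i$ exactly as in $\mathcal G'_{(k,2)\text{-out}}(n)$: take $A_i = \{a_{i,1}, \ldots, a_{i,k}\}$ (as a multiset) if these $k$ values are all distinct and none equals $i$, and otherwise take $A_i = \{a_{i,1}, \ldots, a_{i,k+2}\}$. In parallel, define $B_i$ to be the set of the first $k$ distinct non-$i$ values occurring in $a_{i,1}, a_{i,2}, \ldots$, which is almost surely well-defined.

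Next I check that $(B_i)_{i=1}^n$ has exactly the law of the $k$-out-neighbor choices in $\mathcal G_{k\text{-out}}(n)$. Since the $n$ sequences are independent, the $B_i$ are independent. Moreover, the law of $(a_{i,j})_{j \geq 1}$ is invariant under every permutation of $[n]$ that fixes $i$, so the induced law of the random subset $B_i$ is invariant under permutations of $[n] \setminus \{i\}$; as $B_i$ is a $k$-subset of $[n] \setminus \{i\}$ almost surely, it is uniformly distributed there. Hence the simple graph $H$ with edges $\{\{i,j\} : j \in B_i\}$ (collapsing repetitions between $B_i$ and $B_j$) has the distribution of $\mathcal G_{k\text{-out}}(n)$, while the multigraph $H'$ read off from the multisets $A_i$ has the distribution of $\mathcal G'_{(k,2)\text{-out}}(n)$ by construction.

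Finally I observe that whenever $A_i \setminus \{i\}$ contains at least $k$ distinct elements, the first $k$ distinct non-$i$ values in the infinite sequence already appear among the $|A_i| \leq k+2$ prefix used to define $A_i$, and so $B_i \subseteq A_i$; in particular every edge of $H$ incident to $i$ appears in $H'$. By Claim~\ref{claim:1kout}, this condition holds simultaneously for all $i \in [n]$ with high probability, yielding $H \subseteq H'$ w.h.p., as required. The only subtle step is the symmetry argument identifying the law of $B_i$; once that is in hand, the containment $H \subseteq H'$ is essentially Claim~\ref{claim:1kout} repackaged.
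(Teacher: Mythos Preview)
Your proof is correct and follows essentially the same coupling as the paper: both generate, for each vertex $i$, an infinite i.i.d.\ uniform sequence on $[n]$, read off the $k$-out model from the first $k$ distinct non-$i$ values and $\mathcal G'_{(k,2)\text{-out}}$ from the length-$k$ or length-$(k+2)$ prefix, and then invoke Claim~\ref{claim:1kout} to conclude that the former sits inside the latter w.h.p. The only cosmetic differences are that the paper phrases things via the oriented digraphs and the stopping index $\ell_i$, and simply asserts the distribution of the $k$-out side where you supply the (correct) symmetry argument.
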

	
	\begin{proof}
		Consider the oriented versions of $\mathcal G_{k\text{-out}}(n)$ and $\mathcal G'_{(k,2)\text{-out}}(n)$, denoted by $\vec{\mathcal G}_{k\text{-out}}(n)$ and by $\vec{\mathcal G}'_{(k,2)\text{-out}}(n)$, respectively. In order to prove the claim it suffices to show that $D \sim \vec{\mathcal G}_{k\text{-out}}(n)$ and $D'\sim \vec{\mathcal G}'_{(k,2)\text{-out}}(n)$ can be coupled in such a way that w.h.p.\ $D \subseteq D'$. 
		
		For every $1 \leq i \leq n$, let $X^{(i)}_1, X^{(i)}_{2}, \ldots$ be an infinite sequence of independent random variables, each having the uniform distribution on $[n]$. We can generate $D \sim \vec{\mathcal G}_{k\text{-out}}(n)$ as follows. For every vertex $1 \leq i \leq n$, let $\ell_i$ denote the smallest integer for which $\left \{X^{(i)}_1, X^{(i)}_{2}, \ldots, X^{(i)}_{\ell_i} \right\}$ contains exactly $k$ different elements from $[n] \setminus \{i\}$. These $k$ elements will be the $k$ out-neighbors of vertex $i$. To generate $D' \sim \vec{\mathcal G}'_{(k,2)\text{-out}}(n)$, for every vertex $1 \leq i \leq n$, we look at $\left\{X^{(i)}_1, X^{(i)}_{2}, \ldots, X^{(i)}_{k} \right\}$; if it contains exactly $k$ different elements from $[n] \setminus \{i\}$, then these $k$ elements will be the $k$ out-neighbors of vertex $i$. Otherwise, the $k+2$ elements of $\{X^{(i)}_1, X^{(i)}_{2}, \ldots, X^{(i)}_{k+2}\}$ will be the out-neighbors of vertex $i$ (as a directed multigraph).
		
		Since w.h.p.\ $\ell_i \leq k+2$ holds for every $1 \leq i \leq n$ by Observation~\ref{obs:kout} and by Claim~\ref{claim:1kout}, it follows that w.h.p.\ $D \subseteq D'$.
	\end{proof}
	
	Combining  Observation~\ref{obs:kout} with Claim~\ref{claim:3kout} implies that $H \sim \mathcal G_{k\text{-out}}(n)$ and $G \sim \mathcal S_{out}(n, k n + 2 n')$ can be coupled in such a way that w.h.p.\ $H \subseteq G$. Since by Claim~\ref{claim:2kout} we have that  $n' = o(n)$, this concludes the proof of Proposition~\ref{pro:Kout}.   
\end{proof}  

It was proved by Bohman and Frieze~\cite{bohman2009hamilton} that w.h.p.\ $G \sim \mathcal G_{\text{3-out}}(n)$ admits a Hamilton cycle (and, moreover, $3$ is the smallest integer for which this holds). Hence, it readily follows from Corollary~\ref{cor:Kout} that w.h.p.\ $\tau(\mathcal{H}, n) \leq (3 + o(1)) n$, where $\mathcal{H}$ is the property of admitting a Hamilton cycle (we would like to thank Michael Krivelevich for pointing this out). Similarly, for every fixed positive integer $k$, we have that w.h.p.\ $\tau(\cd_k,n) \leq (k + o(1)) n$, since the minimum degree of $\mathcal G_{k\text{-out}}(n)$ is at least $k$ (however, this bound is weaker than the result stated in Theorem~\ref{th::OnMinDegk}). In~\cite{frieze1986maximum}, Frieze extended the result of Walkup that we have mentioned in Subsection~\ref{subsec:connections}, by showing that w.h.p.\ $\mathcal G_{2\text{-out}}(n)$ admits a perfect matching, provided that $n$ is even. Combining this result with Corollary~\ref{cor:Kout}, implies that w.h.p.\ $\tau(\mathcal {PM},n) \leq (2 + o(1)) n$, where $\mathcal {PM}$ is the property of admitting a perfect matching. Frieze's result was further improved by Karo{\'n}ski, Overman and Pittel~\cite{KOP}, who proved that w.h.p.\ $\mathcal G_{(1+2e^{-1})\text{-out}}(K_{n,n})$ admits a perfect matching, where $\mathcal G_{(1+2e^{-1})\text{-out}}(K_{n,n})$ is obtained as follows. First, pick a random element of $\mathcal G_{1\text{-out}}(K_{n,n})$ and then give every vertex that has been chosen as a neighbor by at most one other vertex a `second chance' to pick another random neighbor. In the model $\mathcal G_{(1+e^{-1})\text{-out}}(K_{n,n})$, where only vertices that were not chosen at all are getting a
`second chance', w.h.p.\ there is no perfect matching~\cite{KOP}. We will discuss the games
$(\mathcal{H}, n)$ and $(\mathcal{PM}, n)$ further in Section~\ref{sec::openprob}.

In the remainder of this section we briefly explain how to approximate $\mathcal G_{(1+2e^{-1})\text{-out}}(K_{n,n})$ using our semi-random graph process. For simplicity we will restrict our attention to graphs with an even number of vertices.  

\begin{proposition}\label{pro:KoutBip}
	There exists a strategy $\mathcal S'_{out}$ for Builder such that $H \sim \mathcal G_{(1+2e^{-1})\text{-out}}(K_{n/2, n/2})$ and $G \sim \mathcal S'_{out}(n,(1+2e^{-1}+o(1))n)$ can be coupled in such a way that w.h.p.\ $H \subseteq G$. 
\end{proposition}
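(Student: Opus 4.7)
The plan is to adapt the block strategy of Proposition~\ref{pro:Kout} to the bipartite setting, processing the two sides of the partition in parallel over two phases. Let $A = \{1, \ldots, n/2\}$ and $B = \{n/2 + 1, \ldots, n\}$, and fix arbitrary orderings $(a_i)_{i=1}^{n/2}$ of $A$ and $(b_i)_{i=1}^{n/2}$ of $B$. In Phase~1, Builder maintains pointers $p_A, p_B$ initialized to $1$, and on receiving an offering $v$ does the following: if $v \in A$ and $p_B \leq n/2$ he claims the edge $b_{p_B} v$ and increments $p_B$; if $v \in B$ and $p_A \leq n/2$ he claims $a_{p_A} v$ and increments $p_A$; otherwise he claims an arbitrary edge. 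Phase~1 ends once both pointers exceed $n/2$. At that point Builder computes $S_A \subseteq A$ (resp.\ $S_B \subseteq B$), the set of vertices that were picked as an out-neighbor at most once during Phase~1, and then executes Phase~2 by the identical rule but using fixed orderings of $S_A$ and $S_B$ in place of $A$ and $B$.

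For the coupling, I interpret the same random sequence of offerings as generating an element $H$ of $\mathcal G_{(1+2e^{-1})\text{-out}}(K_{n/2, n/2})$: declare the $i$-th Phase-1 $A$-offering to be the (first) out-neighbor of $b_i$, the $i$-th Phase-1 $B$-offering to be the (first) out-neighbor of $a_i$, and analogously assign ``second chance'' out-neighbors to the vertices of $S_A$ and $S_B$ using the Phase-2 offerings. Because the offerings are i.i.d.\ uniform on $[n]$, the $A$-sub-sequence and $B$-sub-sequence of each phase are i.i.d.\ uniform on the corresponding side and independent of one another and of later phases, so $H$ does have the prescribed law. By construction every edge of $H$ is claimed by Builder, so $H \subseteq G$ holds deterministically once both phases complete within the round budget.

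The round-budget analysis has three ingredients. First, Phase~1 terminates once both $\mathrm{Bin}(m, 1/2)$ tallies (the cumulative $A$- and $B$-offerings) reach $n/2$; by Chernoff (Lemma~\ref{Che}), this happens within $n + o(n)$ rounds w.h.p. Second, $|S_A|$ equals $X_0^{n/2}(n/2) + X_1^{n/2}(n/2)$ in the balls-into-bins framework of Section~\ref{sec::BallsBins}, with the $n/2$ Phase-1 $A$-offerings as balls and the $n/2$ vertices of $A$ as bins; its expectation tends to $e^{-1} n$ by \eqref{e:exactly-k}, and it concentrates tightly around its mean by Lemma~\ref{lem::Balls}. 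The same holds for $|S_B|$. Third, Phase~2 finishes once there have been at least $|S_A|$ $B$-offerings and $|S_B|$ $A$-offerings, which by Chernoff requires $2 \max(|S_A|, |S_B|) + o(n) = 2 e^{-1} n + o(n)$ further rounds w.h.p. Summing, $(1 + 2 e^{-1} + o(1)) n$ rounds suffice.

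The main technical point is the concentration of $|S_A|$ and $|S_B|$, which drops straight out of the balls-into-bins tools of Section~\ref{sec::BallsBins}. A minor subtlety is that if the intended reading of the ``second chance'' in the definition of $\mathcal G_{(1+2e^{-1})\text{-out}}$ forbids the second out-neighbor from coinciding with the first, then one must re-draw in the (geometrically rare) coincidence events; this can be absorbed into an additional $o(n)$ rounds and does not alter the final bound.
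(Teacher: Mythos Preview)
Your proof is correct and takes essentially the same approach as the paper: the same two-phase pointer strategy, the same coupling idea, and the same Chernoff/concentration estimates for the phase lengths and for $|S_A|,|S_B|\approx e^{-1}n$. Your definition of $S_A,S_B$ via only the first $n/2$ offerings from each side is in fact slightly cleaner for the exact coupling than the paper's (which counts all Phase-I offerings), and your direct appeal to Lemma~\ref{lem::Balls} in place of an Azuma computation is a cosmetic difference.
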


\begin{proof} 
	Let $[n] = X_0 \cup X_1$ be an arbitrary equipartition; denote $X_0 = \left\{v^{(0)}_1, \ldots, v^{(0)}_{n/2} \right\}$ and $X_1 = \left\{v^{(1)}_1, \ldots, v^{(1)}_{n/2} \right\}$. For $i \in \{0,1\}$ and every positive integer $k$, let $x_i(k)$ denote the number of times a vertex of $X_i$ was offered during the first $k$ rounds; clearly $x_0(k) + x_1(k) = k$. We are now ready to describe Builder's strategy.
	%
	
	\medskip
	
	\noindent \textbf{Strategy $\mathcal S'_{out}$:} For every positive integer $k$, let $v^{(i)}_{j_k}$ (where $i \in \{0,1\}$ and $1 \leq j_k \leq n/2$) denote the vertex Builder is offered in round $k$. The strategy is divided into the following two phases.
	
	\smallskip
	
	\noindent \textbf{Phase I:} Let $f_0(n)$ be the number of rounds until $\min \{x_0(k), x_1(k)\} \geq n/2$ first occurs. For every $1 \leq k \leq f_0(n)$ Builder plays the $k$th round as follows:
	\begin{enumerate}[$(1)$]
		\item If $x_i(k) \leq n/2$, then he connects $v^{(i)}_{j_k}$ to $v^{(1-i)}_{x_i(k)}$ (that is, he connects the vertex he is offered to the $x_i(k)$th vertex from the other part). 
		\item If $x_i(k) > n/2$ but $x_{1-i}(k) < n/2$, then he connects $v^{(i)}_{j_k}$ to $v^{(1-i)}_{1}$.
	\end{enumerate}
	
	\noindent \textbf{Phase II:} At the beginning of this phase, for $i \in \{0,1\}$, let $Y_i$ be the set of vertices of $X_i$ that were offered at most once during Phase I. For $i \in \{0,1\}$ let $y_i = |Y_i|$ and let $Y_i = \left\{u^{(i)}_1, \ldots, u^{(i)}_{y_i} \right\}$. Let $y = \max \{y_0, y_1\}$. For $i \in \{0,1\}$ and every positive integer $k$, let $y_i(k)$ denote the number of times a vertex of $X_i$ was offered during the first $k$ rounds of Phase II. Let $f_1(n)$ be the number of rounds in Phase II until $\min \{y_0(k), y_1(k)\} \geq y$ first occurs. For every $1 \leq k \leq f_1(n)$ Builder plays the $k$th round of Phase II as follows:   
	\begin{enumerate}[$(i)$]
		\item If $y_i(k) \leq y$, then he connects $v^{(i)}_{j_k}$ to $u^{(1-i)}_{y_i(k)}$ (that is, he connects the vertex he is offered to the $y_i(k)$th vertex of $Y_{1-i}$). 
		\item  If $y_i(k) > y$ but $y_{1-i}(k) < y$, then he connects $v^{(i)}_{j_k}$ to $u^{(1-i)}_{1}$.
	\end{enumerate}

	Let $G \sim \mathcal S'_{out}(n, f_0(n) + f_1(n))$ and let $H$ be the graph obtained from $G$ by removing all the edges Builder has claimed in steps $(2)$ and $(ii)$. Then $H \sim \mathcal G_{(1+2e^{-1})\text{-out}}(K_{n/2,n/2})$.
	
	It remains to prove that w.h.p.\ $f_0(n) + f_1(n) = (1 + 2e^{-1} + o(1)) n$. Indeed, for $i \in \{0,1\}$, let $R_i$ denote the number of times a vertex of $X_i$ was offered during the first $n + \sqrt{n \log n}$ rounds of the game. Clearly $R_i \sim \Bin(n + \sqrt{n \log n}, 1/2)$ and thus $\Pr(R_i < n/2) \leq \Pr \left(R_i \leq \mathbb{E}(R_i) - \frac{\sqrt{n \log n}}{2} \right) = o(1)$ holds by Lemma~\ref{Che}. It follows that w.h.p.\ $f_0(n) \leq n + \sqrt{n \log n}$. 
	
	Similarly, for $i \in \{0,1\}$, let $N_i$ denote the number of vertices of $X_i$ that were offered at most once during the first $f_0(n)$ rounds. For every $1 \leq j \leq n/2$, let $I_j$ be the indicator random variable for the event ``$v_j^{(0)}$ was offered at most once during the first $f_0(n)$ rounds''. Then, for every $1 \leq j \leq n/2$, it holds that 
	$$
	\Pr(I_j = 1) = (1 - 1/n)^{f_0(n)} + f_0(n) \cdot 1/n \cdot (1 - 1/n)^{f_0(n)-1} = (2+o(1)) e^{-1}, 
	$$
	where the last equality holds by the concentration result we proved for $f_0(n)$ (note that, by definition, $f_0(n) \geq n$). Therefore
	$$
	\mathbb{E}(N_0) = \sum_{j=1}^{n/2} \mathbb{E}(I_j) = (1+o(1)) e^{-1} n.
	$$
	Observe that changing the offered vertex in any single round of the process can change $N_0$ by at most 1. Hence, applying Theorem~\ref{th::Azuma}, we deduce that 
	$$
	\Pr(|N_0 - \mathbb{E}(N_0)| \geq \sqrt{n \log n}) \leq 2 \exp \left\{- \frac{n \log n}{2 f_0(n)} \right\} = o(1),
	$$ 
	where the last equality holds by the concentration result we proved for $f_0(n)$. An analogous argument shows that $\mathbb{E}(N_1) = (1+o(1)) e^{-1} n$ and that $\Pr(|N_1 - \mathbb{E}(N_1)| \geq \sqrt{n \log n}) = o(1)$. Hence, w.h.p.\ $y = (1 + o(1)) y_i = (1 + o(1)) \mathbb{E}(N_i) = (1+o(1)) e^{-1} n$ for $i \in \{0,1\}$. Finally, an analogous argument to the one we used to prove the concentration of $R_i$, implies that w.h.p.\ $f_1(n) = (2 + o(1)) y = (2+o(1)) e^{-1} n$. We conclude that w.h.p.\ $f_0(n) + f_1(n) = (1 + 2e^{-1} + o(1)) n$ as claimed. 
\end{proof}

Proposition~\ref{pro:KoutBip} implies the following improved upper bound on the duration of the game $(\mathcal{PM}, n)$.

\begin{corollary} \label{cor:KoutBip}
	Assume that w.h.p.\ $H \sim \mathcal G_{(1+2e^{-1})\text{-out}}(n)$ satisfies the monotone increasing graph property $\mathcal P$. Then w.h.p.\ $\tau(\mathcal P, n) \leq (1+2e^{-1}+o(1)) n$. In particular, w.h.p.\ $\tau(\mathcal{PM}, n) \leq (1+2e^{-1}+o(1)) n$.
\end{corollary}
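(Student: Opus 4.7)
The plan is to derive the corollary as an immediate consequence of Proposition~\ref{pro:KoutBip}. That proposition supplies a strategy $\mathcal S'_{out}$ together with a coupling of $H\sim\mathcal G_{(1+2e^{-1})\text{-out}}(K_{n/2,n/2})$ and $G\sim\mathcal S'_{out}(n,(1+2e^{-1}+o(1))n)$ for which w.h.p.\ $H\subseteq G$. Assuming that w.h.p.\ $H$ satisfies the monotone increasing property $\mathcal P$, a union bound on the coupled probability space shows that the events ``$H\subseteq G$'' and ``$H$ satisfies $\mathcal P$'' hold simultaneously w.h.p.; on this intersection, monotonicity of $\mathcal P$ forces $G$ to satisfy $\mathcal P$ as well. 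Consequently, $\mathcal S'_{out}$ is a Builder strategy that produces a graph satisfying $\mathcal P$ in $(1+2e^{-1}+o(1))n$ rounds with high probability, which is precisely the bound $\tau(\mathcal P,n)\leq(1+2e^{-1}+o(1))n$.

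For the ``in particular'' clause, I would take $\mathcal P = \mathcal{PM}$, which is visibly monotone increasing. The Karo\'nski--Overman--Pittel result mentioned just before the corollary asserts that w.h.p.\ $\mathcal G_{(1+2e^{-1})\text{-out}}(K_{n,n})$ admits a perfect matching, so the hypothesis of the first part is satisfied and the bound on $\tau(\mathcal{PM},n)$ follows. There is essentially no obstacle here: all of the real work — constructing the strategy, showing that Phase I uses $(1+o(1))n$ rounds, and showing that Phase II uses $(2e^{-1}+o(1))n$ rounds — was already performed in Proposition~\ref{pro:KoutBip}; the only remaining ingredients are the monotonicity of $\mathcal P$ and the external input from~\cite{KOP}.
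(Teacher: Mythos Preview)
Your argument is correct and matches the paper's approach: the corollary is presented there as an immediate consequence of Proposition~\ref{pro:KoutBip} together with the Karo\'nski--Overman--Pittel result, with no separate proof given. The only cosmetic point is the harmless mismatch between the $\mathcal G_{(1+2e^{-1})\text{-out}}(n)$ in the corollary's statement and the bipartite $\mathcal G_{(1+2e^{-1})\text{-out}}(K_{n/2,n/2})$ in the proposition, which you implicitly handle by applying~\cite{KOP} with the part size $n/2$.
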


\subsection{The min-degree process}
\label{subsec:min_deg_process}

Recall the min-degree graph process. Let $G_{\min}(n, 0)$ be the empty graph with vertex set $[n]$ and, for every $m \geq 0$, let $G_{\min}(n, m+1)$ be obtained from $G_{\min}(n,m)$ by first choosing a vertex $u$ of minimum degree in $G_{\min}(n,m)$ u.a.r., and then connecting it by a new edge to a vertex $v \in [n] \setminus \{u\}$ chosen u.a.r.~among all vertices which are not connected to $u$ by an edge of $G_{\min}(n,m)$.

For proving Proposition~\ref{pro:Gmin}, we will use the following related models $\{G'_{\min}(n,m)\}_{m \geq 0}$ and $\{G''_{\min}(n,m)\}_{m \geq 0}$ which are defined as follows. Let $\{G'_{\min}(n,m)\}_{m \geq 0}$ be the same as $\{G_{\min}(n,m)\}_{m \geq 0}$ except that we allow multiple edges, that is, we choose $v$ u.a.r.~among all vertices of $[n] \setminus \{u\}$. Similarly, $\{G''_{\min}(n,m)\}_{m \geq 0}$ is the same as $\{G_{\min}(n,m)\}_{m \geq 0}$ except that we allow loops and multiple edges, that is, we choose $v$ u.a.r.~among all vertices of $[n]$.  We first prove that our semi-random multigraph process can be used to generate $\{G''_{\min}(n,m)\}_{m \geq 0}$. 

\begin{proposition}\label{pro:G''min}
	There exists a strategy $\mathcal S_{min}$ for Builder such that the probability space $\mathcal S_{min}(n,m)$ is the same as the probability space $G''_{\min}(n,m)$.
\end{proposition}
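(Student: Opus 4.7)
The key observation is that a step of $G''_{\min}(n,m)$ adds an edge $\{u,v\}$ where $u$ is chosen uniformly among the current minimum-degree vertices, $v$ is chosen uniformly on $[n]$, and the two choices are independent. In the semi-random process, Builder sees a uniform vertex \emph{first} and then picks a second endpoint, so it is natural to swap the roles: let the offered vertex play the role of $v$, and let Builder pick $u$. Concretely, the plan is to define $\mathcal S_{min}$ as follows: when offered a vertex $v\in[n]$ in round $i+1$ with current multigraph $G_i$, Builder chooses $u$ uniformly at random from the set $M(G_i)\subseteq[n]$ of vertices of minimum degree in $G_i$, and then adds the edge $\{u,v\}$ (possibly a loop if $u=v$, possibly a multi-edge if $uv\in E(G_i)$).

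I would then argue by induction on $m$ that $\mathcal S_{min}(n,m)$ and $G''_{\min}(n,m)$ are identically distributed. The base case $m=0$ is trivial (both are the empty graph on $[n]$). For the inductive step, it suffices to show that, conditionally on a fixed multigraph $G$ with $m$ edges, the distribution of the edge added in round $m+1$ is the same in both processes. Let $M=M(G)$ and $k=|M|$. In $G''_{\min}$, for any unordered pair $\{a,b\}\subseteq[n]$ with $a\neq b$, the probability that the new edge is $\{a,b\}$ equals $\tfrac{1}{kn}\bigl(\mathbf 1[a\in M]+\mathbf 1[b\in M]\bigr)$, since either $u=a,v=b$ or $u=b,v=a$; and the probability that the new edge is a loop at $a$ equals $\tfrac{1}{kn}\mathbf 1[a\in M]$. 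In the semi-random process under $\mathcal S_{min}$, the offered vertex $v$ is uniform on $[n]$ and, independently, Builder's choice $u$ is uniform on $M$; hence the probability that $\{u,v\}=\{a,b\}$ (respectively, $\{u,v\}=\{a,a\}$) is given by exactly the same expression.

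Since the conditional distribution of the next edge given the current graph is identical in the two processes, a straightforward coupling (or equivalently, an induction on $m$) yields that the full laws on multigraphs with $m$ edges coincide, giving $\mathcal S_{min}(n,m)=G''_{\min}(n,m)$ as probability spaces. There is essentially no obstacle here: the only subtlety worth stating explicitly is that the independence between the two endpoints in the min-degree model (one chosen from $M(G)$, one from $[n]$) together with the fact that $M(G)$ depends only on $G$ and not on $v$, is what allows us to swap the order in which $u$ and $v$ are generated without changing the distribution of the unordered edge.
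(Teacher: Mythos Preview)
Your proof is correct and follows essentially the same approach as the paper: you define the identical strategy $\mathcal S_{\min}$ (connect the offered vertex to a uniformly chosen minimum-degree vertex, with degrees computed before the round) and then match the conditional edge distributions by swapping the roles of the two independent endpoints. Your write-up is in fact more explicit than the paper's, which simply remarks that the two processes pick the pair $(u,v)$ in opposite orders from the same pair of independent distributions.
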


The semi-random process can also be used to approximate $\{G'_{min}(n,m)\}_{m \geq 0}$.

\begin{proposition}\label{pro:G'min}
	If $m = o \left(n^2 \right)$, then the strategy $\mathcal S_{min}$ is such that $H \sim G'_{\min}(n,m)$ and $G \sim \mathcal S_{min}(n, (1+o(1))m)$ can be coupled
	in such a way that w.h.p.\ $H \subseteq G$.
\end{proposition}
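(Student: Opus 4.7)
The plan is to use a strategy $\mathcal S_{min}$ in which Builder, when offered a vertex $v$, picks $u$ uniformly at random among the vertices of minimum \emph{loopless} degree (that is, ignoring loops in the degree count) in his current multigraph, and then adds the edge $uv$ (possibly a loop, when $u=v$). Here $\mathcal S_{min}$ may be interpreted as a mild variant of the strategy from Proposition~\ref{pro:G''min}: the choice to base Builder's pick on loopless degree (rather than total degree) is what will let us decouple the evolution of the non-loop subgraph from the occurrence of loops. Set $m' = (1 + \varepsilon_n) m$ with $\varepsilon_n \to 0$ slowly (say $\varepsilon_n = 1/\log n$).

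The first step is to bound the number of loops $L$ in $G := \mathcal S_{min}(n, m')$. Because $v$ is sampled uniformly from $[n]$ independently of the history and of Builder's choice $u$, a loop occurs with probability exactly $1/n$ per round, independently across rounds, so $L \sim \Bin(m', 1/n)$ with $\E[L] = m'/n$. Since $m = o(n^2)$, this expectation is $o(m)$; applying the Chernoff bound (Lemma~\ref{Che}) when $m/n \to \infty$ and Markov's inequality otherwise, we obtain $L \leq \varepsilon_n m$ w.h.p.

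The second step is the coupling itself. The key structural observation is that under this strategy the submultigraph $G^\circ$ consisting of the non-loop edges of $G$ is distributed exactly as $G'_{\min}(n, m' - L)$. Indeed, conditional on a round being non-loop (i.e., $v \neq u$), Builder picks $u$ uniformly among the vertices of minimum loopless degree, which by definition coincide with the vertices of minimum degree in $G^\circ$, and then $v$ is uniform on $[n] \setminus \{u\}$ (being uniform on $[n]$ conditional on $v \neq u$); this matches exactly the transition rule of $G'_{\min}$. Loop rounds leave $G^\circ$ unchanged. Consequently, on the event $L \leq \varepsilon_n m$, there are at least $m' - L \geq m$ non-loop rounds, so the first $m$ non-loop edges of $G$ form a realization of $G'_{\min}(n, m)$, yielding the desired coupling with $H \subseteq G$.

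The main subtlety is the correct formulation of Builder's strategy: the use of loopless degree rather than total degree is what permits the clean embedding argument, because it ensures that a loop is a true ``no-op'' for the non-loop projection $G^\circ$. Were one instead to insist that Builder uses total degree (as in Proposition~\ref{pro:G''min}), the coupling would be considerably more delicate: each loop would perturb the min-degree structure of $G$ relative to $G^\circ$, causing the natural shared-randomness coupling of $G''_{\min}$ and $G'_{\min}$ to drift; one would then need to argue that the cumulative drift caused by the $L = o(m)$ loops affects only $o(m)$ rounds and can therefore be absorbed into the extra $\varepsilon_n m$ budget. In either interpretation, the crucial ingredient is the bound $L = o(m)$ w.h.p.
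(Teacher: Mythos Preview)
Your proof is correct and follows essentially the same approach the paper intends. The paper does not give a standalone proof of this proposition; it only remarks that ``a similar and, in fact slightly simpler process'' than the one used for Proposition~\ref{pro:Gmin} works here, the simplification being that only loops (not multiple edges) count as failures. Your argument is precisely that: run the min-degree strategy, treat loop rounds as wasted, and show there are $o(m)$ of them so that the $(1+o(1))m$ budget absorbs the waste.

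Where you are a bit more careful than the paper is in two places. First, you make explicit that Builder should base his choice on \emph{loopless} degree, so that a loop round is genuinely a no-op for the projected graph $G^\circ$; this is exactly the kind of tacit modification the paper makes when it says Builder ``ignores'' a failure edge in the proof of Proposition~\ref{pro:Gmin}. Second, you observe that with this convention the loop indicator each round has probability exactly $1/n$ independent of the history (since $u$ depends only on $G^\circ$ while $v$ is uniform and independent), giving $L\sim\Bin(m',1/n)$ and making the non-loop edge sequence distributionally identical to $G'_{\min}$. The paper's corresponding step just bounds the expected number of failures and applies Markov; your sharper distributional statement is a nice touch but not needed---Markov alone already gives $L=o(m)$ w.h.p., so the Chernoff case split is unnecessary.

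Your final paragraph correctly identifies the subtlety that would arise if one insisted on the total-degree version of $\mathcal S_{\min}$ from Proposition~\ref{pro:G''min}; the paper sidesteps this in the same way you do, by quietly adapting the strategy so that failures do not perturb the degree bookkeeping.
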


The following are immediate corollaries of Propositions~\ref{pro:G''min} and~\ref{pro:G'min}.

\begin{corollary}\label{cor:G''min}
	Let $m_{\mathcal P}$ be a positive integer for which w.h.p.\ $H \sim G''_{min}(n,m_{\mathcal P})$ satisfies the monotone increasing graph property $\mathcal P$. Then w.h.p.\ $\tau(\mathcal P, n) \leq m_{\mathcal P}$.
\end{corollary}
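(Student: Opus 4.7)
The plan is to deduce the corollary directly from Proposition~\ref{pro:G''min}. That proposition produces a Builder strategy $\mathcal S_{min}$ for which the distribution of $\mathcal S_{min}(n,m)$ coincides exactly with that of $G''_{\min}(n,m)$, so all that remains is to instantiate it at $m = m_{\mathcal P}$ and transport the hypothesis on $\mathcal P$ through the distributional identity.

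Concretely, I would proceed as follows. Let Builder follow $\mathcal S_{min}$ for $m_{\mathcal P}$ rounds, yielding $G \sim \mathcal S_{min}(n, m_{\mathcal P})$. By Proposition~\ref{pro:G''min}, $G$ and $H \sim G''_{\min}(n, m_{\mathcal P})$ have the same law. The hypothesis on $m_{\mathcal P}$ is precisely that $H$ satisfies $\mathcal P$ with probability $1 - o(1)$, so the same bound holds for $G$. By the definition of $\tau_p(\mathcal S)$ this gives $\tau_{1-o(1)}(\mathcal S_{min}) \leq m_{\mathcal P}$, and then by the definition of $\tau_p(\mathcal P, n)$ as the minimum over strategies, $\tau_{1-o(1)}(\mathcal P, n) \leq m_{\mathcal P}$, which in the paper's notational convention is exactly the statement that w.h.p.\ $\tau(\mathcal P, n) \leq m_{\mathcal P}$.

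There is essentially no obstacle to overcome: all of the real work is packaged inside Proposition~\ref{pro:G''min}, and the distributional equality there leaves no slack that has to be absorbed, in contrast to Corollary~\ref{cor:Gmin} (where the approximation of $G_{\min}$ loses $(1+o(1))$ extra rounds) or Corollary~\ref{cor:Kout} (where one only has a containment coupling and must invoke monotonicity of $\mathcal P$ to transfer the property). In the present statement even monotonicity of $\mathcal P$ is not logically required for the implication, although it is consistent with the paper's standing assumptions on the properties under consideration.
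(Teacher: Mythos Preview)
Your proposal is correct and matches the paper's approach: the paper states that this is an immediate corollary of Proposition~\ref{pro:G''min}, and your argument is precisely the natural unpacking of that. The distributional equality from Proposition~\ref{pro:G''min} transfers the w.h.p.\ statement directly, exactly as you describe.
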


\begin{corollary}\label{cor:G'min}
	Let $m_{\mathcal P}$ be a positive integer for which w.h.p.\ $H \sim G'_{min}(n,m_{\mathcal P})$ satisfies the monotone increasing graph property $\mathcal P$. If $m_{\mathcal P} = o \left(n^2 \right)$, then w.h.p.\ $\tau(\mathcal P, n) \leq (1+o(1)) m_{\mathcal P}$.
\end{corollary}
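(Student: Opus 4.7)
The statement is labeled as an immediate corollary of Proposition~\ref{pro:G'min}, so the plan is essentially to chain together three facts: (i) Proposition~\ref{pro:G'min} provides a coupling, (ii) monotonicity of $\mathcal P$ transfers the property upward along subgraph containment, and (iii) the definition of $\tau(\mathcal P,n)$ then yields the desired upper bound.

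Concretely, I would first apply Proposition~\ref{pro:G'min} with the given $m_{\mathcal P}$ (which satisfies $m_{\mathcal P} = o(n^2)$) to obtain the strategy $\mathcal S_{min}$ together with a coupling of $H \sim G'_{\min}(n, m_{\mathcal P})$ and $G \sim \mathcal S_{min}(n,(1+o(1))m_{\mathcal P})$ under which w.h.p.\ $H \subseteq G$. By hypothesis, w.h.p.\ $H$ satisfies $\mathcal P$; intersecting these two w.h.p.\ events (both of which hold simultaneously under the coupling) and using the fact that $\mathcal P$ is monotone increasing, we get that w.h.p.\ $G$ also satisfies $\mathcal P$.

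Thus, if Builder follows the strategy $\mathcal S_{min}$ for $(1+o(1))m_{\mathcal P}$ rounds, his resulting multigraph satisfies $\mathcal P$ with probability $1-o(1)$. By the definition of $\tau(\mathcal P,n)$ as the minimum over all strategies of the number of rounds needed to satisfy $\mathcal P$ with high probability, this immediately gives $\tau(\mathcal P,n) \leq (1+o(1))m_{\mathcal P}$ w.h.p., as claimed. There is no real obstacle here: the entire content of the argument is packaged inside Proposition~\ref{pro:G'min}, and the corollary is a one-line deduction from monotonicity and the definition of $\tau$.
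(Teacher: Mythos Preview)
Your proposal is correct and matches the paper's approach exactly: the paper gives no explicit proof, simply declaring Corollary~\ref{cor:G'min} an immediate consequence of Proposition~\ref{pro:G'min}, and your write-up spells out precisely the intended one-line deduction via the coupling, monotonicity of $\mathcal P$, and the definition of $\tau(\mathcal P,n)$.
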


\begin{proof}[Proof of Proposition~\ref{pro:G''min}] 
The strategy $\mathcal S_{\min}$ used by Builder is the following.
 
\textbf{Strategy $\mathcal S_{min}$:}	Whenever Builder is offered some vertex $v$, he connects it to a vertex $u$, chosen u.a.r.~among all vertices of minimum degree (observe that this could result in loops and multiple edges). For this purpose, the degree of $v$ in Builder's graph is computed \textit{before} it is offered.

	Fix a non-negative integer $r$ (that may depend on $n$), an arbitrary multigraph $G$ with vertex set $[n]$ and $r$ edges, and arbitrary indices $1 \leq i, j \leq n$. In order to prove that our process generates $\{G''_{\min}(n,m)\}_{m \geq 0}$, it suffices to prove that the probability of $ij$ being added to $G$ in round $r+1$ of our process is equal to the probability of $ij$ being added to $G$ in round $r+1$ of $\{G''_{\min}(n,m)\}_{m \geq 0}$. 
	
	Indeed, in $\{G''_{\min}(n, m)\}$ the first vertex of the added edge is picked uniformly at random from those vertices that have minimum degree before the round, and the second vertex is picked u.a.r. from all vertices. On the other hand, in our process the offered vertex is picked u.a.r. from all vertices, while the second vertex is picked u.a.r. from the vertices that had minimum degree before the round. The statement follows.
\end{proof}

We are now ready to prove Proposition~\ref{pro:Gmin}.
Note that a similar and, in fact slightly simpler process, can be used to approximate $\{G'_{\min}(n,m)\}_{m \geq 0}$ (thus proving Proposition~\ref{pro:G'min}). 

\begin{proof}[Proof of Proposition~\ref{pro:Gmin}]
	We apply $\mathcal S_{\min}$ as described in Subsection \ref{subsec:connections}. Whenever Builder claims a loop or a multiple edge, he ignores this edge and considers this round to be a failure. We continue running this process until there are $m$ edges in Builder's graph which are not failures.
	That is, for every $m$ we will be able to generate $G_{\min}(n,m)$ by $\mathcal S_{\min}(n,m+f(m))$, where $f(m)$ is the number of failures. We now prove that, if $m = o(n^2)$, then w.h.p. $f(m) = o(m)$. Consider a specific round of our process which starts with a (simple) graph $G$ with $r$ edges. Let $j$ denote the second vertex we choose in this round. By the description of our process, we must have $d_G(j) = \delta(G) \leq 2r/n$. Hence, the probability that the first vertex we choose in this round is $j$ or one of its neighbors in $G$ is at most $\frac{1 + 2r/n}{n} = \frac{1}{n} + \frac{2r}{n^2}$. For every positive integer $m$, let $Y_m$ denote the random variable which counts the number of failures that occur during the first $m$ rounds of running our process according to $\mathcal S_{\min}$. Then
	$$
	\mathbb{E}(Y_m) \leq \frac{m}{n} + \sum_{r=1}^m \frac{2r}{n^2} = O(m/n + m^2/n^2) = o(m),
	$$
	where the last equality holds by our assumption that $m = o(n^2)$. Applying Markov's inequality to $Y_m$, we conclude that indeed w.h.p. the number of failures is $o(m)$.
\end{proof}

\section{Offline games} \label{sec:off}
	
	Theorems~\ref{thm:OffFix} and~\ref{th::MinDegkOffline} (as well as a few other results which will be discussed in Section~\ref{sec::openprob}) are consequences of a general result. Before stating it, we need to introduce some notation.
	For a directed graph $D$ with vertex set $\{v_1, \ldots, v_r\}$, let $d_i^+$ denote the out-degree of $v_i$ in $D$ for every $1 \leq i \leq r$. For a given sequence $S=\{v_i\}_{i\in \mathbb{N}}$, let $m(D)$ denote the smallest integer $j$ such that  in the subsequence $S'=(v_1,v_2,\dots,v_j)$ there are $r$ distinct vertices $u_1, \ldots, u_r \in [n]$ so that for every $1 \leq i \leq r$, $u_i$ appears at least $d_i^+$ times in $S'$. For an undirected graph $H$, let $m(H) = \min \{m(D) : D \textrm{ is an orientation of } H\}$.  
	
	\begin{proposition}\label{prop:FindingH}
		Let $H$ be a graph on at most $n$ vertices, let $S=\{v_i\}_{i\in \mathbb{N}}$ be a sequence of vertices from $[n]$, chosen independently and uniformly at random with replacement, and let $\mathcal P_H$ be the graph property of containing $H$ as a subgraph. Then $\tau'(\mathcal{P}_H, n) = m(H)$.	
	\end{proposition}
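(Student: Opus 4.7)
The plan is to establish the equality $\tau'(\mathcal{P}_H, n) = m(H)$ as a pointwise identity for every realization of the random sequence $S = (v_1, v_2, \ldots)$: the minimum number of rounds in which Builder (who sees $S$ in advance) can produce a copy of $H$ is exactly $m(H)$. I will prove the two inequalities separately.

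For the upper bound $\tau'(\mathcal{P}_H, n) \leq m(H)$, I would fix an orientation $D$ of $H$ achieving $m(D) = m(H)$, label $V(D) = \{x_1, \ldots, x_r\}$ with out-degrees $d_1^+, \ldots, d_r^+$, and fix for each $x_i$ an arbitrary ordering of its out-neighbors in $D$. By the definition of $m(D)$, the prefix $S' = (v_1, \ldots, v_{m(H)})$ contains distinct vertices $u_1, \ldots, u_r \in [n]$ such that each $u_i$ appears at least $d_i^+$ times in $S'$. Builder's strategy is to identify $x_i$ with $u_i$ and, whenever $u_i$ is offered for the $k$-th time with $k \leq d_i^+$, to connect it to $u_j$, where $x_j$ is the $k$-th out-neighbor of $x_i$ in $D$; any remaining rounds are handled arbitrarily. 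Since the out-neighbors of each $x_i$ in $D$ are distinct, the $d_i^+$ edges planted at $u_i$ are pairwise distinct, and together these edges form a copy of the underlying graph of $D$, i.e., a copy of $H$, on $\{u_1, \ldots, u_r\}$.

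For the lower bound $\tau'(\mathcal{P}_H, n) \geq m(H)$, I would consider any strategy under which Builder's graph after $j$ rounds contains a copy $H^*$ of $H$, say on the vertex set $\{u_1, \ldots, u_r\} \subseteq [n]$. For each edge $e$ of $H^*$, pick one of the (possibly several) rounds in which $e$ was added, and orient $e$ away from whichever endpoint was offered in that round. This defines an orientation $D$ of $H$. If $x_i \in V(D)$ (corresponding to $u_i$) has out-degree $d_i^+$, then $u_i$ must have been offered at least $d_i^+$ times among $v_1, \ldots, v_j$, since each of its oriented out-edges was placed during a distinct appearance of $u_i$. Hence the prefix $(v_1, \ldots, v_j)$ witnesses the requirement defining $m(D)$, so $j \geq m(D) \geq m(H)$.

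No serious obstacle is anticipated; the argument is essentially a direct translation of the definitions of $m(D)$ and $m(H)$. The two details worth tracking are, for the upper bound, that distinct out-neighbors of $x_i$ in $D$ yield distinct edges at $u_i$ (so the constructed subgraph really is a copy of $H$, not a multigraph collapse), and, for the lower bound, the bookkeeping observation that each edge of $H^*$ is placed at a moment when exactly one of its endpoints is being offered, so out-degrees in the induced orientation are bounded by the multiplicities with which each $u_i$ appears in the prefix of $S$.
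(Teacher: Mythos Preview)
Your proposal is correct and follows essentially the same approach as the paper's proof: orient edges from the offered vertex to the chosen vertex to derive the lower bound, and use an optimal orientation $D$ together with the witnesses $u_1,\ldots,u_r$ to describe Builder's strategy for the upper bound. The only differences are expository; in particular, your remark that distinct out-neighbors of $x_i$ yield distinct edges at $u_i$ makes explicit a point the paper leaves implicit.
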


\begin{proof}[Proof of Proposition \ref{prop:FindingH}]
	Starting with the lower bound, suppose that Builder has a strategy $\mathcal{S}$ to construct a copy of $H$ in $\ell$ rounds. During the game, played according to $\mathcal{S}$, orient each edge claimed by Builder from the vertex he was offered to the vertex he chose. Fix some copy of $H$ in $G_{\ell}$ and let $D'$ be its orientation according to the aforementioned rule. Then $\ell \geq m(D') \geq m(H)$.
	
	As for the upper bound, we will describe a strategy for Builder to construct a copy of $H$ in $m(H)$ rounds. Fix an arbitrary orientation $D$ of $H$ such that $m(D) = m(H)$. Let $\{v_1, \ldots, v_r\}$ denote the vertex set of $H$ and let $u_1, \ldots, u_r$ be vertices in $[n]$ such that $\off(u_i) \geq d_D^+(v_i)$ for every $1 \leq i \leq r$. For every $1 \leq i \leq r$ and $1 \leq j \leq d_D^+(v_i)$, let $v_{i,1}, \ldots, v_{i,d^+(v_i)}$ denote the out-neighbors of $v_i$ in $D$. Let $\varphi$ be the function which maps $v_i$ to $u_i$ for every $1 \leq i \leq r$. In every round, if there exist $1 \leq i \leq r$ and $1 \leq j \leq d_D^+(v_i)$ such that the vertex offered in this round is $u_i$ and this is the $j$th time it is offered, then Builder claims the edge $(u_i, \varphi(v_{i,j}))$, if it is free. In any other case, he plays arbitrarily. It is evident that, if Builder follows this strategy, then, after $m(H)$ rounds, $G_m[\{u_1, \ldots, u_r\}]$ contains a copy of $H$.
\end{proof}


\subsection{Offline fixed graphs} \label{subsec::OffFixGraph}

\begin{proof}[Proof of Theorem~\ref{thm:OffFix}]
	Let $r, f$, and $g$ be as in the statement of the theorem. It follows by Proposition~\ref{prop:FindingH} that $m(H)=\tau'(\mathcal{P}_H, n)$ and so it remains to prove that w.h.p.\ $f(n) \cdot n^{(r-1)/r} \leq m(H) \leq g(n) \cdot n^{(r-1)/r}$. Fix some $m < f(n) \cdot n^{(r-1)/r}$. The expected number of vertices which were offered at least $r$ times during the first $m$ rounds is
	\begin{eqnarray} \label{eq::fnMarkov}
	 \sum_{k = r}^m f(n,m,k) &=& \sum_{k = r}^m n \binom{m}{k} \left(\frac{1}{n} \right)^k \left(1 - \frac{1}{n} \right)^{m-k}
	\leq \sum_{k = r}^m \left(\frac{e}{k} \right)^k \frac{m^k}{n^{k-1}} \nonumber \\
	&\leq& 3 \sum_{k = r}^m (f(n))^k \cdot n^{(k(r-1)/r) - k + 1} \leq 3 (f(n))^r + \sum_{k = r+1}^m n^{1 - k/r} = o(1).
	\end{eqnarray}
	
	Since every orientation of $H$ contains a vertex of out-degree at least $r$, it follows from~\eqref{eq::fnMarkov} and from Markov's inequality that w.h.p. $m(H) > m$.
	
	Next, set $m = g(n) \cdot n^{(r-1)/r}$. Clearly we can assume that $g(n) \leq \log n$ and thus, in particular, $m = o(n^{r/(r+1)})$. Hence, a simple calculation which is very similar to the one in~\eqref{eq::fnMarkov}, shows that w.h.p. no vertex was offered more than $r$ times during the first $m$ rounds. On the other hand, the expected number of vertices which were offered exactly $r$ times is
	$$
	f(n,m,r) = (1 + o(1)) \frac{e^{- m/n}}{r!} \cdot \frac{m^r}{n^{r-1}} \geq C (g(n))^r = \omega(1),
	$$
	where the first equality holds by~\eqref{e:exactly-k} from Subsection~\ref{sec::BallsBins} and $C$ is some constant which depends on $r$. Therefore, by Lemma~\ref{lem::BallsTalagrand}, w.h.p. at least $v(H)$ vertices were offered exactly $r$ times each. Hence, $m(H) \leq m$ as claimed. \end{proof}

\subsection{Offline Minimum Degree $k$}

\begin{proof}[Proof of Theorem~\ref{th::MinDegkOffline}]
For a positive integer $k$ and a vertex $v \in V(K_n)$, at any point during the game $({\cd}_k, n)$,
let $\off_k(v) = \min \{2 \off(v), k + \off(v)\}$, where $\off(v)$ is the number of times $v$ was offered up to that point. The idea behind this definition of $\off_k(v)$ is that when Builder claims an edge which is incident with $v$, he advances the sum of degrees in his graph towards having minimum degree $k$ by $2$ if $d(v) < k$, and by $1$ otherwise (in both cases, this is only if he chooses the other endpoint of this edge wisely). Therefore, this parameter allows us to track the evolution of Builder's graph and show that, in the offline version of the game $({\cd}_k, n)$, Builder can construct a graph with minimum degree $k$ as soon as $\sum_{v \in V(K_n)} \off_k(v) \geq k n$ first occurs, but not sooner. We begin by stating and proving the following two simple auxiliary claims.

\begin{claim} \label{cl::Yk}
Let $Y_k^r = \sum_{i=0}^{k-1} (k - i) X_i^r$, where $X_i^r$ is the random variable which counts the number of vertices that were offered precisely $i$ times during the first $r$ rounds. Then $Y_k^r \leq r$ if and only if $\sum_{v \in V(K_n)} \off_k(v) \geq k n$.
\end{claim}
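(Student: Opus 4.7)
The plan is to reduce the claim to a direct algebraic identity involving $\off(v)$, by unpacking both $Y_k^r$ and $\sum_v \off_k(v)$ in terms of the raw offer counts $\off(v)$. First I would rewrite $Y_k^r$ by regrouping its definition vertex-wise: since $X_i^r$ counts vertices with $\off(v)=i$ and the coefficient $(k-i)$ is precisely what each such vertex contributes, one gets
\begin{equation*}
Y_k^r = \sum_{v \in V(K_n),\ \off(v) < k} \bigl(k - \off(v)\bigr).
\end{equation*}

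Next, I would split the vertex set into $A = \{v : \off(v) < k\}$ and $B = \{v : \off(v) \geq k\}$, and use the definition of $\off_k(v) = \min\{2\off(v), k+\off(v)\}$, which gives $\off_k(v) = 2\off(v)$ on $A$ and $\off_k(v) = k + \off(v)$ on $B$. Writing $a = \sum_{v \in A} \off(v)$ and $b = \sum_{v \in B} \off(v)$, and noting that each of the $r$ rounds offered exactly one vertex so $a + b = r$, a short computation gives
\begin{equation*}
\sum_{v \in V(K_n)} \off_k(v) = 2a + k|B| + b = r + a + k|B| = r + a + k(n - |A|).
\end{equation*}

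From here the equivalence follows by rearrangement: the condition $\sum_v \off_k(v) \geq kn$ becomes $k|A| - a \leq r$, and since $k|A| - a = \sum_{v \in A}(k - \off(v)) = Y_k^r$, this is exactly $Y_k^r \leq r$.

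I do not expect any real obstacle here; the only mild care needed is handling the boundary case $\off(v) = k$ consistently (both expressions in the $\min$ give $2k$, so placing such vertices in $A$ or $B$ makes no difference), and keeping the bookkeeping straight between "number of vertices" and "total offer count" when using $\sum_v \off(v) = r$. The proof will be a short two- or three-line calculation once the splitting into $A$ and $B$ is set up.
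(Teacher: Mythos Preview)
Your proposal is correct and follows essentially the same approach as the paper: both establish the identity $\sum_{v} \off_k(v) = r + kn - Y_k^r$ by splitting according to whether $\off(v)$ is below $k$ or not, and then the equivalence is immediate. The only cosmetic difference is that the paper organizes the computation via the count variables $X_i^r$ (grouping vertices by their exact offer count $i$), whereas you group vertices directly into the sets $A$ and $B$; your version is arguably a touch cleaner since $A$ aligns exactly with the summation range in $Y_k^r$.
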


\begin{proof}
Our claim readily follows from the following calculation:
\begin{align*}
\sum_{v \in V(K_n)} \off_k(v) &= \sum_{\stackrel{v \in V(K_n)}{\off(v) \leq k}} 2 \off(v) + \sum_{\stackrel{v \in V(K_n)}{\off(v) > k}} (k + \off(v)) \\
&= 2 \sum_{i=0}^k i X_i^r + k \sum_{i = k+1}^r X_i^r + \sum_{i = k+1}^r i X_i^r \\
&= \sum_{i=0}^r i X_i^r + k \sum_{i=0}^r X_i^r - \sum_{i=0}^{k-1} (k - i) X_i^r \\
&= r + k n - Y_k^r.
\end{align*}
\end{proof}

\begin{claim} \label{cl::dk}
For a vertex $v \in V(K_n)$, at any point during the process, let $d_k(v) = \min \{d(v), k\}$. Then $\sum_{v \in V(K_n)} \off_k(v) \geq \sum_{v \in V(K_n)} d_k(v)$ holds at any point during the process.
\end{claim}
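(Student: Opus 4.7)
The plan is to reduce the claimed inequality to a pointwise comparison between $d(v)$ and $\off(v)$. Splitting on whether $\off(v) \leq k$ (respectively $d(v) \leq k$), one directly obtains the closed forms
$$\off_k(v) \;=\; 2\,\off(v) - (\off(v)-k)^+, \qquad d_k(v) \;=\; d(v) - (d(v)-k)^+,$$
where $x^+ := \max\{x,0\}$. Summing over $v$ and using the two identities $\sum_v \off(v) = m$ and $\sum_v d(v) = 2m$ (after $m$ rounds, each round offers exactly one vertex and adds exactly one edge, counting a loop as contributing $2$ to the degree of its endpoint), the claim reduces to showing
$$\sum_{v \in V(K_n)} (d(v)-k)^+ \;\geq\; \sum_{v \in V(K_n)} (\off(v)-k)^+.$$

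The crucial observation is the pointwise inequality $d(v) \geq \off(v)$, which holds for every vertex $v$ at every stage of the process. Indeed, each time $v$ is offered, Builder claims an edge incident to $v$, so $d(v)$ grows by at least as much as $\off(v)$; additionally, $d(v)$ may grow when $v$ is picked without being offered. Since $d(v) = \off(v) = 0$ initially, a trivial induction on the number of rounds yields $d(v) \geq \off(v)$ throughout. Monotonicity of $x \mapsto (x-k)^+$ then gives the pointwise bound $(d(v)-k)^+ \geq (\off(v)-k)^+$, and summing over $v$ completes the argument.

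I expect essentially no obstacle here: the entire content of the claim is the simple fact that every offering of a vertex $v$ contributes to its degree. The only real step is identifying the correct telescoping of $\off_k$ and $d_k$ as ``total mass minus excess above $k$''; after that, the pointwise comparison is immediate and the inequality follows with room to spare (the slack is exactly the total excess $\sum_v (d(v)-\off(v))$ clipped at $k$).
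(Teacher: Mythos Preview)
Your proof is correct and takes a genuinely different route from the paper's. The paper argues by induction on the number of rounds, checking in each round that the increment of $\sum_v \off_k(v)$ is at least the increment of $\sum_v d_k(v)$ (splitting on whether the offered vertex already has degree $\geq k$). Your argument is global rather than incremental: you rewrite both sides as $2m$ minus an ``excess above $k$'' term, so the whole claim collapses to the pointwise inequality $(d(v)-k)^+ \geq (\off(v)-k)^+$, which follows immediately from $d(v) \geq \off(v)$.

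This is cleaner and more informative: it isolates $d(v) \geq \off(v)$ as the single nontrivial fact, and it makes the slack in the inequality explicit. The paper's induction, by contrast, hides this pointwise comparison inside the case analysis (and in fact its case split is phrased in terms of $d(u)$ rather than $\off(u)$, which only works because $d(u) \geq \off(u)$ is being used implicitly). One small remark: your identity $\sum_v d(v) = 2m$ uses the convention that a loop contributes $2$ to the degree; if one instead counts a loop once, then $\sum_v d(v) \leq 2m$, but since $d(v) \geq \off(v)$ still holds pointwise, your final inequality $\sum_v (d(v)-k)^+ \geq \sum_v (\off(v)-k)^+$ is unaffected and the conclusion still follows.
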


\begin{proof}
We will prove the claim by induction on the number of rounds in the process. It is clearly true at the beginning of the process when $\sum_{v \in V(K_n)} \off_k(v) = \sum_{v \in V(K_n)} d_k(v) = 0$. Assume that $\sum_{v \in V(K_n)} \off_k(v) \geq \sum_{v \in V(K_n)} d_k(v)$ holds immediately after the $i$th round of the process, for some $i \geq 0$, and consider the $(i+1)$st round. Let $u$ be the vertex Builder is offered in the $(i+1)$st round and let $v$ be the vertex he connects to $u$ in this round. If $u$ has degree at least $k$ in the beginning of the $(i+1)$st round, then $\off_k(u)$ is increased by $1$ and $\off_k(x)$ is unchanged for every $x \in V(K_n) \setminus \{u\}$. Moreover, regardless of Builder's strategy, $d_k(v)$ is increased by at most $1$ and $d_k(x)$ is unchanged for every $x \in V(K_n) \setminus \{v\}$. Similarly, if $u$ has degree at most $k-1$ in the beginning of the $(i+1)$st round, then $\off_k(u)$ is increased by $2$ and $\off_k(x)$ is unchanged for every $x \in V(K_n) \setminus \{u\}$. Moreover, regardless of Builder's strategy, $d_k(u)$ is increased by $1$, $d_k(v)$ is increased by at most $1$, and $d_k(x)$ is unchanged for every $x \in V(K_n) \setminus \{u,v\}$. In both cases $\sum_{v \in V(K_n)} \off_k(v) \geq \sum_{v \in V(K_n)} d_k(v)$ holds immediately after the $(i+1)$st round.
\end{proof}

Returning to the proof of Theorem~\ref{th::MinDegkOffline}, assume first that $\sum_{v \in V(K_n)} \off_k(v) < k n$. It follows from Claim~\ref{cl::dk} that $\sum_{v \in V(K_n)} d_k(v) \leq \sum_{v \in V(K_n)} \off_k(v) < k n$. Clearly this means that the minimum degree in Builder's graph is strictly less than $k$.

Assume now that $\sum_{v \in V(K_n)} \off_k(v) \geq k n$ holds after $r$ rounds of the process. We will show that Builder has a strategy to ensure that the minimum degree in his graph will be at least $k$. For every $1 \leq i \leq r$, let $u_i$ denote the vertex which Builder was offered in the $i$th round. Immediately after the $r$th round, for every $0 \leq j \leq k-1$ let $L_j$ denote the set of vertices which were offered exactly $j$ times. We will construct $k$ bipartite graphs $H_1 = (A_1 \cup B_1, E_1), \ldots, H_k = (A_k \cup B_k, E_k)$. Moreover, for every $1 \leq j \leq k$, we will construct a matching $M_j$ in $H_j$ which saturates $B_j$. This is done recursively as follows. $A_1 = \{u_1, \ldots, u_r\}$ (note that $A_1$ is a multiset of size $r$ in which every $v \in V(K_n)$ appears precisely $\off(v)$ times), $B_1 = L_0 \cup \ldots \cup L_{k-1}$ and $E_1 = \{uv : u \in A_1, v \in B_1 \textrm{ and } u \neq v\}$. Note that $|B_1| = \sum_{j=0}^{k-1} |L_j| = \sum_{j=0}^{k-1} X_j^r \leq Y_k^r \leq r = |A_1|$, where the last inequality holds by Claim~\ref{cl::Yk}. Let $u \in A_1$ and $v \in B_1$ be arbitrary vertices and let $0 \leq j \leq k-1$ be the unique integer such that $v \in L_j$. By definition, if $uv \notin E_1$, then $u = v$. Since $v \in L_j$, it follows that $d_{H_1}(v) = |A_1| - j \geq |A_1| - k$. Similarly, $d_{H_1}(u) \geq |B_1| - 1$. Since, moreover, $r \gg k$ for sufficiently large $n$, a straightforward application of Hall's Theorem shows that $H_1$ has a matching which saturates $B_1$; let $M_1$ be such a matching chosen arbitrarily. Assume we have already constructed $H_1, \ldots, H_i$ and $M_1, \ldots, M_i$ for some $1 \leq i < k$. Let $Z_i$ denote the set of vertices of $A_i$ that were matched in $M_i$ and let $A_{i+1} = A_i \setminus Z_i$ (again, $A_{i+1}$ is a multiset and so, if some vertex appears $\ell_1$ times in $A_i$ and $\ell_2$ in $Z_i$, then it will appear $\ell_1 - \ell_2$ times in $A_{i+1}$). Let $B_{i+1} = L_0 \cup \ldots \cup L_{k-i-1}$. Let $F_i^1 = \{uv \in E_i : u \in Z_i \textrm{ or } v \in L_{k-i}\}$ and let $F_i^2 = \{uv \in E_i : \exists u' \in Z_i \textrm{ such that } u'v \in M_i \textrm{ and } u' \textrm{ is a copy of } u\}$. Finally, let $E_{i+1} = E_i \setminus (F_i^1 \cup F_i^2)$. It remains to prove that $H_{i+1}$ has a matching which saturates $B_{i+1}$. We first claim that $|B_{i+1}| \leq |A_{i+1}|$. Indeed
\begin{eqnarray*}
|A_{i+1}| - |B_{i+1}| &=& |A_1| - \sum_{j=1}^i |M_j| - |B_{i+1}| = r - \sum_{j=1}^{i+1} |B_j| \\
&=& r - \sum_{j=1}^i j |L_{k-j}| - (i+1) \sum_{j=i+1}^k |L_{k-j}| \geq r - \sum_{j=0}^{k-1} (k-j) X_j^r \\
&=& r - Y_k^r \geq 0,
\end{eqnarray*}
where the last inequality holds by Claim~\ref{cl::Yk}.

Now, consider arbitrary vertices $u \in A_{i+1}$ and $v \in B_{i+1}$. By definition, if $uv \notin E_{i+1}$, then either $u = v$, or $uv \in M_1 \cup \ldots \cup M_i$, or $u'v \in M_1 \cup \ldots \cup M_i$ for some $u'$ which is a copy of $u$. Therefore
$$
d_{H_{i+1}}(v) \geq |A_{i+1}| - \off(v) - i \cdot \max \{\off(z) : z \in V(K_n)\} \geq |A_{i+1}| - k \log n,
$$
where the last inequality holds by Corollary~\ref{cor::maxLoad}. Similarly,
$$
d_{H_{i+1}}(u) \geq |B_{i+1}| - 1 - \off(u) \geq |B_{i+1}| - 1 - \log n.
$$
Since, moreover, $|A_{i+1}| \geq |B_{i+1}| \gg \log n$ for sufficiently large $n$, it follows by a straightforward application of Hall's Theorem that $H_{i+1}$ indeed has a matching which saturates $B_{i+1}$; let $M_{i+1}$ be such a matching chosen arbitrarily. This proves the existence of the matching $M_j$ in $H_j$ for every $1 \leq j \leq k$.

We are now ready to describe Builder's strategy. For every $1 \leq i \leq r$, Builder plays the $i$th round as follows. If $u_i$ is unmatched in $M_1 \cup \ldots \cup M_k$, then Builder claims an arbitrary free edge which is incident to $u_i$. Otherwise, let $1 \leq j \leq k$ be the unique integer such that $u_i$ is matched in $M_j$. Builder claims the unique edge of $M_j$ which is incident to $u_i$ if it is free, and an arbitrary free edge which is incident to $u_i$ otherwise. We claim that, by following this strategy, after $r$ rounds the minimum degree in Builder's graph is at least $k$. Indeed, let $v \in V(K_n)$ be an arbitrary vertex. If $\off(v) \geq k$, then the degree of $v$ in Builder's graph is at least $k$ regardless of his strategy. Assume then that $\off(v) = \ell$ for some $0 \leq \ell \leq k-1$. By Builder's strategy, $v$ is matched in $M_j$ for every $1 \leq j \leq k - \ell$. Hence, its degree in Builder's graph is at least $\ell + (k - \ell) = k$.

Finally, we are in a position to determine $\tau({\cd}_k, n)$ asymptotically. It is the smallest integer $r$ for which $\sum_{v \in V(K_n)} \off_k(v) \geq k n$. By Claim~\ref{cl::Yk} it is then also the smallest $r$ for which $Y_k^r \leq r$. It follows by Lemma~\ref{lem::Balls} that $Y_k^r$ is concentrated around its expectation and thus $(1 + o(1)) \E(Y_k^r) \leq r$. By linearity of expectaion and by~\eqref{e:exactly-k} the latter inequality translates to
$$
\alpha n \geq (1 + o(1)) \E(Y_k^r) = (1 + o(1)) \sum_{i=0}^{k-1} (k - i) \E(X_i^r) = (1 + o(1)) \sum_{i=0}^{k-1} \left(k - i \right) \frac{e^{- \alpha}}{i!} \alpha^i n,
$$
where $\alpha := r/n$. Since $f_k(x) = \sum_{i=0}^{k-1} \left(k-i \right) \frac{x^i}{i!} - x e^x$ is a continuous function, it follows that $\alpha = \alpha_k + o(1)$ where $\alpha_k$ is the unique positive root of $f_k(x)$, as claimed.
\end{proof}


\section{Online games}\label{sec:on}

\subsection{Online fixed graph game}

\begin{proof}[Proof of Theorem~\ref{th::fixedGraphUpperBound}]
	We will describe a strategy for Builder to build a copy of $H$ and will then prove that w.h.p.\ building such a copy using this strategy will take him at most $g(n) \cdot n^{(d-1)/d}$ rounds. Let $(v_1, \ldots, v_r)$ be a degeneracy ordering of the vertices of $H$, that is, an ordering such that $v_k$ has at most $d$ neighbors in $\{v_1, \ldots, v_{k-1}\}$ for every $2 \leq k \leq r$. We refer to these neighbors as the \textit{back neighbors} of $v_k$. We will define a mapping $\varphi : V(H) \to [n]$ such that $G_{m_k}[\{\varphi(v_1), \ldots, \varphi(v_k)\}]$ contains a copy of $H[\{v_1, \ldots, v_k\}]$ for every $1 \leq k \leq r$, where $m_k$ is the number of the round in which this is achieved for the first time. We will do so inductively as follows. Let $u_1$ be the vertex Builder is offered in the first round of the game and let $\varphi(v_1) = u_1$. Assume now that for some $1 \leq k \leq r-1$, Builder has already built a graph $G_{m_k}$ and defined $\varphi(v_i)$ for every $1 \leq i \leq k$ such that $G_{m_k}[\{\varphi(v_1), \ldots, \varphi(v_k)\}]$ contains a copy of $H[\{v_1, \ldots, v_k\}]$. Builder would now wish to define $\varphi(v_{k+1})$; he does so as follows. Let $v_{i_1}, \ldots, v_{i_\ell}$ be the back neighbors of $v_{k+1}$ in $H$. 
	If $\ell = 0$, then Builder defines $\varphi(v_{k+1}) = u$ for an
	arbitrary vertex $u \in [n] \setminus \{\varphi(v_1), \ldots,
	\varphi(v_k)\}$ and $m_{k+1} = m_k$. Assume then that $1 \leq \ell
	\leq d$ and observe that $\varphi(v_{i_j})$ was already defined for
	every $1 \leq j \leq d$.
	 For every $i > m_k$, let $u_i$ denote the vertex Builder is offered in the $i$th round. If $u_i \in \{\varphi(v_1), \ldots, \varphi(v_k)\}$, then Builder plays arbitrarily. Otherwise, let $t$ denote the total number of times $u_i$ was offered in rounds $m_k + 1, \ldots, i$. Builder claims the edge $u_i \varphi(v_{i_t})$. If, moreover, $t = \ell$, then Builder defines $\varphi(v_{k+1}) = u_i$ and $m_{k+1} = i$ (in particular, $1 \leq t \leq \ell$ and so this strategy is well-defined).  
	
	Now, observe that $m_{k+1} - m_k$ is the smallest number of rounds until some $u_i \in [n] \setminus \{\varphi(v_1), \ldots, \varphi(v_k)\}$ is offered $\ell$ times (counting offers in rounds $m_k + 1, \ldots, m_{k+1}$). By Builder's strategy, no vertex is offered more than $\ell$ times during those $m_{k+1} - m_k$ rounds and so the conditions of Lemma~\ref{lem::BallsTalagrand} are satisfied. Since, moreover, $r$ is a constant and $\ell \leq d$, it follows from Lemma~\ref{lem::BallsTalagrand} that w.h.p.\ $m_{k+1} - m_k <g(n)/r \cdot n^{(d-1)/d}$. This is true for every $1 \leq k < r$ (and $m_1 = 1$) and so w.h.p.\ the entire game lasts at most $g(n) \cdot n^{{(d-1)}/{d}}$ rounds as claimed.       
\end{proof}

\begin{proof}[Proof of Theorem~\ref{thm:lowerCliqueOnLine}]
	The assertion of the theorem trivially holds for $d=2$. Hence, for the remainder of the proof we may assume that $d \geq 3$.
	 The main ingredient of the proof is the following claim which upper bounds the number of copies of $K_\ell$, for some integer $\ell\geq3$, in Builder's graph up to some specific round.
	
	\begin{claim} \label{claim:NumberOfCopiesOfKk}
		For positive integers $\ell$ and $m = m(n)$ and a strategy $\mathcal S$ of Builder, let $Z_{m, \ell}^{\mathcal S}$ denote the number of copies of $K_\ell$ in $G_m$ when Builder is playing according to the strategy ${\mathcal S}$. Then w.h.p.\  $Z_{m,\ell}^{\mathcal S} \leq a(n) \cdot \frac {m^{\ell-1}}{n^{\ell-2}}$, for any function $a(n)$ which tends to infinity as $n$ tends to infinity.
	\end{claim}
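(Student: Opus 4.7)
My plan is to prove the stronger quantitative statement that $\E[Z_{m,\ell}^{\mathcal S}] \leq C_\ell \cdot m^{\ell-1}/n^{\ell-2}$ for every strategy $\mathcal S$ and some constant $C_\ell$ depending only on $\ell$; the claim then follows immediately from Markov's inequality, since for any $a(n)\to\infty$ one has $\Pr[Z_{m,\ell}^{\mathcal S} \geq a(n)\cdot m^{\ell-1}/n^{\ell-2}] \leq C_\ell / a(n) = o(1)$. The expectation bound will be proved by induction on $\ell$; the base case $\ell = 2$ is immediate because $Z_{m,2}^{\mathcal S}$ is at most the number of distinct edges of $G_m$, which is bounded by $m = m^{\ell-1}/n^{\ell-2}$.

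For the inductive step, I would partition copies of $K_\ell$ in $G_m$ according to their \emph{last edge}, that is, the round in $[m]$ at which the $\binom{\ell}{2}$-th edge of the clique is added. When the edge $e_t = v_t u_t$ is added in round $t$, every $K_\ell$ completed at $t$ consists of $e_t$ together with a copy of $K_{\ell-2}$ sitting inside $N_{G_{t-1}}(v_t) \cap N_{G_{t-1}}(u_t)$. I then weaken this by discarding the intersection with $N_{G_{t-1}}(u_t)$, so that
$$
Z_{m,\ell}^{\mathcal S} \;\leq\; \sum_{t=1}^m Q_t(v_t),
$$
where $Q_t(v)$ denotes the number of copies of $K_{\ell-2}$ contained in the subgraph of $G_{t-1}$ induced on $N_{G_{t-1}}(v)$.

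The key observation is that $v_t$ is uniform on $[n]$ and independent of both the history $G_{t-1}$ and any internal randomness $\mathcal S$ has used up to round $t$. Consequently
$$
\E\bigl[Q_t(v_t) \,\bigm|\, G_{t-1}\bigr] \;=\; \frac{1}{n}\sum_{v \in [n]} Q_t(v) \;=\; \frac{\ell-1}{n}\cdot Z_{t-1,\ell-1}^{\mathcal S},
$$
where the last equality is a standard double-count: each copy of $K_{\ell-1}$ in $G_{t-1}$ appears in $\sum_v Q_t(v)$ exactly $\ell-1$ times, once per distinguished vertex (whose removal leaves a $K_{\ell-2}$ in its neighborhood). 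Taking expectations, applying the inductive hypothesis $\E[Z_{t-1,\ell-1}^{\mathcal S}] = O((t-1)^{\ell-2}/n^{\ell-3})$, and using the routine bound $\sum_{t=1}^m (t-1)^{\ell-2} = O(m^{\ell-1})$ closes the induction and gives $\E[Z_{m,\ell}^{\mathcal S}] = O(m^{\ell-1}/n^{\ell-2})$.

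The main subtlety — and what I view as the chief obstacle — is insisting that the per-round bound hold uniformly over all (possibly adversarial) strategies. This is precisely what forces me to throw away $N_{G_{t-1}}(u_t)$ rather than exploit the full common neighborhood: since $u_t$ is chosen by Builder, he could in principle concentrate many $K_{\ell-1}$ copies around $u_t$, and any bound that depended on quantities tied to $u_t$ would not be controlled by an averaging-over-$v_t$ argument. Once $u_t$ is discarded and the uniformity of $v_t$ is used to extract the clean factor $1/n$ per round, the recursion propagates without the constants blowing up.
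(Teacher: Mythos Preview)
Your argument is correct and is essentially the same as the paper's: both prove by induction on $\ell$ that $\E[Z_{m,\ell}^{\mathcal S}]=O(m^{\ell-1}/n^{\ell-2})$ uniformly over strategies, bounding the number of $K_\ell$'s completed in round $t$ by the number of $K_{\ell-1}$'s in $G_{t-1}$ containing the offered vertex $v_t$ and then averaging over the uniform choice of $v_t$ to extract a factor $(\ell-1)/n$ times $Z_{t-1,\ell-1}^{\mathcal S}$. Your phrasing via $Q_t(v)$ and the explicit double count $\sum_v Q_t(v)=(\ell-1)Z_{t-1,\ell-1}^{\mathcal S}$ is slightly cleaner, and you avoid the paper's (unnecessary) detour through an ``optimal strategy'' $\mathcal S^m_{op}$ by applying Markov's inequality directly to the per-strategy expectation bound.
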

	
	\begin{proof}
	For positive integers $\ell \geq 3$ and $m = m(n)$ and a strategy ${\mathcal S}$, let $Y_{m, \ell}^{\mathcal S}$ denote the number of copies of $K_{\ell}$ that Builder creates in the $m$th round when he plays according to ${\mathcal S}$. Note that $Z^{\mathcal S}_{m,\ell}=\sum_{i=1}^mY^{\mathcal S}_{i,\ell}$.
		
		Let ${\mathcal S}$ be an arbitrary strategy of Builder. We prove by induction on $\ell$ that $\E(Y^{\mathcal S}_{m,\ell}) \leq \left(\frac {(\ell-1) m}{n} \right)^{\ell-2}$. First we introduce some useful notation. 	For an integer $m$, let $G'$ be a copy of $K_\ell$ in $G_m$. For $v\in V(G') \subseteq V(G_m)$, we denote by $(K_{\ell-1},v)_{G'}$ the ordered pair consisting of the copy of $K_{\ell-1}$ in $G'$ that does not contain $v$, and the remaining vertex $v$. For the induction basis $\ell = 3$, it is evident that in order to create a copy of $K_3$ in some round $r$, we must touch a vertex $v$ that belongs to a copy $G'$ of $K_2$. If $v$ was offered to Builder in round $r$, then the pair $(K_1,v)_{G'}$ will lie in at most one copy of $K_3$ after round $r$ (it is impossible to create two different copies of $K_3$, both containing $G'$, in one round). The number of such potential pairs after $m$ rounds is at most $2m$ and the probability that a specific vertex $v$ will be offered in round $r$ is $\frac 1n$. Therefore,  $\E(Y^{\mathcal S}_{m,3})\leq 2m\cdot \frac 1n=\frac {2m}{n}$. Now, for the induction step, assume that $\E(Y^{\mathcal S}_{m,\ell}) \leq \left(\frac {(\ell-1) m}{n} \right)^{\ell-2}$. In order to create a copy of $K_{\ell+1}$, Builder  must touch a vertex $v$ that belongs to a copy $G'$ of $K_\ell$. If $v$ was offered to Builder in some round $r$, then the pair $(K_{\ell-1},v)_{G'}$ will lie in at most  one copy of $K_{\ell+1}$ immediately after the $r$th round. Since  for each copy $G'$ of $K_\ell$ there are $\ell$  ordered pairs $(K_{\ell-1},v)_{G'}$, the expected number of such potential pairs after $m-1$ rounds is ${\ell}\cdot\E(Z^{\mathcal S}_{m-1,\ell})$ and the probability that a specific vertex $v$ will be offered in round $r$ is $\frac 1n$. Therefore,
		\begin{align*}
		\E(Y^{\mathcal S}_{m,\ell+1})&\leq\ell\cdot\E(Z^{\mathcal S}_{m,\ell})\cdot \frac 1n
		=\frac \ell n \sum_{i=1}^m \E(Y^{\mathcal S}_{i,\ell})\\
		&\leq \frac \ell n \sum_{i=1}^m (\ell-1)^{\ell-2}\left(\frac {i}n\right)^{\ell-2}
		=\frac {\ell(\ell-1)^{\ell-2}}{n^{\ell-1}}\sum_{i=1}^{m}i^{\ell-2}\\
		&\leq \frac {\ell(\ell-1)^{\ell-2}}{n^{\ell-1}}\cdot m^{\ell-1}
		\leq \frac {\ell^{\ell-1}}{n^{\ell-1}}\cdot m^{\ell-1}.
		\end{align*}
		This concludes the induction, from which it follows that 
		\begin{equation}\label{eq:LowerOnLineFix}
		\E(Z^{\mathcal S}_{m,\ell})=\sum_{i=1}^m \E(Y^{\mathcal S}_{i,\ell})\leq (\ell-1)^{\ell-2}\cdot\frac {m^{\ell-1}}{n^{\ell-2}}.
		\end{equation}
		Now, let ${\mathcal S}^m_{op}$ be a strategy for Builder which maximizes the number of copies of $K_\ell$ in $G_m$. 
		Let $Z_{m,\ell}^{op}$ be the number of copies of $K_\ell$ in $G_m$ when Builder plays according to ${\mathcal S}^m_{op}$. It follows by \eqref{eq:LowerOnLineFix} that $\E(Z_{m,\ell}^{op})\leq C\cdot\frac {m^{\ell-1}}{n^{\ell-2}}$, where $C = C(\ell)$ is a constant. By Markov's inequality we have that
		$$\lim_{n\to\infty}\Pr\left[Z_{m,\ell}^{op}>a(n)\cdot \frac {m^{\ell-1}}{n^{\ell-2}} \right]= 0$$ for any function $a(n)\to \infty$. Since  $$\Pr\left[Z_{m,\ell}^{{\mathcal S}}>a(n)\cdot \frac {m^{\ell-1}}{n^{\ell-2}} \right]\leq \Pr\left[Z_{m,\ell}^{op}>a(n)\cdot \frac {m^{\ell-1}}{n^{\ell-2}} \right]$$ holds for every strategy ${\mathcal S}$, our claim follows.
	\end{proof}

	Now, let $m =
	(a(n))^{-1} n^{(d-2)/(d-1)}$. Then, by Claim~\ref{claim:NumberOfCopiesOfKk}, w.h.p.
	$Z_{m,d}^{\mathcal S} < 1$ for any strategy ${\mathcal S}$. That is,
	if $m \leq (a(n))^{-1} n^{(d-2)/(d-1)}$, then w.h.p. $G_m$ does not
	contain a copy of $K_d$.
\end{proof}


\subsection{Online Minimum Degree $k$} \label{subsec:online_min_deg_k}

Our main goal in this subsection is to prove Theorem~\ref{th::OnMinDegk}. In fact, we will study three variants of the minimum degree $k$ game: $(\cd_k, n)$, where loops and multiple edges are not counted when calculating the degree of each vertex; $(\cd'_k, n)$, where multiple edges are counted but loops are not; and $(\cd''_k, n)$, where all edges, including multiple edges and loops, are counted (every loop increases the degree of the vertex by two).
$(\cd'_k, n)$ will be useful in the next subsection, concerned with $k$-connectivity.

Recall the strategy $\mathcal S_{\min}$ presented in Subsection \ref{subsec:min_deg_process}. We show that a simple variant of this strategy, denoted $\mathcal{S}^\dagger_{\min}$, is optimal for the game $(\cd''_k, n)$. Utilizing Proposition \ref{pro:Gmin}, we conclude that $\mathcal{S}_{\min}$ is almost optimal for all of these three  games in some precise sense.

Before stating our results, we need the following additional notation and terminology.
For two random variables $X$ and $Y$, taking values in $\mathbb{N}$, an integer $\ell \geq 0$, and a real number $\varepsilon \geq 0$, we say that $X$ \textit{$(\ell, \varepsilon)$-dominates} $Y$ if $\Pr(X \leq t+\ell) \geq \Pr(Y \leq t) - \varepsilon$ for any $t$. In our context, we identify each strategy $\mathcal{S}$ for a given game $\mathcal{G} = ({\mathcal P}, n)$ with its hitting time $H_\mathcal{G}(\mathcal{S})$ for this game, i.e., the random variable representing the number of rounds required for $\mathcal{S}$ to win $\mathcal{G}$. We say that $\mathcal{S}$ \textit{$(\ell, \varepsilon)$-dominates} another strategy $\mathcal{S}'$ if $H_\mathcal{G}(\mathcal{S})$ $(\ell, \varepsilon)$-dominates $H_\mathcal{G}(\mathcal{S}')$; in the special case $\ell = \varepsilon = 0$, we simply say that $\mathcal{S}$ \textit{dominates} $\mathcal{S}'$. $\mathcal{S}$ is \textit{$(\ell, \varepsilon)$-optimal} for a given game if it $(\ell, \varepsilon)$-dominates any other strategy $\mathcal{S}'$ for this game; if $\mathcal{S}$ is $(0, 0)$-optimal, we simply say that it is \textit{optimal}.

\begin{theorem} \label{th::MinDegkOnline}
	For every fixed positive integer $k$, the strategy $\mathcal S_{\min}$ is $(o(n), o(1))$-optimal for all three games $(\cd_k, n)$, $(\cd'_k, n)$, and $(\cd''_k, n)$.
\end{theorem}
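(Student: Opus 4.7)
The plan is to reduce the three-game statement to the single claim that $\mathcal{S}^\dagger_{\min}$ is \emph{exactly} optimal for the most permissive game $(\cd''_k, n)$, and then to transfer this via Propositions~\ref{pro:G''min} and~\ref{pro:Gmin} to $\mathcal{S}_{\min}$ and to the stricter games $(\cd_k, n)$ and $(\cd'_k, n)$ at a cost of $o(n)$ rounds. The three pillars are: (i) optimality of $\mathcal{S}^\dagger_{\min}$ for $(\cd''_k, n)$; (ii) agreement of hitting times of $\mathcal{S}_{\min}$ and $\mathcal{S}^\dagger_{\min}$ up to an additive $o(n)$; and (iii) agreement of hitting times under $\mathcal{S}_{\min}$ across the three games up to $o(n)$.

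For pillar (i), I would define $\mathcal{S}^\dagger_{\min}$ to connect the offered vertex $v$ to a vertex $u$ (possibly $u=v$, allowing loops) chosen to maximize the one-round decrease in the potential $\Phi(G) := \sum_w \max(0, k - d(w))$, where $d$ is the multigraph degree; up to tie-breaking this is the minimum-degree rule. A round decreases $\Phi$ by $0$, $1$, or $2$, and $\mathcal{S}^\dagger_{\min}$ achieves the maximum possible decrease given $v$ and the current graph. The heart of the argument is a Bellman/MDP step: letting $V(G)$ denote the expected remaining rounds under the optimal strategy, I would show that $V$ depends only on the truncated sorted degree profile $(\min(d(w), k))_{w \in [n]}$ and is Schur-convex in that profile, so that the greedy action $\arg\max_u (-\Delta\Phi)$ is expectation-optimal. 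Combined with Wormald's differential-equation method applied to the degree profile under $\mathcal{S}^\dagger_{\min}$---which by Proposition~\ref{pro:G''min} coincides in distribution with that of $G''_{\min}$---this pinpoints the optimal hitting time as $(h_k + o(1))n$ w.h.p.

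For pillar (ii), observe that $\mathcal{S}^\dagger_{\min}$ differs from $\mathcal{S}_{\min}$ only in the handling of ties involving the offered vertex itself, and hence in the occasional creation of loops and multi-edges. A Markov-type calculation, analogous to the one in the proof of Proposition~\ref{pro:Gmin}, bounds the total number of rounds on which the two strategies can deviate (and the total number of loops and multi-edges created) by $o(n)$ w.h.p.\ during the first $O(n)$ rounds, giving agreement of their hitting times up to an additive $o(n)$. For pillar (iii), the implication chain $\cd_k \Rightarrow \cd'_k \Rightarrow \cd''_k$ shows that the optimal hitting time in the stricter games is no smaller than in $(\cd''_k, n)$, so the lower bound from pillar (i) transfers. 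The matching upper bound on $\mathcal{S}_{\min}$'s hitting time in $(\cd_k, n)$ (and hence in $(\cd'_k, n)$) follows from Proposition~\ref{pro:Gmin}, which couples $\mathcal{S}_{\min}(n, (1+o(1))m)$ with the simple-graph process $G_{\min}(n, m)$ so that the former contains the latter w.h.p., together with Wormald's DE analysis of $G_{\min}$ yielding minimum-degree-$k$ hitting time $(h_k + o(1))n$.

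The main obstacle is pillar (i). A naive pathwise coupling of two strategies on a shared sequence of offered vertices does \emph{not} preserve a pointwise comparison of degree sequences: a strategy whose graph happens to place the next offered vertex at a low-degree slot can in a single round reduce $\Phi$ by more than $\mathcal{S}^\dagger_{\min}$ can on a different graph where that same vertex already has degree at least $k$. The optimality argument must therefore be purely expectational, established via a Schur-convexity / rearrangement property of the value function $V$ on the truncated sorted degree profile, or equivalently via an exchange argument showing that any deviation from $\arg\max_u(-\Delta\Phi)$ can only weakly increase expected remaining rounds. All other ingredients---the counting of loops and multi-edges, Propositions~\ref{pro:Gmin} and~\ref{pro:G''min}, and Wormald's DE analysis for $h_k$---are standard.
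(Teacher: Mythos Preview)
Your three-pillar decomposition matches the paper exactly: pillar~(i) is Lemma~\ref{lem:MinDegK''}, pillar~(ii) is Lemma~\ref{lem:S_min}, and pillar~(iii) is handled via Propositions~\ref{pro:G''min} and~\ref{pro:Gmin} together with the trivial implications $\cd_k \Rightarrow \cd'_k \Rightarrow \cd''_k$. Your treatments of pillars~(ii) and~(iii) are essentially the paper's.

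The gap is pillar~(i), which you correctly identify as the crux but do not prove. Your Schur-convexity/Bellman outline is plausible in spirit, but nothing in what you wrote establishes that the value function has the required monotonicity; you would still need an interchange step, and that step is precisely the missing content. More importantly, your assertion that the argument ``must therefore be purely expectational'' because ``a naive pathwise coupling \ldots\ does not preserve a pointwise comparison of degree sequences'' is where you go astray. The paper's proof of Lemma~\ref{lem:MinDegK''} \emph{is} a pathwise coupling, just not a naive one. Given an $i$-minimizing strategy $\mathcal{S}$ that in round $i+1$ connects the offered vertex to some $u$ while a minimum-degree vertex $w$ is available, define $\mathcal{S}'$ to connect to $w$ instead, and thereafter imitate the moves $\mathcal{S}$ would have made (as if $u$ had been chosen in round $i+1$), \emph{until} the moment $d_G(w) > d_G(u)$ in $\mathcal{S}'$'s graph; from then on, permanently swap the roles of $u$ and $w$ in every decision. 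Under this coupling the two evolving graphs agree at every vertex except possibly $u$ and $w$, and one checks that at every time $\min\{d(u),d(w)\}$ under $\mathcal{S}'$ is at least its value under $\mathcal{S}$; hence $\mathcal{S}'$ dominates $\mathcal{S}$, and iterating shows every minimizing strategy (in particular $\mathcal{S}^\dagger_{\min}$) is optimal. The point is that the coupling never attempts to maintain a full degree-sequence comparison---only the pair $(d(u),d(w))$ is tracked, and the role switch is exactly what repairs the comparison when the offered-vertex randomness would otherwise break it. This concrete exchange is the idea your sketch is missing.
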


Consider the following strategy, denoted $\mathcal S^\dagger_{\min}$, which is a slight variant of $\mathcal S_{\min}$. In any given round, let $G$ denote Builder's graph immediately before this round starts. Once a vertex $v$ is offered to Builder, we increase its degree by $1$, and only then choose a vertex $u$ u.a.r. among all vertices of minimum degree at this point. Hence, unlike in $\mathcal S_{\min}$, it is possible that $v$ was a vertex of minimum degree before it was offered but is not after we increase its degree by $1$, and so will surely not be chosen as the second vertex in this round. 


The main technical ingredient in the proof of Theorem~\ref{th::MinDegkOnline} is the following lemma.
\begin{lemma} \label{lem:MinDegK''}
	$\mathcal S^\dagger_{\min}$ is optimal for $(\cd''_k, n)$. 
\end{lemma}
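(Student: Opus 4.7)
The plan is to prove that $\mathcal S^\dagger_{\min}$ stochastically dominates every other strategy for $(\cd''_k, n)$, i.e., $\Pr[H_{(\cd''_k, n)}(\mathcal S^\dagger_{\min}) \leq t] \geq \Pr[H_{(\cd''_k, n)}(\mathcal S) \leq t]$ for every $t \geq 0$ and every strategy $\mathcal S$. The first step is to reduce the game to truncated degrees $\tilde d_v := \min(d_v, k)$: the game terminates precisely when $\min_v \tilde d_v = k$, and the evolution of the truncated multiset depends only on the truncated degrees themselves (raising $d_v$ above $k$ has no further effect on $\tilde d_v$, loops included). By the label-symmetry of the random offer, we may also restrict attention to strategies that depend only on the sorted truncated multiset $c$.

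For a strategy $\mathcal S$, a time $t \geq 0$, and a truncated multiset $c$, let $W^{(t)}_{\mathcal S}(c)$ denote the probability of finishing within $t$ rounds starting from $c$ under $\mathcal S$. I would prove by simultaneous induction on $t$ the two statements: (a) $W^{(t)}_{\mathcal S^\dagger_{\min}}(c) \geq W^{(t)}_{\mathcal S}(c)$ for every $c$ and every $\mathcal S$, and (b) $W^{(t)}_{\mathcal S^\dagger_{\min}}$ is Schur-concave in $c$ (with respect to the ascending partial-sum order on truncated multisets, so that more ``balanced'' starting states enjoy a larger finishing probability). The inductive step for (a) rests on a one-round exchange argument: fix a state $c$ and an offered bucket $j$. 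The action of $\mathcal S^\dagger_{\min}$ produces the state $c^{\min}$ by incrementing the offered vertex's truncated degree together with a vertex in the post-tentative minimum bucket $\mu$, while any alternative action produces a state $c^{\mathrm{alt}}$ by instead incrementing a vertex in some bucket $j' \geq \mu$ (or, in the loop case, doubly-incrementing $v$). A direct computation of ascending partial sums shows that the $\ell$ smallest truncated degrees sum to at least as much under $c^{\min}$ as under $c^{\mathrm{alt}}$ for every $\ell$, so $c^{\min}$ is majorized by $c^{\mathrm{alt}}$; combined with (b) at time $t$, this implies that the $\mathcal S^\dagger_{\min}$-action maximizes $\mathbb{E}[W^{(t)}_{\mathcal S^\dagger_{\min}}]$ over all admissible actions, and (a) at time $t+1$ follows from the Bellman recursion $W^{(t+1)}_{\mathcal S}(c) = \sum_j (c_j/n) \sum_{c'} \Pr(c' \mid c, j, \mathcal S) \, W^{(t)}_{\mathcal S}(c')$.

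The main obstacle is step (b), the Schur-concavity of $W^{(t)}_{\mathcal S^\dagger_{\min}}$. I would establish it by coupling two runs of $\mathcal S^\dagger_{\min}$ started from states $c \prec c'$ using a common source of randomness for the ranks of offered vertices, and verifying that the weak-majorization relation is preserved at every step. The key structural fact is that the rule ``always pick $u$ from the minimum bucket'' contracts majorization differences: a single Robin Hood transfer separating $c$ from $c'$ is propagated to at most one such transfer in the next state, so the invariant survives the round. A careful case analysis on whether the offered rank falls in a bucket where the two coupled states disagree, together with a separate treatment of the (rare) loop case in which the offered vertex is the unique minimum, is required to check this preservation in all configurations and close the joint induction.
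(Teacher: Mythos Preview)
Your approach is correct in outline, and it is genuinely different from the paper's. You set up the problem as a finite-horizon Markov decision process on truncated degree multisets and aim to show that the greedy policy $\mathcal S^\dagger_{\min}$ is optimal by proving a structural monotonicity (Schur-concavity) of the optimal value function. The paper, by contrast, never works with value functions at all: it uses a direct \emph{interchange argument at the strategy level}. Given any $i$-minimizing strategy $\mathcal S$, it exhibits an $(i+1)$-minimizing strategy $\mathcal S'$ that dominates it, by having $\mathcal S'$ pick a minimum-degree vertex $w$ in round $i+1$ (rather than the vertex $u$ that $\mathcal S$ would have picked) and thereafter imitate $\mathcal S$, swapping the roles of $u$ and $w$ the first time $d(w)$ overtakes $d(u)$. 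Transitivity then pushes every strategy below some minimizing strategy, and all minimizing strategies are equivalent.

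The trade-off is this. Your route makes explicit the underlying reason the greedy rule wins---the value function is monotone in the weak-supermajorization order---and that structural statement may be reusable. But the price is that the entire weight of the argument falls on step~(b), the preservation of the majorization order under one step of $\mathcal S^\dagger_{\min}$, which you yourself flag as needing a ``careful case analysis'' (ties, the loop case when the offered vertex is the unique minimum, and the coupling of the random choice of $u$ among several minimum-degree vertices). The paper's role-switching trick sidesteps all of this: it needs no monotonicity of any value function, only the trivial observation that after the swap the two processes have identical degree multisets and hence identical hitting-time distributions, while before the swap the $\mathcal S'$-process is never worse. In effect, the paper replaces your inductive Schur-concavity lemma by a two-line coupling on a single pair of vertices. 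If you want a short self-contained proof, the interchange argument is cleaner; if you want the stronger structural statement, your route is the right one, but you should expect the case analysis in~(b) to be where the real work lies.
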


The next lemma compares the performance of $\mathcal S_{\min}$ and $\mathcal S^{\dagger}_{\min}$, asserting that $\mathcal S_{\min}$ is indeed essentially optimal. 
\begin{lemma} \label{lem:S_min}
	$\mathcal{S}_{\min}$ $(o(n), o(1))$-dominates $\mathcal{S}^{\dagger}_{\min}$ for $(\cd''_k, n)$.
\end{lemma}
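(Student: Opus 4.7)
The plan is to couple $\mathcal{S}_{\min}$ and $\mathcal{S}^{\dagger}_{\min}$ on the same random sequence of offered vertices $v_1, v_2, \ldots$ and to show that the hitting time of $\mathcal{S}_{\min}$ exceeds that of $\mathcal{S}^{\dagger}_{\min}$ by at most $o(n)$ rounds with probability $1-o(1)$. The two strategies differ only when the offered vertex $v$ currently has minimum degree, in which case $\mathcal{S}_{\min}$ may pick $u = v$ (creating a loop) while $\mathcal{S}^{\dagger}_{\min}$ never does under the natural coupling. A direct calculation shows that the \emph{unconditional} probability that $\mathcal{S}_{\min}$ creates a loop in any given round equals $1/n$, independent of the current graph state: summing $\Pr[w \text{ is offered}] \cdot \Pr[u = w \mid w \text{ is offered}]$ over all vertices $w$ yields $\sum_{w \in V_{\min}(G)} (1/n) \cdot (1/|V_{\min}(G)|) = 1/n$.

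By Corollary~\ref{cor:Kout} applied to $\cd''_k$ (which is implied by $\cd_k$), both coupled processes terminate within $m = (k+o(1))n$ rounds w.h.p., so the expected number of loops of $\mathcal{S}_{\min}$ over the entire game is at most $m/n = O(1)$. Markov's inequality then gives that $\mathcal{S}_{\min}$ creates at most $\log n$ loops with probability $1-o(1)$. Next, I would argue that under the coupling, each loop has the net effect of transferring one unit of degree from a vertex $u$ (which $\mathcal{S}^{\dagger}_{\min}$ would have connected $v$ to, granting $d(u) \mathrel{+}= 1$) onto $v$ itself (which gets $d(v) \mathrel{+}= 2$ via the loop instead of $d(v) \mathrel{+}= 1$). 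Accumulated over all loops, this means that at time $T_{\mathcal{S}^{\dagger}_{\min}}$, the graph of $\mathcal{S}_{\min}$ has minimum degree at least $k$ except at a set $D$ of at most $\log n$ ``deficit'' vertices of degree exactly $k-1$. Since $D$ then forms the minimum-degree set in $\mathcal{S}_{\min}$'s graph, every subsequent round of $\mathcal{S}_{\min}$ picks $u \in D$ (or offers and picks a vertex of $D$), raising a deficit vertex's degree to at least $k$; hence $|D|$ decreases by at least one per round and is cleared within $\log n$ additional rounds. Combining, $T_{\mathcal{S}_{\min}} \leq T_{\mathcal{S}^{\dagger}_{\min}} + O(\log n) = T_{\mathcal{S}^{\dagger}_{\min}} + o(n)$ w.h.p., which is exactly the $(o(n), o(1))$-domination asserted by the lemma.

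The main obstacle is that once loops have occurred, the two coupled graphs $G$ and $G^{\dagger}$ have diverged, and their minimum-degree sets may differ, so the coupling of the choice of $u$ in subsequent non-loop rounds is not entirely straightforward: the strategies may be forced to make different choices even when no new loop is created. I would address this by introducing a potential function tracking the cumulative degree discrepancy between $G$ and $G^{\dagger}$ (essentially the total number of vertices on which the degrees differ, weighted appropriately) and showing, via a martingale argument using Theorem~\ref{th::Azuma}, that this potential remains $O(\log n)$ throughout the process, so that all coupling slippage can be absorbed within the $o(n)$ slack of the domination statement.
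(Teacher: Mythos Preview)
Your opening steps match the paper: the observation that $\mathcal S_{\min}$ and $\mathcal S^{\dagger}_{\min}$ differ only in ``loop rounds'' and that such a round occurs with unconditional probability $1/n$ is exactly what the paper uses, and the Markov bound on the number of loops is fine (the paper gets the deterministic bound $kn$ on the running time directly, rather than via Corollary~\ref{cor:Kout}, but this is cosmetic).

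The gap is in the middle of your argument, and it is precisely the obstacle you yourself flag. Your claim that ``at time $T_{\mathcal{S}^{\dagger}_{\min}}$, the graph of $\mathcal{S}_{\min}$ has minimum degree at least $k$ except at a set $D$ of at most $\log n$ deficit vertices of degree exactly $k-1$'' does not follow from the loop count alone. Once the two degree sequences differ, their minimum-degree sets differ, and then even non-loop rounds send the $+1$ to different vertices; the discrepancy is not confined to the original ``deficit'' vertices and its total size is not obviously bounded by the number of loops. A small example shows this: start with three vertices $v,u,a$ of degree $0$ and the rest of degree $5$; after a loop at $v$ the multisets are $(0,0,2,5,\ldots)$ versus $(0,1,1,5,\ldots)$, and one further non-loop round in which $v$ is offered leaves the two multisets as $(0,1,3,5,\ldots)$ versus $(1,1,2,5,\ldots)$. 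The $\mathcal S_{\min}$ sequence now neither matches nor degree-dominates the $\mathcal S^{\dagger}_{\min}$ sequence, and the deficit has shifted. Your proposed fix via a potential function and Azuma is too vague to be convincing: the increments are not independent of the history, and you have not specified a potential whose bounded differences would yield the desired $O(\log n)$ bound.

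The paper avoids tracking discrepancies altogether by using degree-domination (Observation~\ref{obs:degree_dominate}). The key point is that a single \emph{non-failure} round of $\mathcal S_{\min}$ from a graph $G$ has the same degree-sequence distribution as a round of $\mathcal S^{\dagger}_{\min}$ from $G$, while a failure round of $\mathcal S_{\min}$ \emph{followed by one extra round} degree-dominates a round of $\mathcal S^{\dagger}_{\min}$ from $G$. Since degree-domination is preserved by both strategies (this is Observation~\ref{obs:degree_dominate}), one maintains the invariant that the $\mathcal S_{\min}$ graph, run for $t$ plus the current number of failures rounds, degree-dominates the $\mathcal S^{\dagger}_{\min}$ graph after $t$ rounds. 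No coupling of the $u$-choices across two diverging graphs is ever needed. This is the missing idea you should use in place of the potential/Azuma plan.
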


Theorem~\ref{th::MinDegkOnline} is an immediate corollary of Lemmas~\ref{lem:MinDegK''} and~\ref{lem:S_min}, and Propositions~\ref{pro:G''min} and~\ref{pro:Gmin}.
Before proceeding to the proofs of these lemmas, we briefly discuss previous results on the behavior of $\mathcal{S}_{\min}$.
Wormald showed, using his seminal differential equations method~\cite{Wormald, Wormald2}, that for every positive integer $k$ there exists a constant $h_k$ such that w.h.p. the min-degree process reaches minimum degree $k$ after $h_k n + o(n)$ rounds. This was explicitly shown in~\cite{Wormald, Wormald2} for $\{G'_{min}(n,m)\}_{m \geq 0}$ (corresponding to the game $(\cd'_k, n)$), but it is easy to show that it still holds for $\{G_{min}(n,m)\}_{m \geq 0}$ and $\{G''_{min}(n,m)\}_{m \geq 0}$ as well. Theorem~\ref{th::OnMinDegk} is thus an immediate corollary of Theorem~\ref{th::MinDegkOnline}. In fact, we obtain the following more general result as implied by Propositions~\ref{pro:Gmin} and~\ref{pro:G''min}.
\begin{corollary}
	Let $k$ be a positive integer. Then w.h.p. $\tau({\cd}_k, n) = (h_k + o(1)) n$. The same is true for $\tau(\cd'_k, n)$ and $\tau(\cd''_k, n)$.
\end{corollary}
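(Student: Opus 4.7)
The plan is to combine three ingredients already available in this subsection. The first is Wormald's differential equations method, which yields that w.h.p.\ $G_{\min}(n,m)$ reaches (simple-graph) minimum degree $k$ at $m = h_k n + o(n)$, and similarly for $G'_{\min}(n,m)$ and $G''_{\min}(n,m)$ with their respective notions of degree. The second is the set of coupling results relating $\mathcal{S}_{\min}$ to the three min-degree processes: Propositions~\ref{pro:G''min}, \ref{pro:G'min}, and~\ref{pro:Gmin}. The third is Theorem~\ref{th::MinDegkOnline}, which asserts that $\mathcal{S}_{\min}$ is $(o(n), o(1))$-optimal for $(\cd''_k, n)$. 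A structural observation I would exploit repeatedly is the implication chain $\cd_k \Rightarrow \cd'_k \Rightarrow \cd''_k$ (each successive property lets more edge multiplicities contribute to the degree), which gives
\[
\tau(\cd''_k, n) \leq \tau(\cd'_k, n) \leq \tau(\cd_k, n).
\]

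For the upper bound, I would establish $\tau(\cd_k, n) \leq (h_k + o(1))n$ w.h.p.\ directly, and then transfer the same bound down to $\cd'_k$ and $\cd''_k$ using the above monotonicity. To handle $\cd_k$, I would plug $m_\cp = (h_k + o(1))n$ into Corollary~\ref{cor:Gmin}: Wormald guarantees that w.h.p.\ $G_{\min}(n, m_\cp)$ has minimum degree $k$ as a simple graph, and the corollary then yields $\tau(\cd_k, n) \leq (1+o(1)) m_\cp = (h_k + o(1))n$ w.h.p. The containment in the underlying coupling goes in the right direction: Builder's graph contains a simple subgraph of minimum degree $k$, so it satisfies $\cd_k$ (and consequently the weaker $\cd'_k$ and $\cd''_k$ as well).

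For the lower bound, I would only prove $\tau(\cd''_k, n) \geq (h_k - o(1))n$ w.h.p., transferring upward via the same monotonicity. Here I would invoke Theorem~\ref{th::MinDegkOnline} together with Proposition~\ref{pro:G''min}. The latter identifies $\mathcal{S}_{\min}(n,\cdot)$ exactly with $G''_{\min}(n,\cdot)$ in distribution, so Wormald's result pins down the hitting time of $\mathcal{S}_{\min}$ for $\cd''_k$ at $(h_k + o(1))n$ w.h.p. The $(o(n), o(1))$-optimality of $\mathcal{S}_{\min}$ then forces every other strategy $\mathcal{S}$ to satisfy
\[
\Pr\bigl(H_{\cd''_k}(\mathcal{S}) \leq t\bigr) \leq \Pr\bigl(H_{\cd''_k}(\mathcal{S}_{\min}) \leq t + o(n)\bigr) + o(1)
\]
for every threshold $t$; taking $t = (h_k - \varepsilon)n$ with $\varepsilon = \varepsilon(n) \to 0$ sufficiently slowly yields the lower bound.

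The proof is essentially bookkeeping once the technical machinery is in place; the main thing to be careful about is not to confuse the two directions of monotonicity. The upper bound cascades downward from $\cd_k$ (the strongest property, handled via the $G_{\min}$ coupling which produces a simple graph of large minimum degree), while the lower bound cascades upward from $\cd''_k$ (the weakest property, where the matching bound comes from the exact identification with $G''_{\min}$ and the near-optimality of $\mathcal{S}_{\min}$). No substantial obstacle is anticipated.
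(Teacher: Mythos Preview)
Your proposal is correct and follows essentially the same approach as the paper: the paper presents the corollary as an immediate consequence of Theorem~\ref{th::MinDegkOnline} together with Propositions~\ref{pro:Gmin} and~\ref{pro:G''min} and Wormald's result on the hitting time of the min-degree process, which is exactly the combination you use. The only organizational difference is that you exploit the monotonicity $\tau(\cd''_k,n)\le\tau(\cd'_k,n)\le\tau(\cd_k,n)$ to prove the upper bound once (for $\cd_k$) and the lower bound once (for $\cd''_k$), whereas the paper, having stated Theorem~\ref{th::MinDegkOnline} for all three games simultaneously, would apply it directly to each; both routes are valid and use the same ingredients.
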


The first few $h_k$'s were explicitly calculated in~\cite{KKRL}; it was shown there that
\begin{eqnarray*}
	&& h_1 = \ln 2 \approx 0.6931 \\
	&& h_2 = \ln 2 + \ln (1 + \ln 2) \approx 1.2197 \\
	&& h_3 = \ln ((\ln 2)^2 + 2(1 + \ln 2)(1 + \ln(1 + \ln 2))) \approx 1.7316
\end{eqnarray*}
Calculating $h_k$ for $k > 3$ can be carried out in a straightforward manner, by iteratively solving a simple differential equation with suitable initial conditions. For more details, see Subsections 3.1 and 3.2 of~\cite{Wormald2}.

We now proceed to the proofs of Lemmas~\ref{lem:MinDegK''} and~\ref{lem:S_min}.

\begin{proof}[Proof of Lemma~\ref{lem:MinDegK''}]
	For any integer $i \geq 0$, a strategy $\mathcal{S}$ of Builder is called \textit{$i$-minimizing} if in each of the first $i$ rounds, Builder chooses to connect the vertex he is offered to a vertex of minimum degree. $\mathcal{S}$ is said to be \textit{minimizing} if it is $i$-minimizing for every $i$. 
	In order to prove the lemma, it suffices to show that any $i$-minimizing strategy for $(\cd''_k, n)$ is dominated by some $(i+1)$-minimizing strategy. Indeed, seeing that domination is a transitive relation and that, trivially, any strategy is $0$-minimizing, the last statement implies that any strategy is dominated by a minimizing strategy. Moreover, any two minimizing strategies $\mathcal{S}$ and $\mathcal{S}'$ are clearly equivalent (in the sense that $\mathcal{S}$ dominates $\mathcal{S}'$ and $\mathcal{S}'$ dominates $\mathcal{S}$). Since $\mathcal S^{\dagger}_{\min}$ is a minimizing strategy, we conclude that it is optimal.
	
	Let $\mathcal{S}$ be an $i$-minimizing strategy, and consider the following $(i+1)$-minimizing strategy $\mathcal{S}'$;
	$\mathcal{S}'$ is identical to $\mathcal{S}$ in the first $i$ rounds.
	Conditioned on the degree sequence of Builder's graph immediately after the first $i$ rounds are completed and the vertex of round $i+1$, say $v_{i+1}$, is offered, for any vertex $v$ of the graph let $q_v$ denote the probability that, when playing according to $\mathcal{S}$, Builder chooses $v$ as the second vertex in round $i+1$.
	At this point, let $w$ be an arbitrary vertex of minimum degree and let $u$ be a vertex chosen randomly according to the distribution induced by $\mathcal{S}$, that is, for any vertex $v$, the probability that $u = v$ is $q_v$. In round $i+1$, when playing according to $\mathcal{S}'$, Builder claims an edge connecting $w$ and $v_{i+1}$.
	
	In the remainder of the game $\mathcal{S}'$ instructs Builder to play as follows.
	As long as $d_G(w) \leq d_G(u)$ (where $G$ denotes Builder's graph at any point during the game), $\mathcal{S}'$ imitates the behavior of $\mathcal{S}$ under the assumption that the second vertex chosen in round $i+1$ was $u$ (and not $w$, as was actually instructed by $\mathcal{S}'$).
	If, at some point, the degree of $w$ in Builder's graph exceeds that of $u$, then $\mathcal{S}'$ ``switches roles'' between these two vertices, that is, from now on, whenever $\mathcal{S}$ dictates that the offered vertex should be connected to $u$, $\mathcal{S}'$ dictates that it should be connected to $w$ instead, and vice versa. The behavior of $\mathcal{S}'$ with respect to other vertices is identical to that of $\mathcal{S}$.
	
	Clearly, $\mathcal{S}'$ is $(i+1)$-minimizing, and it is not hard to see that it dominates $\mathcal{S}$.
	Indeed, let $E$ denote the event that a role switch occurs (between some two vertices $u$ and $w$ as described) at some point when following the strategy $\mathcal{S}'$. Conditioning on $E$, the distribution of the hitting time of $\mathcal{S}$ is identical to that of $\mathcal{S}'$.
	On the other hand, conditioning on the complement of $E$, at any point during the game, and for any non-negative integer $\ell$, the probability that both vertices $u$ and $w$ have degree at least $\ell$ when playing according to $\mathcal{S}$ is at most the probability that both vertices have degree at least $\ell$ when playing according to $\mathcal{S}'$. In fact, if $u \neq w$, then after round $i+1$ the probability that $\min \{d_G(u), d_G(w)\} \geq \ell$ when playing according to $\mathcal{S}$ is equal to the probability that $\min \{d_G(u), d_G(w)\} \geq \ell+1$ when playing according to $\mathcal{S}'$. Hence, $\mathcal{S}'$ dominates $\mathcal{S}$ in this case as well.
\end{proof}

In the remainder of this subsection, a graph $H$ with vertex set $\{u_1, \ldots, u_t\}$ is said to be \textit{degree-dominated} by a graph $G$ with vertex set $\{v_1, \ldots, v_t\}$ if there exists a permutation $\pi : [t] \to [t]$ such that $d_H(u_i) \leq d_G(v_{\pi(i)})$ for every $1 \leq i \leq t$. 
For the proof of Lemma \ref{lem:S_min}, we will need the following fact, which can be straightforwardly proved by induction.
\begin{observation}
	\label{obs:degree_dominate}
	Suppose that $H$ and $G$ are graphs on the same number of vertices, such that $G$ degree-dominates $H$. Let $X_H$ (respectively, $X_G$) be the random variable representing the number of rounds required for Builder to reach minimum degree $k$ when following the strategy $\mathcal{S}_{\min}$, starting from the graph $H$ (respectively, $G$). Then $X_G$ dominates $X_H$. The same holds for $\mathcal S^{\dagger}_{\min}$.
\end{observation}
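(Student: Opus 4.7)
The plan is to establish Observation~\ref{obs:degree_dominate} by induction on the number of rounds, constructing a coupling of the process on $H$ and the process on $G$ under which the invariant ``$\tilde G_m$ degree-dominates $\tilde H_m$'' is preserved at every step $m$. The base case is exactly the hypothesis of the observation, so the entire content lies in the inductive step: given that $\tilde G_m$ degree-dominates $\tilde H_m$, produce a joint distribution for round $m+1$ whose marginals are the correct one-step distributions and under which the invariant still holds.

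The coupling I would use is the following. Because sorted degree-dominance is equivalent to the existence of a rank-matching bijection, fix a permutation $\pi_m$ sending the $i$-th smallest-degree vertex of $\tilde H_m$ to the $i$-th smallest-degree vertex of $\tilde G_m$. Couple the uniform offerings so that if $v$ is offered to the $H$-process, then $\pi_m(v)$ is offered to the $G$-process; this is valid since $\pi_m$ is a bijection. The uniform choices from $M_{\tilde H_m}$ and $M_{\tilde G_m}$ (the min-degree sets) are then made independently. Once the invariant is established for every $m$, the pathwise inequality $\delta(\tilde G_m) \geq \delta(\tilde H_m)$ implies that $\tilde G_m$ reaches minimum degree $k$ no later than $\tilde H_m$ does, yielding the stochastic dominance of $X_G$ over $X_H$ as required.

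The hard part will be verifying, under the independent min-degree choices, that the updated graphs still satisfy sorted dominance. I would phrase this using the conjugate counts $C_H(j) := |\{i : h_i \geq j\}|$ and $C_G(j) := |\{i : g_i \geq j\}|$, noting that sorted dominance is equivalent to $C_H(j) \leq C_G(j)$ for every $j \geq 1$. Each round increments $C_H$ by $+1$ at one or two specific values of $j$, depending on whether the edge is a loop and on the degrees of the offered and chosen vertices; the same is true for $C_G$. The only level at which the $H$-process can strictly out-increment the $G$-process is $j = \delta(\tilde H_m)+1$, and this occurs precisely when the offered vertex lies in $M_{\tilde H_m}$ but its $\pi_m$-image does not lie in $M_{\tilde G_m}$. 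In every such configuration a short combinatorial check gives $|M_{\tilde H_m}| > |M_{\tilde G_m}|$, so the preexisting slack $C_G(\delta(\tilde H_m)+1) - C_H(\delta(\tilde H_m)+1)$ strictly absorbs the excess increment. A routine case analysis over loop/non-loop rounds and values of $j$ handles all remaining levels and closes the induction. The argument for $\mathcal S^\dagger_{\min}$ is identical, once one respects its convention that the offered vertex's degree is incremented before the min-degree set is computed.
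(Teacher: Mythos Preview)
Your inductive/coupling framework is the right one, and it is presumably what the paper has in mind when it says the observation ``can be straightforwardly proved by induction.'' However, the specific coupling you propose does \emph{not} preserve the invariant, and the gap is precisely in the sentence ``the uniform choices from $M_{\tilde H_m}$ and $M_{\tilde G_m}$ are then made independently.'' Because $\mathcal S_{\min}$ allows loops, an independent draw for the second vertex lets the two processes disagree on whether a loop occurs, and this can destroy sorted dominance in one step.

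Concretely, take $\tilde H_m = \tilde G_m$ with sorted degree sequence $(0,0,5)$, so $|M_{\tilde H_m}|=|M_{\tilde G_m}|=2$. If rank $1$ is offered in both (your coupling forces this), then with probability $\tfrac12\cdot\tfrac12=\tfrac14$ the $H$-process selects its offered vertex (a loop, giving $(0,2,5)$) while the $G$-process selects the other minimum-degree vertex (no loop, giving $(1,1,5)$). Now $h'_2=2>1=g'_2$, so the invariant fails. Your subsequent conjugate-count argument cannot rescue this: at level $j=\delta_H+2$ the $H$-process increments by $1$ and the $G$-process by $0$, and there is zero slack since $C_H(2)=C_G(2)=1$ beforehand. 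The sentence ``the only level at which the $H$-process can strictly out-increment the $G$-process is $j=\delta(\tilde H_m)+1$'' is also inaccurate in other (non-fatal) ways---for instance, when $h_r<g_r$ with $h_r>\delta_H$, level $h_r+1$ is out-incremented too, although there the rank-matching does guarantee strict slack.

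The fix is to couple the second choice as well, not leave it independent. One clean way: draw a single uniform $U\in[0,1]$ and declare the round a loop in each process exactly when $U<1/|M|$ for that process's current minimum-degree set. This forces the loops to be nested (whenever both offered vertices are in their respective minimum-degree sets, the process with the larger $|M|$ loops only if the other one does), and then the remaining case analysis---including the loop/non-loop asymmetries that survive---does go through level by level. The same repair handles $\mathcal S^{\dagger}_{\min}$.
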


\begin{proof}[Proof of Lemma \ref{lem:S_min}]
	A round of the game played according to $\mathcal{S}_{\min}$ is considered to be a \textit{failure} if both of the following conditions are met.
	\begin{enumerate} [(a)]
		\item [$(a)$] The vertex $v$ offered in this round is of minimum degree.
		\item [$(b)$] $\mathcal S_{\min}$ instructs Builder to connect $v$ to itself in this round.
	\end{enumerate}
	We first show that the number of failures when playing according to $\mathcal{S}_{\min}$ is $o(n)$ w.h.p., and then we show how this implies the statement of the lemma.
	
	Let $N$ denote the number of vertices of minimum degree in Builder's graph immediately before a given round begins. The probability that both (a) and (b) above hold is $\frac{N}{n} \cdot \frac{1}{N} = \frac{1}{n}$. Since $\mathcal{S}_{\min}$ always reaches minimum degree $k$ after at most $kn$ rounds, the expected number of failures is bounded by $kn / n = k$, and thus, by Markov's inequality, the total number of failures is w.h.p. $o(n)$.
	
	Suppose now that we play a round of $\mathcal S_{\min}$ and of $\mathcal S^{\dagger}_{\min}$ in parallel, starting from the same graph $G$, and using the same source of randomness. Let $v$ be the vertex offered in this round. Observe that, conditioning on the event that this round is \textit{not} a failure for $\mathcal{S}_{\min}$, the distribution on the second vertex chosen in this round according to $\mathcal S_{\min}$ is identical to that of $\mathcal S^\dagger_{\min}$. On the other hand, suppose that this round is a failure, and one runs another round of $\mathcal S_{\min}$, increasing by one the degree of some vertex $u \neq v$ that was a minimum-degree vertex of $G$. The resulting graph, obtained by running two rounds of $\mathcal{S}_{\min}$ starting from $G$, degree-dominates any graph generated by one round of $\mathcal S^{\dagger}_{\min}$ starting from $G$. It thus follows by Observation~\ref{obs:degree_dominate} and the fact that w.h.p. the number of failures is $o(n)$, that $\mathcal{S}_{\min}$ $(o(n), o(1))$-dominates $\mathcal{S}^{\dagger}_{\min}$.
\end{proof}

\subsection{Online $k$-connectivity}
%

In this subsection we prove Theorem~\ref{thm:OnkCon}. Note that the lower bound is an immediate corollary of Theorem~\ref{th::OnMinDegk}. For the upper bound, we utilize a slightly modified min-degree process.
Consider a multigraph process $\{G^*_{\min}(n,m)\}_{m \geq 0}$, which is defined exactly like the process $\{G'_{\min}(n,m)\}_{m \geq 0}$ except that, instead of choosing the first vertex among all vertices of minimum degree, we choose it among all vertices with the smallest number of distinct neighbors. These two processes are identical as long as there are no multiple edges. Once there are multiple edges, the neighborhood of an endpoint of such edges could have minimum size while the degree of that endpoint could be strictly larger than the minimum degree. 
Consider the strategy $\mathcal{S}^*_{\min}$ defined as follows.

\textbf{Strategy $\mathcal S^{*}_{\min}$:}	Whenever Builder is offered some vertex $v$, he connects it to a vertex $u$, chosen u.a.r. among all vertices of $[n] \setminus \{v\}$ that have the smallest number of distinct neighbors.  

The following result is analogous to Proposition \ref{pro:G'min}, and its proof (which is omitted) is essentially the same as that of Proposition \ref{pro:G'min}. 
\begin{proposition}\label{pro:G*min}
	If $m = o \left(n^2 \right)$, then the strategy $\mathcal S^*_{min}$ is such that $H \sim G^*_{\min}(n,m)$ and $G \sim \mathcal S^*_{min}(n, (1+o(1))m)$ can be coupled
	in such a way that w.h.p.\ $H \subseteq G$.
\end{proposition}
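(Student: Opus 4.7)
The plan is to mirror the proof of Proposition~\ref{pro:G'min} essentially verbatim, working with the obvious loop-allowing version of $\mathcal S^{*}_{min}$ (Builder picks the second endpoint u.a.r.\ from all vertices of $[n]$ having the smallest number of distinct neighbors, possibly including the offered vertex $v$ itself). Builder follows this strategy, and a round is called a \emph{failure} whenever the vertex he picks coincides with $v$ (so that the claimed edge would be a loop); such failed rounds are simply discarded in the coupling, and the process is run until $m$ non-failure rounds have been recorded.

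First I would argue that this coupling is valid: conditional on the current multigraph state, a single non-failure round of $\mathcal S^{*}_{min}$ produces the same edge distribution as one step of $G^{*}_{\min}(n,\cdot)$. This is the exchangeability argument used in Proposition~\ref{pro:G''min}: in both processes the unordered endpoint pair has one vertex uniform on $[n]$ and the other uniform on the current set of vertices achieving the smallest $|N(\cdot)|$, so conditioning on the two endpoints being distinct recovers exactly the edge distribution used in the definition of $G^{*}_{\min}$ (whose rounds produce no loops by construction). Iterating, the sequence of non-failure edges of $\mathcal S^{*}_{min}$ can be coupled with the edges of $G^{*}_{\min}(n,m)$ one by one so that they coincide.

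Next I would bound the failure count. Since the offered vertex is uniform on $[n]$ and independent of the current multigraph, the probability of a failure in any single round is exactly $1/n$, regardless of how Builder picks the second endpoint. Hence over $m(1+o(1))$ rounds the expected number of failures is $(1+o(1))m/n = o(m)$, and Markov's inequality yields that w.h.p.\ the number of failures is $o(m)$. Consequently, the multigraph $G \sim \mathcal S^{*}_{min}(n,(1+o(1))m)$ contains at least $m$ non-failure edges w.h.p., and under the coupling these $m$ edges form the edges of $H \sim G^{*}_{\min}(n,m)$, establishing $H \subseteq G$ as required.

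The main subtlety is the per-round distributional identity in the first step; in particular, one must take some care with the tie-breaking inside the minimum-$|N(\cdot)|$ set when the offered vertex itself lies in that set. This is, however, handled verbatim by the argument used for $\mathcal S_{min}$ and $G''_{\min}$ in the proof of Proposition~\ref{pro:G''min}, so no ideas beyond those of Propositions~\ref{pro:G''min} and~\ref{pro:G'min} are needed.
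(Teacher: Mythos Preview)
Your proof is correct and is precisely the approach the paper intends: the paper omits the proof with the remark that it is ``essentially the same as that of Proposition~\ref{pro:G'min}'', and your argument---pass to the loop-allowing variant of $\mathcal S^{*}_{\min}$, declare loop rounds to be failures, invoke the role-swapping argument of Proposition~\ref{pro:G''min} to identify non-failure rounds with steps of $G^{*}_{\min}$, and bound the $o(m)$ failure count via Markov---is exactly that argument transported to the starred process. One small remark: loops do not change $|N(\cdot)|$, so a failure round leaves the set of minimum-$|N|$ vertices unchanged and the per-round coupling with $G^{*}_{\min}$ stays synchronized; you might state this explicitly, since it is the one place where the starred version differs from the degree-based $\mathcal S_{\min}$.
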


For every positive integer $k$, let $H^*_k = H^*_k(n)$ denote the hitting time for the property that every vertex of $G^*_{\min}(n,m)$ has at least $k$ distinct neighbors, i.e.
$$
H^*_k = \min \{m : |N(u)| \geq k \textrm{ for every } u \in V(G^*_{min}(n,m))\}
$$
Furthermore, for every positive integer $k$, let $H_k$ denote the hitting time for the property that the minimum degree of $G_{\min}(n,m)$ is $k$, i.e.
$$
H_k = \min \{m : \delta(G_{\min}(n,m)) \geq k\}.
$$ 
We stress that $G_{\min}$ refers here to the min-degree process that does not allow multiple edges (as opposed to $G'_{\min}$) or loops. Recall the notion of $(\ell,\varepsilon)$-domination from Subsection~\ref{subsec:online_min_deg_k}.

\begin{lemma}\label{lem::G*}
Fix a positive integer $k$. Then $H^{*}_k$ $(\log n, o(1))$-dominates $H_k$.
\end{lemma}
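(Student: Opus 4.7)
My plan is to couple $G^*_{\min}$ with $G_{\min}$ so that $G^*_{\min}$ needs at most $\log n$ extra rounds with high probability. The key structural observation is that the underlying simple graph $\bar G^*_m$ of the multigraph $G^*_{\min}(n,m)$ evolves exactly according to the $G_{\min}$ rule, but only during rounds that do not create a multi-edge. Call a round a \emph{success} if the second vertex is not already a neighbor of the first in the current multigraph, and a \emph{failure} otherwise. In $G^*_{\min}$ the first vertex is uniform on the set of vertices with minimum distinct-neighbor count, which equals the minimum degree of $\bar G^*_m$; conditioned on success, the second vertex is uniform on the non-neighbors of the first in $\bar G^*_m$. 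Hence, if $T_j$ denotes the round of the $j$-th success in $G^*_{\min}$, then $(\bar G^*_{T_j})_{j \geq 1}$ is distributed exactly as $(G_{\min}(n,j))_{j \geq 1}$. Under this coupling $H^*_k = T_{H_k}$, so $H^*_k - H_k = F_{H^*_k}$, where $F_m$ denotes the number of failures among the first $m$ rounds of $G^*_{\min}$.

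I then bound $F_m$ in expectation over the relevant time scale. After $r$ rounds, the multigraph has at most $r$ distinct edges, so the sum of distinct-neighbor counts is at most $2r$ and their minimum is at most $2r/n$. Therefore the failure probability in round $r$ is at most $\tfrac{2r/n}{n-1}$. Fixing a deterministic truncation time $T = (h_k+1)n$ and summing yields $\E(F_T) \leq \sum_{r=1}^T \tfrac{2r}{n(n-1)} = O(1)$, and Markov's inequality gives $\Pr(F_T \geq \log n) = o(1)$.

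To conclude, the standard Wormald analysis of the min-degree process (see the discussion in Subsection~\ref{subsec:online_min_deg_k}) gives $H_k \leq (h_k + o(1))n$ with high probability. On the high-probability event $\{H_k \leq (h_k+o(1))n\} \cap \{F_T < \log n\}$ we have $H_k + F_T < T$, so the $H_k$-th success of $G^*_{\min}$ occurs by round $T$, forcing $H^*_k \leq T$ and hence $F_{H^*_k} \leq F_T < \log n$. Thus $\Pr(H^*_k \leq H_k + \log n) \geq 1 - o(1)$, which directly yields $\Pr(H^*_k \leq t + \log n) \geq \Pr(H_k \leq t) - o(1)$ for every $t$. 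The only delicate point is justifying the coupling in spite of the self-referential relation $H^*_k = H_k + F_{H^*_k}$; truncating at the deterministic time $T$ sidesteps this cleanly.
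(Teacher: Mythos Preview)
Your proof is correct and follows essentially the same coupling argument as the paper: identify the successful (non-multi-edge) rounds of $G^*_{\min}$ with the rounds of $G_{\min}$, so that $H^*_k = H_k + F_{H^*_k}$, and bound the number of failures by Markov. The only notable difference is that you truncate at $T=(h_k+1)n$ and invoke Wormald's asymptotic $H_k \le (h_k+o(1))n$, whereas the paper uses the elementary deterministic bound $H_k \le kn$ (truncating at $kn+\log n$) and the simpler per-round failure estimate $k/(n-1)$, thereby avoiding any appeal to the differential-equations analysis.
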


\begin{remark}
	The $\log n$ term was chosen arbitrarily; it can be replaced with any function that tends to infinity with $n$.
\end{remark}

\begin{proof}
Consider a round of $G^{*}_{\min}$ to be a \textit{failure} if a multiple edge is chosen in this round.
For any multigraph $G$, let $simp(G)$ denote the simple graph $H$ so that $uv$ is an edge of $H$ if and only if $uv$ appears at least once in $G$.
It suffices to prove the following two statements. 

\begin{enumerate}
	\item For any multigraph $G$, the following two edge distributions are identical.
	\begin{enumerate}
		\item The edge distribution of a single round of $G_{\min}$ starting from $simp(G)$.
		\item The edge distribution of a single round of $G^*_{\min}$ starting from $G$, conditioned on the event that this round is not a failure.
	\end{enumerate}
	\item For any fixed $k$, the number of failures of $G^{*}_{\min}$ until the point that any vertex in the generated graph has at least $k$ distinct neighbors  is w.h.p.\ at most $\log n$.
\end{enumerate} 
We start by proving the first statement. Note that a vertex $v$ has minimum degree in $simp(G)$ if and only if it has a minimum number of distinct neighbors in $G$; let $V_{\min}$ denote the set of all such vertices. Let $N = |V_{\min}|$ and let $\delta$ denote the degree of the vertices of $V_{\min}$ in $simp(G)$. The probability of an edge $uv$ to be chosen according to each of the distributions is $|\{u, v\} \cap  V_{\min}| / N (n-1-\delta)$, so the distributions are indeed identical.

To prove the second statement, observe that the probability for a multiple edge to be chosen in a single round of $G^*_{\min}$ is bounded from above by $k / (n-1)$, and thus the total expected number of failure rounds in $G^{*}_{\min}(n, m)$ is $O(k^2)$ as long as $m = O(kn)$. Putting, say, $m = nk+\log{n}$, we deduce by Markov's inequality that w.h.p.\ (for fixed $k$) the total number of failures in $G^{*}_{\min}(n, m)$ is bounded from above by $\log n$. Conditioning on this event, we know that the generated graph had at least $m - \log{n} = kn$ successful rounds among its first $m$ rounds. Hence, it must already hold that any vertex has at least $k$ distinct neighbors at this point, concluding the proof.
\end{proof}

Building on Proposition~\ref{pro:G*min} and Lemma~\ref{lem::G*}, Theorem~\ref{thm:OnkCon} is now an immediate corollary of the following result which strengthens a result from~\cite{KKRL}; our proof builds on their method.

\begin{theorem} \label{th::k-con}
	Let $k \geq 3$ be a fixed integer. Then w.h.p. $G^*_{\min}(n, \alpha n)$ is $k$-connected if $\alpha > h_k$ and is not $k$-connected if $\alpha < h_k$.
\end{theorem}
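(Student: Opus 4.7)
The plan is to reduce the two halves of Theorem~\ref{th::k-con} to corresponding statements about the standard min-degree process $G_{\min}$, by extending the coupling used in the proof of Lemma~\ref{lem::G*}. Run $G^*_{\min}$ and $G_{\min}$ in parallel on a common source of randomness: in every \emph{success} round of $G^*_{\min}$ (one in which a genuinely new edge is claimed), $G_{\min}$ takes the same step, and in every \emph{failure} round (one in which a multi-edge is created), $G_{\min}$ idles. Statement~(1) in the proof of Lemma~\ref{lem::G*} shows that the conditional one-round edge distributions agree, so under this coupling $simp(G^*_{\min}(n,m)) = G_{\min}(n, m - F_m)$, where $F_m$ counts failures in the first $m$ rounds; the computation in Statement~(2) of that proof gives $F_{\alpha n} = O(\log n)$ w.h.p.

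For the lower bound $\alpha < h_k$, a $k$-connected graph on more than $k$ vertices has minimum degree at least $k$, so it suffices to show that w.h.p.\ some vertex of $simp(G^*_{\min}(n, \alpha n))$ has degree less than $k$. Under the coupling this is equivalent to $G_{\min}(n, \alpha n - F_{\alpha n})$ having minimum degree less than $k$. Since $\alpha n - F_{\alpha n} \leq \alpha n < h_k n$, Wormald's result $H_k = (h_k + o(1))n$ w.h.p.\ yields that $G_{\min}(n, \alpha n)$ has minimum degree less than $k$ w.h.p., and hence so does $G_{\min}(n, \alpha n - F_{\alpha n})$ by monotonicity of the degree sequence in the number of rounds.

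For the upper bound $\alpha > h_k$, the coupling gives $simp(G^*_{\min}(n, \alpha n)) = G_{\min}(n, \alpha n - F_{\alpha n})$ with $\alpha n - F_{\alpha n} \geq (h_k + (\alpha - h_k)/2) n$ w.h.p., so it suffices to prove that for every fixed $\alpha' > h_k$, $G_{\min}(n, \alpha' n)$ is w.h.p.\ $k$-connected. This is the appropriate strengthening of the result in~\cite{KKRL}, and I would follow their method: take a union bound over all potential cuts $(A, S, B)$ with $|S| \leq k-1$, $|A| \leq |B|$, $|A|, |B| \geq 1$, and no $G_{\min}$-edges between $A$ and $B$. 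The analysis splits into two regimes. When $|A|$ is bounded, each vertex of $A$ has all of its $\geq k$ distinct neighbors inside the small set $A \cup S$, a rare event that can be quantified via the degree-sequence profile of $G_{\min}$ governed by Wormald's differential equations. When $|A|$ grows with $n$ but $|A| \leq n/2$, the expected number of crossing $G_{\min}$-edges is linear in $|A|$, and martingale concentration (Theorem~\ref{th::Azuma}) combined with the union bound rules out any such cut.

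The main obstacle is the small-$|A|$ case: because the edges of $G_{\min}$ are highly correlated through the min-degree rule (each step biases toward low-degree endpoints), one cannot directly invoke $G(n,m)$-style small-cut bounds. Following~\cite{KKRL}, the workaround is to track the joint evolution of the degree sequence and the edge set through a configuration-model-style description of $G_{\min}$ conditional on its degrees, and to derive the small-cut estimate from the resulting conditional near-uniformity. Once this no-small-cut statement is established for $G_{\min}$, the coupling transfers it to $G^*_{\min}$, yielding the theorem.
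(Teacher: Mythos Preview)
Your reduction via the coupling of Lemma~\ref{lem::G*} is valid: $k$-connectivity of $G^*_{\min}(n,\alpha n)$ is equivalent to that of its simplification, which under the coupling equals $G_{\min}(n, \alpha n - F_{\alpha n})$ with $F_{\alpha n} = O(\log n)$ w.h.p. But this buys you nothing, because the statement you then need---that $G_{\min}(n,\alpha' n)$ is w.h.p.\ $k$-connected for every fixed $\alpha' > h_k$---is exactly the substantive part of the theorem, and it is not in~\cite{KKRL} (that paper treats connectivity, not $k$-connectivity; the present result is the strengthening). So all the work remains, and your proposal only sketches it.

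The sketch has real gaps. For the large-$|A|$ regime, invoking Theorem~\ref{th::Azuma} is problematic on two counts. First, the min-degree rule is history-dependent: perturbing the random choice in one round can alter which vertices have minimum degree in every later round, so the bounded-differences hypothesis for the number of $A$--$B$ edges does not obviously hold with constant $c_k$. Second, even granting $c_k = O(1)$, Azuma over $\Theta(n)$ rounds yields only $\Pr(e_G(A,B)=0) \leq \exp(-\Theta(|A|^2/n))$, which is not small enough to beat the $\binom{n}{|A|}$ choices when $|A| = o(n)$; the union bound fails throughout the intermediate range. For small $|A|$, the ``configuration-model conditional near-uniformity'' description is too vague to constitute a plan: edges of $G_{\min}$ are far from conditionally uniform given the degree sequence, and you have not identified what the actual small-set estimate should be or how to prove it.

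The paper avoids both issues by arguing directly in $G^*_{\min}$, with no detour through $G_{\min}$. For small sets it proves a structural lemma (Lemma~\ref{lem::smallSets}): w.h.p.\ every $A$ with $|A| \leq 101k^2$ spans at most $|A|$ edges. The proof partitions the rounds into $M = 200k^2$ \emph{types} according to the dyadic scale $n^{(t-1)/M} < |\{\text{vertices of min-size neighbourhood}\}| \leq n^{t/M}$, bounds the number of rounds of each type by $O(n^{t/M})$, and union-bounds over all ways the $|A|+1$ offending internal edges could distribute among types. Combined with every vertex having at least $k$ distinct neighbours, this rules out small cuts by a degree count. For $|A|$ growing, the paper uses no martingale at all: it classifies each round as \emph{bad}, \emph{good}, or \emph{great} according to whether the first (min-neighbourhood) vertex lies in $R$, $T$, or $S$, notes there are at most $4k^2$ bad rounds (since $|R|=k-1$ and $\delta \leq 2k$ throughout), and bounds $\Pr(E_G(S,T)=\emptyset)$ as an explicit product of per-round conditional probabilities, summed over the possible number of great rounds. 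That product structure, not concentration, is what makes the union bound over all $(S,R,T)$ go through across the full range of $|S|$.
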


\begin{remark}
	It follows from the results in~\cite{KKRL} that $k$ cannot be taken to be smaller than $3$ in Theorem~\ref{th::k-con}.
\end{remark}

In the proof of Theorem~\ref{th::k-con} we will make use of the following auxiliary lemma.
\begin{lemma} \label{lem::smallSets}
	Let $k$ be a positive integer and let $G \sim G^*_{\min}(n, \alpha n)$, where $\alpha \leq k$. Then w.h.p. $e_G(A) \leq |A|$ for every set $A \subseteq [n]$ of size $1 \leq |A| \leq 101 k^2$.
\end{lemma}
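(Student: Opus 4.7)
The plan is to prove the lemma by a first-moment / union-bound argument over subsets $A \subseteq [n]$ of each fixed size $1 \leq s \leq 101 k^2$. Fix such an $A$. An edge added in round $i$ of $G^*_{\min}$ falls inside $A$ only if both the min-neighbor endpoint $u_i$ and the uniform endpoint $v_i \in [n]\setminus\{u_i\}$ belong to $A$. Conditioning on the whole sequence $(u_1, \ldots, u_m)$ with $m = \alpha n$, the events $\{v_i \in A\}$ are mutually independent and each has conditional probability $(s-1)/(n-1)$ when $u_i \in A$. Thus
\begin{equation*}
e_G(A) \mid (u_1, \ldots, u_m) \;\sim\; \Bin\!\left(M_A,\, \frac{s-1}{n-1}\right), \qquad M_A := |\{i : u_i \in A\}|,
\end{equation*}
and the standard bound $\Pr(\Bin(N,p) \geq t) \leq \binom{N}{t} p^t$ reduces the task to a sufficiently strong upper bound on $M_A$.

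The key step is to prove that w.h.p. $M_A \leq C s \log n$ for some $C = C(k)$. Since each round with $u_i \in A$ contributes $1$ to the degree of some vertex in $A$, we have $M_A \leq \sum_{u \in A} d_G(u) \leq s \cdot \Delta(G)$, so it suffices to show $\Delta(G) \leq C \log n$ w.h.p. Writing $d_G(u) = F_u + S_u$, where $F_u$ (resp.\ $S_u$) counts the rounds with $u_i = u$ (resp.\ $v_i = u$), we have that $S_u$ is stochastically dominated by $\Bin(m, 1/(n-1))$, so $\max_u S_u \leq \log n$ w.h.p.\ by Corollary~\ref{cor::maxLoad}. For $F_u$: whenever $u$ is the first vertex, $|N(u)|$ must equal the current minimum distinct-neighbor count $\mu$, and since the sum of distinct-degrees is at most $2m$, this minimum is always at most $2\alpha \leq 2k$. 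Hence $u$ can appear as the first vertex more than $2k+1$ times only through multi-edges, and each multi-edge occurs with probability at most $2k/(n-1)$ per first-round of $u$; a short geometric-tail computation plus a union bound over $u$ and over $\mu \in \{0, 1, \ldots, 2k\}$ then gives $\max_u F_u = O(\log n)$ w.h.p. Thus $\Delta(G) \leq C \log n$ and $M_A \leq C s \log n$ w.h.p.

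Conditioning on this event,
\begin{equation*}
\Pr\!\left(e_G(A) \geq s+1\right) \leq \binom{C s \log n}{s+1} \left(\frac{s}{n-1}\right)^{s+1} \leq \left(\frac{C' s \log n}{n}\right)^{s+1},
\end{equation*}
and a union bound over the $\binom{n}{s} \leq (en/s)^s$ subsets of size $s$ yields
\begin{equation*}
\Pr\!\left(\exists A \subseteq [n] : |A| = s,\; e_G(A) \geq s+1\right) = O\!\left(\frac{s\,(\log n)^{s+1}}{n}\right) + o(1) = o(1),
\end{equation*}
the additive $o(1)$ accounting for the failure of $\Delta(G) \leq C \log n$. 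Summing over the $O(k^2)$ values of $s$ preserves the $o(1)$ bound, completing the proof. The main obstacle is the upper bound on $\Delta(G)$, and more specifically on $F_u$: this requires a careful tracking of how the minimum distinct-neighbor count evolves throughout $G^*_{\min}$, together with a concentration argument that controls the rare multi-edge contributions.
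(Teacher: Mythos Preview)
Your route differs from the paper's. The paper partitions the $\alpha n$ rounds into $M=200k^2$ ``types'' according to the current number of minimum-neighbourhood vertices (type $t$ meaning this count lies in $(n^{(t-1)/M},n^{t/M}]$), shows there are at most $3kn^{t/M}$ rounds of each type, and bounds the probability that \emph{both} endpoints of a given type-$t$ round land in $A$ by $(|A|/n^{(t-1)/M})(|A|/n)$; it then sums over all ways to distribute $|A|+1$ bad edges among the types. Your approach --- bound $M_A$ via $\Delta(G)$, then treat the second endpoints as binomial --- is more direct and avoids the type decomposition entirely.

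There is, however, a real gap. You assert that, conditionally on the full sequence $(u_1,\ldots,u_m)$, the events $\{v_i\in A\}$ are mutually independent with probability $(s-1)/(n-1)$. This is false in $G^*_{\min}$: the first vertex $u_{i+1}$ is chosen among the minimum-neighbourhood vertices of the graph after round $i$, which depends on $v_1,\ldots,v_i$; conditioning on all the $u_j$'s therefore changes the law of each $v_i$. Concretely, take $n=4$, $u_1=1$, $u_2=2$: after round~1 both $1$ and $v_1$ have a neighbour, so $u_2$ is uniform on $\{2,3,4\}\setminus\{v_1\}$, whence $\Pr(v_1=2\mid u_1=1,u_2=2)=0$, not $1/3$. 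So the claimed law $e_G(A)\mid(u_1,\ldots,u_m)\sim\Bin(M_A,(s-1)/(n-1))$ is not valid.

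The fix is a sequential argument. What \emph{is} true is that $\Pr(v_i\in A\mid \text{history up to and including }u_i)=(s-1)/(n-1)$ whenever $u_i\in A$. If $\tau_1<\tau_2<\cdots$ are the successive rounds with $u_{\tau_j}\in A$, then the indicators $Y_j=\mathbf{1}[v_{\tau_j}\in A]$, padded by fresh independent Bernoulli$((s-1)/(n-1))$ variables for $j>M_A$, form an i.i.d.\ sequence; on $\{M_A\le N\}$ this yields $\Pr(e_G(A)\ge s+1)\le\Pr\bigl(\Bin(N,(s-1)/(n-1))\ge s+1\bigr)$, after which your union-bound calculation goes through unchanged. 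Your bound on $F_u$ is essentially correct (indeed the geometric-tail argument gives the stronger $\max_u F_u=O(k)$ w.h.p., so the $\log n$ factor is unnecessary), but note that its justification requires the same sequential care rather than naive conditioning on the $u_i$'s.
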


\begin{proof}
	Fix some integer $k \geq 1$ and let $M = 200 k^2$. For every $1 \leq t \leq M$, a round of the process is said to be of \emph{type $t$} if at the start of that round, the number of vertices whose neighborhood is of minimum size is larger than $n^{(t-1)/M}$ and is at most $n^{t/M}$. Since $\alpha \leq k$, it follows that $\delta(G) \leq 2k$ holds throughout the process. Moreover, it follows by the description of the process that in every round we increase the number of neighbors of some vertex whose neighborhood is of minimum size or we choose a multiple edge. Since, by the proof of Lemma~\ref{lem::G*}, w.h.p. there are at most $\log n$ rounds in which we choose a multiple edge, it follows that w.h.p., throughout the process there are at most $2k n^{t/M} + \log n \leq 3k n^{t/M}$ rounds of type $t$ for every $1 \leq t \leq M$.
	
	Fix an integer $1 \leq i \leq 101 k^2$ and a set $A \subseteq [n]$ of size $i$. We would like to bound from above the probability that $e_G(A) \geq i+1$. Let $S_i = \left\{(s_1, \ldots, s_M) \in {\mathbb N}^M : \sum_{t=1}^M s_t = i+1 \right\}$. For every $\bar{s} = (s_1, \ldots, s_M) \in S_i$, let $p_{i,\bar{s}}$ denote the probability that, for every $1 \leq t \leq M$, at least $s_t$ edges with both endpoints in $A$ were claimed during rounds of type $t$. Then
	\begin{eqnarray*}
		Pr(e_G(A) \geq i+1) &\leq& \sum_{\bar{s} \in S_i} p_{i, \bar{s}} \\
		&\leq& \sum_{(s_1, \ldots, s_M) \in S_i} \prod_{t=1}^M \binom{3k n^{t/M}}{s_t} \left(\frac{i}{n^{(t-1)/M}} \right)^{s_t} \left(\frac{i}{n} \right)^{s_t}\\
		&\leq& \sum_{(s_1, \ldots, s_M) \in S_i} c_k n^{\sum_{t=1}^M (t s_t/M - (t-1) s_t/M - s_t)} = c'_k n^{(i+1)(1/M - 1)},
	\end{eqnarray*}
	where $c_k$ and $c'_k$ are appropriate constants, depending on $k$ but not on $n$.
	
	A union bound over all relevant choices of $A$ then shows that the probability that there exists a set $A \subseteq [n]$ such that $|A| = i$ for some $1 \leq i \leq 101 k^2$ and $e_G(A) \geq i+1$ is at most
	$$
	\sum_{i=1}^{101 k^2} \binom{n}{i} c'_k n^{(i+1)(1/M - 1)} \leq \sum_{i=1}^{101 k^2} c'_k n^{i + (i+1)(1/M - 1)} \leq c'_k\cdot 101 k^2 \cdot n^{-1} \cdot n^{(101 k^2 + 1)/(200 k^2)} = o(1),
	$$
	where the last inequality holds since $M = 200 k^2$ and $i \leq 101 k^2$.
\end{proof}

We are now in a position to prove Theorem~\ref{th::k-con}.

\begin{proof}
	If $G$ is $k$-connected, then, in particular, every vertex of $G$ has at least $k$ distinct neighbors. It follows by the definitions of $H^*_k$ and $h_k$ that w.h.p. $G^*_{\min}(n, \alpha n)$ is not $k$-connected if $\alpha < h_k$. Assume then that $\alpha > h_k$. Since $k$-connectivity is a monotone increasing property and $h_k < k$, we can assume that $\alpha \leq k$. In order to prove that $G \sim G^*_{\min}(n, \alpha n)$ is w.h.p. $k$-connected, we will show that the probability that there exist pairwise disjoint sets $S, R$, and $T$ such that $[n] = S \cup R \cup T$, $|R| = k-1$, $1 \leq |S| \leq |T|$, and $E_G(S,T) = \emptyset$, tends to $0$ as $n$ tends to infinity. Since, by the definitions of $H^*_k$ and $h_k$, w.h.p. every vertex of $G$ has at least $k$ distinct neighbors, we can restrict our attention to the case $|S| \geq 2$.
	
	Fix a triple $S, R, T$ as above, where $|S| = s$ for some $2 \leq s \leq 100 k^2$. Let $A = S \cup R$ and observe that $|A| = s + k - 1 \leq 101 k^2$. It follows by Lemma~\ref{lem::smallSets} that w.h.p. $e_G(A) \leq s + k - 1$ and $e_G(S) \leq s$. Since, moreover, w.h.p. every vertex of $G$ has at least $k$ distinct neighbors, if $E_G(S,T) = \emptyset$, then $e_G(A) \geq k s - s$. Since $k \geq 3$ and $s \geq 2$, this is a contradiction unless $k = 3$ and $s = 2$. In the latter case $|A| = 4$ and $e_G(A) \geq 5$ which again contradicts Lemma~\ref{lem::smallSets}.
	
	Now, fix a triple $S, R, T$ as above, where $100 k^2 \leq |S| \leq (n-k+1)/2$. A round of the process is said to be \emph{bad} if, in that round, the first vertex is chosen from $R$, \emph{good} if it is chosen from $T$, and \emph{great} if it is chosen from $S$. It suffices to prove that the probability that no edges between $S$ and $T$ were claimed in any round which is not bad is $o(1)$. Since $\alpha \leq k$, it follows that $\delta(G) \leq 2k$ holds throughout the process. Since, moreover, the first vertex chosen in every round has the least number of distinct neighbors and there are at most two edges between any pair of vertices by Lemma~\ref{lem::smallSets}, there can be at most $2 \cdot 2 k |R| \leq 4k^2$ bad rounds. Let $X_S$ be the random variable which counts the number of great rounds. Since $\delta(G) \geq k$ holds w.h.p. at the end of the process, and there are at most $4k^2$ bad rounds, if $E_G(S,T) = \emptyset$, then $X_S \geq k |S|/2 - 2k^2$. Therefore, the probability that $S, R, T$ as above exist is at most
	
	\begin{eqnarray} \label{eq::largeSets}
	&& \sum_{s = 100 k^2}^{(n-k+1)/2} \sum_{i = k s/2 - 2 k^2}^{\alpha n} \binom{n}{s} \binom{n}{k-1} Pr(X_S = i) Pr(E_G(S,T) = \emptyset \; | \; X_S = i) \nonumber \\
	&\leq& n^{k-1} \sum_{s = 100 k^2}^{(n-k+1)/2} \sum_{i = k s/2 - 2k^2}^{\alpha n} \binom{n}{s} \left(\frac{s+k-2}{n-1} \right)^i \left(\frac{n-s-1}{n-1} \right)^{\alpha n - i - 4 k^2}
	\end{eqnarray}
	
	It follows from Stirling's formula that $\binom{n}{s} \leq \frac{n^n}{s^s (n-s)^{(n-s)}}$ for every $n$ and $s$. Hence, a straightforward calculation shows that
	$$
	\binom{n}{s} \left(\frac{s+k-2}{n-1} \right)^s \left(\frac{n-s-1}{n-1} \right)^{\alpha n - s - 4 k^2} \leq e^k \left(1 - \frac{s}{n}\right)^{(\alpha - 1) n - 4 k^2} < e^{-s/2},
	$$
	where the last inequality holds since $h_k \geq h_3 > 1.7$ holds for every $k \geq 3$. Therefore~\eqref{eq::largeSets} can be bounded from above by
	\begin{eqnarray*}
		&& n^{k-1} \sum_{s = 100 k^2}^{(n-k+1)/2} \sum_{i = k s/2 - 2k^2 - s}^{\alpha n - s} \left(\frac{s+k-2}{n-s-1} \right)^i e^{-s/2} \\
		&\leq& n^{k-1} \sum_{s = 100 k^2}^{\log^2 n} \sum_{i = k s/2 - 2k^2 - s}^{\alpha n - s} \left(\frac{s+k-2}{n-s-1} \right)^i + \alpha n^k \sum_{s = \log^2 n}^{(n-k+1)/2} e^{-s/2} \\
		&\leq& \alpha n^k \log^2 n \left(\frac{2 \log^2 n}{n} \right)^{48 k^2} + \alpha n^{k+1} e^{- \log^2 n/2} = o(1),
	\end{eqnarray*}
	where the last inequality holds for every $k \geq 3$.
\end{proof}


\section{Concluding remarks and open problems}\label{sec::openprob}

In this paper we have initiated the research on the semi-random graph process, leading to many intriguing open questions. We mention just a few of the possible directions for future research.

\medskip

\noindent \textbf{Online fixed graph.} Let $H$ be an arbitrary fixed graph and let $d$ be the degeneracy of $H$. It is proved in Theorem~\ref{th::fixedGraphUpperBound} that w.h.p. $\tau({\mathcal P}_H, n) = O(n^{(d-1)/d})$. For the special case $H = K_{d+1}$, a lower bound of the same order of magnitude is proved in Theorem~\ref{thm:lowerCliqueOnLine}. We believe that such a bound holds for any graph $H$.

\begin{conjecture} \label{conj::fixedGraph}
	Let $H$ be an arbitrary fixed graph and let $d$ be the degeneracy of $H$. Then w.h.p. $\tau({\mathcal P}_H, n) = \Theta(n^{(d-1)/d})$.
\end{conjecture}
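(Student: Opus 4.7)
The upper bound $\tau(\mathcal{P}_H, n) \leq g(n) \cdot n^{(d-1)/d}$ is provided by Theorem~\ref{th::fixedGraphUpperBound}, so the task is to prove the matching lower bound: for any strategy $\mathcal{S}$ and any $m = f(n) \cdot n^{(d-1)/d}$ with $f(n) \to 0$, w.h.p.\ $G_m \sim \mathcal{S}(n, m)$ does not contain $H$. My plan is to generalize the counting argument from the proof of Theorem~\ref{thm:lowerCliqueOnLine}. Using the definition of degeneracy, I would fix a subgraph $H^* \subseteq H$ with $\delta(H^*) = d$ (so $v(H^*) \geq d+1$); every copy of $H$ in $G_m$ contains a copy of $H^*$, hence $Z_{m, H} \leq Z_{m, H^*}$ and it suffices to show $\mathbb{E}(Z_{m, H^*}^{\mathcal{S}}) = o(1)$ at this scale.

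I would then attempt to establish, by induction on $v(H^*)$, a bound of the form $\mathbb{E}(Z_{m, H^*}^{\mathcal{S}}) \leq C_{H^*} \cdot m^a/n^b$ with $a(d-1) = bd$, which is $o(1)$ whenever $m = o(n^{(d-1)/d})$. The base case $v(H^*) = d+1$, where $H^* = K_{d+1}$, is precisely Claim~\ref{claim:NumberOfCopiesOfKk} with $\ell = d+1$, giving $\mathbb{E}(Z_{m, K_{d+1}}^{\mathcal{S}}) \leq C m^d/n^{d-1}$. For the inductive step, fix a vertex $v^* \in V(H^*)$ of degree exactly $d$ in $H^*$, and set $H^{*-} := H^* - v^*$. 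Generalizing the key observation from the clique proof, each new copy of $H^*$ created in round $r$ must arise from a configuration already present in $G_{r-1}$ consisting of a copy of $H^{*-}$ together with a ``proto-$v^*$'' vertex already adjacent to $d-1$ of the specific vertices playing the roles of $N_{H^*}(v^*)$, with the offered vertex $v_r$ playing one of the $v(H^*)$ roles. Summing over choices of $v^*$'s image and using that $v_r$ is uniform in $[n]$, I would obtain a recursion for $\mathbb{E}(Y_{r, H^*}^{\mathcal{S}})$ in terms of rooted counts derived from $H^{*-}$ in $G_{r-1}$, which can be summed over $r$ to close the induction.

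The main obstacle is that, in contrast to cliques, $H^{*-}$ generally does not inherit nice properties from $H^*$: its minimum degree may drop strictly below $d$, and even its degeneracy may decrease, so the inductive hypothesis does not apply cleanly to $H^{*-}$ itself. Moreover, the count of ``rooted configurations'' (where a proto-$v^*$ is already adjacent to $d-1$ specific images) can be substantially larger than the count of $H^{*-}$-copies, so simply bounding the former by the latter is too lossy. A likely resolution is to strengthen the inductive statement to bound simultaneously a \emph{family} of rooted counts, one for each subgraph $J \subseteq H^*$ together with a specification of which vertices of $J$ play the role of ``already-attached'' neighbors of a future $v^*$, and prove the corresponding bounds in parallel. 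Additional technical difficulty arises from the cases where the last edge of the new $H^*$-copy is not incident to $v^*$, forcing a more intricate case analysis; controlling the resulting terms uniformly over Builder's strategies may require invoking the Talagrand-type concentration inequality from Theorem~\ref{th::Talagrand} to convert expected-count bounds into w.h.p.\ statements, since rare large deviations in intermediate rounds could in principle yield atypically many copies of $H^*$ later.
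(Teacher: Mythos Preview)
The statement you are attempting to prove is labelled \textbf{Conjecture} in the paper, and it is presented in Section~\ref{sec::openprob} as an open problem: the paper explicitly says that the upper bound follows from Theorem~\ref{th::fixedGraphUpperBound}, that the matching lower bound is known only for cliques via Theorem~\ref{thm:lowerCliqueOnLine}, and that the general lower bound is merely \emph{believed} to hold. There is therefore no proof in the paper to compare your proposal against.

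Your proposal is a reasonable research plan rather than a proof, and you are candid about this. A few comments. First, your reduction to a subgraph $H^*\subseteq H$ with $\delta(H^*)=d$ and the intention to show $\mathbb{E}(Z_{m,H^*}^{\mathcal S})=o(1)$ via Markov is sound, and your identification of the base case with Claim~\ref{claim:NumberOfCopiesOfKk} is correct. Second, you have put your finger on exactly the obstruction that prevents the clique argument from extending: deleting a minimum-degree vertex from $H^*$ need not preserve either $\delta(\cdot)=d$ or even degeneracy $d$, so there is no ready-made inductive hypothesis for $H^{*-}$. Your suggested fix, tracking a whole family of rooted subgraph counts simultaneously, is a natural thing to try, but nothing in your outline indicates why the resulting system of inequalities closes with the exponents you need; this is precisely the missing idea, and as far as the paper is concerned the problem is open.

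One minor point: your last paragraph about Talagrand-type concentration is likely a red herring. The clique argument in the paper works entirely at the level of expectations, using only the tower property $\mathbb{E}[Y_{m,\ell+1}^{\mathcal S}]\le (\ell/n)\,\mathbb{E}[Z_{m-1,\ell}^{\mathcal S}]$ and linearity; no concentration for intermediate rounds is required, because a bound on $\mathbb{E}[Z_{m,\ell}]$ that is uniform over strategies already absorbs any ``atypical'' histories. If a version of your rooted-count induction can be made to work, it should similarly close in expectation, with Markov's inequality applied once at the end.
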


Note that the assertion of Conjecture~\ref{conj::fixedGraph} is trivially true for $d=1$, that is, when $H$ is a forest.

\medskip

\noindent \textbf{Perfect matching.} Recall that $\mathcal{PM}$ denotes the property of containing a perfect matching. It follows from our results that w.h.p.
\begin{equation} \label{eq::PM}
(\ln 2 + o(1)) n = \tau'(\mathcal{PM}, n) \leq \tau(\mathcal{PM}, n) \leq (1 + 2/e + o(1)) n,
\end{equation}
where the last inequality holds by Corollary~\ref{cor:KoutBip}. On the other hand, the equality is a simple corollary of Proposition~\ref{prop:FindingH}. Indeed,  let $H$ be a matching consisting of $n/2$ edges (for convenience, we will assume that $n$ is even). Observe that in every orientation of $H$ there are precisely $n/2$ vertices of out-degree $1$ and precisely $n/2$ vertices of out-degree $0$. Therefore, using the notation of Subsection \ref{sec::BallsBins}, we have $\tau'(\mathcal{PM}, n) = m(H) = \min \{m : X_0^m \leq n/2\}$. The required equality now follows by~\eqref{e:exactly-k} and since $X_0^m$ is concentrated around its mean by Lemma~\ref{lem::Balls}. We believe that neither the lower nor the upper bound in~\eqref{eq::PM} is tight. It would be interesting to close or at least reduce the gap between these two bounds.

\medskip

\noindent \textbf{Hamilton cycle.} Recall that $\ch$ denotes the property of admitting a Hamilton cycle. Similarly to the case of a perfect matching, it is not hard to show that w.h.p.
\begin{equation} \label{eq::HAMoff}
\tau'(\ch, n) = (\alpha + o(1)) n,
\end{equation}
where $\alpha = 1.14619...$ is the unique positive real number satisfying $1 = (2 + \alpha) e^{-\alpha}$ (it is straightforward to verify that there exists a unique positive real number which satisfies this equation). The equality~\eqref{eq::HAMoff} is a simple corollary of Proposition~\ref{prop:FindingH}. Indeed, let $H$ be a cycle of length $n$. Observe that in every orientation of $H$, there are $r$ vertices of out-degree $1$, $(n-r)/2$ vertices of out-degree $2$, and $(n-r)/2$ vertices of out-degree $0$, for some integer $0 \leq r \leq n$. Therefore, a necessary and sufficient condition for Builder to construct a Hamilton cycle is $\sum_{k=2}^n X_k^m \geq (n - X_1^m)/2$ which is equivalent to $n - 2 X_0^m - X_1^m \geq 0$. Setting $m = (c + o(1)) n$ and using~\eqref{e:exactly-k} and Lemma~\ref{lem::Balls}, we conclude that w.h.p. the aforementioned necessary and sufficient condition holds for $c$ which satisfies $0 = (1 - 2 e^{-c} - c e^{-c}) n$ as required.

For the online Hamilton cycle game we have the following bounds (which hold w.h.p.):
\begin{equation} \label{eq::HAMon}
(h_2 - o(1)) n \leq \tau(\ch, n) \leq (3 + o(1)) n,
\end{equation}
where $h_2 = \log 2 + \log (1 + \log 2) \approx 1.219736$ (as explicitly calculated in \cite{Wormald2}). Indeed, the lower bound holds by Theorem~\ref{th::OnMinDegk} since minimum degree at least $2$ is a trivial necessary condition for Hamiltonicity. On the other hand, the upper bound holds by Corollary~\ref{cor:Kout} and the well known result asserting that a random graph generated by the $3$-out model is w.h.p.\@ Hamiltonian~\cite{bohman2009hamilton}. It would be interesting to close or at least reduce the gap between the lower and upper bounds in~\eqref{eq::HAMon}.

\medskip

\noindent \textbf{Bounded degree graphs.} Let $H$ be a graph with vertex set $[n]$ and with bounded maximum degree. Observe that $\tau({\mathcal P}_H, n) \leq (1 + o(1)) n \log n$. Indeed, Builder can construct $H$ as follows. For every $1 \leq i \leq n$, let $j_1, \ldots, j_{d_i}$ be an arbitrary ordering of the neighbors of $i$ in $H$. In each round, if Builder is offered some vertex $i$ for the $r$th time for some $1 \leq r \leq d_i$, then Builder claims the edge $i j_r$, otherwise he claims an arbitrary edge. Using this strategy, it is evident that $\tau({\mathcal P}_H, n) \leq m$, where $m$ is the smallest integer such that, during the first $m$ rounds, every $1 \leq i \leq n$ is offered at least $\Delta(H)$ times. Using~\eqref{e:exactly-k} and Lemma~\ref{lem::Balls}, a straightforward calculation shows that $m = (1 + o(1)) n \log n$.

Note that $\tau({\mathcal P}, n) = O(n)$ holds for every property ${\mathcal P}$ we considered in this paper. This observation has led Noga Alon to ask us the following question.
\begin{question} \label{q::linearUpperBound}
	Is there a graph $H$ on $n$ vertices with bounded maximum degree such that $\tau({\mathcal P}_H, n) = \omega(n)$?
\end{question}

\medskip

Another possible direction of future research is the study of natural variations of our process. These include the following:

\medskip

\noindent \textbf{A digraph process.} Consider the same semi-random graph process, except that whenever Builder claims an edge, he must orient it from the vertex he was offered to the vertex he chose. His goal now is to build a digraph which satisfies some predetermined increasing property as soon as possible. Consider for example the aim of building a directed Hamilton cycle. Let $H$ be an undirected cycle on $n$ vertices and let $D_1$ and $D_2$ be orientations of $H$ such that in $D_1$ the out-degree of every vertex is $1$ and in $D_2$ the out-degree of every vertex is either $0$ or $2$ (assume for convenience that $n$ is even). It is not hard to see that $\tau'({\mathcal P}_{D_1}, n) = \tau({\mathcal P}_{D_1}, n) = (1 + o(1)) n \log n$. Both equalities follow from the fact that Builder can construct $D_1$ as soon as every vertex is offered at least once but not sooner. Indeed, if some vertex is never offered, then its out-degree in Builder's graph will be $0$ (both in the offline and in the online games). On the other hand, in the online game, Builder can play as follows: for every $1 \leq i \leq n$, the first time vertex $i$ is offered, Builder connects it to vertex $(i \mod n) + 1$; in any other round he plays arbitrarily. This proves the first equality. Using~\eqref{e:exactly-k} and Lemma~\ref{lem::Balls}, a straightforward calculation proves the second equality. In light of~\eqref{eq::HAMoff} and~\eqref{eq::HAMon}, this shows that, in general, the digraph process behaves very differently than the graph process. Now, consider constructing $D_2$. Similarly to the case of an undirected Hamilton cycle, one can show that $\tau'({\mathcal P}_{D_2}, n) = \Theta(n)$. This shows that (at least in the offline case) the digraph process in which Builder aims to build some digraph $D$ might really depend on $D$ and not just on its underlying undirected graph. As for the online case, the following question seems plausible.
\begin{question} \label{q::DirectedHamCycle}
	Is it true that for every $\varepsilon > 0$ there exist constants $C$ and $n_0$ such that $\tau({\mathcal P}_D, n) \leq C n$ holds for every $n \geq n_0$ and every orientation $D$ of the $n$-cycle $H$ in which the number of vertices of out-degree $0$ is at least $\varepsilon n$?
\end{question}

\medskip

\noindent \textbf{Non-uniform sampling.} In the process we studied in this paper, the vertex Builder was offered in every round was chosen u.a.r. One could also study a similar process where the vertices Builder is offered are chosen according to some other probability distribution (which can differ between rounds). For example, consider the following random process which was studied in~\cite{RW1, RW2, RW3}. For a positive integer $d$, the random $d$-process $\{G_i\}_{i=0}^N$, where $N = \lfloor n d/2 \rfloor$, is defined as follows. $G_0$ is the empty graph on $n$ vertices and, for every $i \geq 0$, $G_{i+1} = G_i \cup e_{i+1}$, where $e_{i+1} = uv$ is chosen u.a.r.~from the set of all non-edges of $G_i$ for which $\max \{d_{G_i}(u), d_{G_i}(v)\} < d$. While we cannot use our process as is to approximate the random $d$-process, we can easily do so if the vertices Builder is offered in every round are chosen u.a.r.~from the set of all vertices whose degree is strictly smaller than $d$. Another example of a random graph process we can approximate by offering Builder vertices according to an appropriate probability distribution is the min-min random graph process~\cite{CK}. One can of course consider various probability distributions for the aforementioned digraph process as well.

\medskip

\noindent \textbf{Delaying increasing graph properties.} In this paper, we studied $\tau({\mathcal P}, n)$ (and, similarly, $\tau'({\mathcal P}, n)$ for the offline game) which is the \textbf{smallest} number of rounds in the online game Builder needs in order to build a graph on $n$ vertices which satisfies the increasing graph property ${\mathcal P}$. Instead, we can have Builder try to avoid satisfying ${\mathcal P}$ for as long as possible. Formally, we define $T({\mathcal P}, n)$ (and, similarly, $T'({\mathcal P}, n)$ for the offline game) to be the \textbf{largest} number of rounds in the online game for which Builder can maintain a graph on $n$ vertices which does not satisfy the increasing graph property ${\mathcal P}$. Note that in order for this to make sense, we can no longer allow Builder to create loops or multiple edges. Consider for example the property ${\mathcal P}_t$ of containing a connected  component on at least $t$ vertices. It is trivial that $\tau({\mathcal P}_t, n) = \tau'({\mathcal P}_t, n) = t-1$. On the other hand, studying $T({\mathcal P}_t, n)$ (and to some extent also $T'({\mathcal P}_t, n)$) seems to have merit, especially with relation to the phase transition in the size of the largest component. Another interesting example is the property ${\mathcal P}_{\Delta}$ of being triangle-free. The problem of determining $T({\mathcal P}_{\Delta}, n)$ and $T'({\mathcal P}_{\Delta}, n)$ is related to classical problems in extremal graph theory and to other restricted random graph processes (see, e.g.,~\cite{Bohman, BK, FGM}).


\section*{Acknowledgements}
We would like to thank the anonymous referees for helpful comments. We would like to thank Peleg Michaeli for suggesting the study of this model and to thank Noga Alon and Michael Krivelevich for helpful discussions. The research on this project was initiated during a joint research workshop of Tel Aviv University and the Free University of Berlin on Positional Games and Extremal Combinatorics, held in Berlin in 2016; we would like to thank both institutions for their support.
%
%

\end{document}